\newtheorem{theorem}{Theorem}[section]
\newtheorem{thm}{Theorem}
\newtheorem{lemma}[theorem]{Lemma}
\newtheorem{proposition}[theorem]{Proposition}
\theoremstyle{definition}
\newtheorem{definition}[theorem]{Definition}
\newtheorem{problem}[theorem]{Problem}
\theoremstyle{remark}
\newtheorem{remark}[theorem]{Remark}
\numberwithin{equation} {section} %\linespread{1.1}
\begin{document}

\title[Fefferman-Stein decomposition and micro-local quantities]
 {Fefferman-Stein decomposition for $Q$-spaces  and micro-local quantities}

\thanks{The first author is supported by NSFC No. 11271209, 11571261.}

\thanks{The second author is supported by Research Grant of University of Macau
UL017/08-Y3/MAT/QT01/FST, Macao Science and Technology Fund FDCT/056/2010/A3.}
\thanks{Pengtao Li is the corresponding author and is  supported by NSFC No.11171203,  11201280;
New Teacher's Fund for Doctor Stations, Ministry of Education, China, No.20114402120003.}
%?

\keywords{Generalized Hardy spaces; micro-local structure;
wavelet characterization; atoms; Fefferman-Stein type
decomposition.}

 \subjclass[2000]{Primary 42B20; 76D03; 42B35; 46E30}

%    Information for first author
\maketitle

\centerline{\bf Qixiang Yang, } \centerline{{ the corresponding
author,}} \centerline{School of Mathematics and Statics, Wuhan
University, } \centerline{Wuhan, 430072, China.} \centerline{E-mail:
qxyang@whu.edu.cn}

\centerline{\bf Tao Qian,}
\centerline{ Department of Mathematics,
 University of Macau, }
  \centerline
 { Macao, China SAR.}
\centerline{ E-mail: fsttq@umac.mo}

\centerline{\bf  Pengtao Li,}
\centerline{{ the corresponding
author,}}
\centerline{ Department of Mathematics,
  Shantou University,}
  \centerline
 {Shantou, 515063, China.}
\centerline{ E-mail: ptli@stu.edu.cn}

\dedicatory{}
\date{}

\newpage
\centerline{\large \bf Fefferman-Stein decomposition for $Q$-spaces  and }
\centerline{\large \bf micro-local quantities}

\vspace{0.5cm}

\centerline{Qixiang Yang, Tao Qian and Pengtao Li }

\begin{abstract}
In this paper,  we consider the Fefferman-Stein decomposition of $Q_{\alpha}(\mathbb{R}^{n})$
and give an affirmative answer to an open problem posed by
M. Essen, S. Janson, L. Peng and J. Xiao in 2000.
One of our main methods is to study the structure of the predual space of
$Q_{\alpha}(\mathbb{R}^{n})$ by the micro-local quantities.
This result indicates that the norm of the predual space of
$Q_{\alpha}(\mathbb{R}^{n})$ depends on the micro-local structure
in a self-correlation way.

\end{abstract}
\tableofcontents
\pagenumbering{arabic}

\section{Introduction}
In this paper, we give a wavelet characterization of the predual of
$Q$-space $Q_{\alpha}(\mathbb{R}^{n})$ without using a family of Borel
measures. By this result, we obtain a Fefferman-Stein type
decomposition of $Q_{\alpha}(\mathbb{R}^{n})$.

Let $R_0$ be the unit operator and  $R_{i}, i=1,\cdots,n,$ be the Riesz transforms, respectively.
In 1972, in the celebrated paper \cite{FS}, C. Fefferman and E. M.
Stein proved the following result.
\begin{thm}\label{th1}
{\rm (\cite{U}, Theorem B)} If $f(x)\in BMO(\mathbb{R}^{n})$, then
there exist $g_{0}(x),\cdots$, $g_{n}(x)\in L^{\infty}(\mathbb{R}^{n})$
such that, modulo constants,  $f=\sum\limits_{j=0}^{n}R_{i}g_{i}$
and $\sum\limits^{n}_{j=0}\|g_{j}\|_{L^{\infty}}\leq C\|f\|_{BMO}$.
\end{thm}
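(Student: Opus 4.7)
The plan is to proceed by duality, exploiting that $BMO$ is the dual of the real Hardy space $H^{1}(\mathbb{R}^{n})$ and that $H^{1}$ admits a Riesz-transform characterization. Accepting these two deep ingredients as prerequisites, the decomposition will fall out of a Hahn--Banach extension combined with the anti-self-adjointness $R_{i}^{\ast}=-R_{i}$.

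\emph{Step 1: Riesz-transform description of $H^{1}$.} I would invoke the characterization that $h\in H^{1}(\mathbb{R}^{n})$ iff $h\in L^{1}$ and $R_{i}h\in L^{1}$ for $i=1,\ldots,n$, with the equivalent norm
\[
\|h\|_{H^{1}}\ \approx\ \|h\|_{L^{1}}+\sum_{i=1}^{n}\|R_{i}h\|_{L^{1}}.
\]
Let $T: H^{1}\to L^{1}(\mathbb{R}^{n};\mathbb{R}^{n+1})$ be the embedding $h\mapsto (h,R_{1}h,\ldots,R_{n}h)$, and set $V=T(H^{1})\subset (L^{1})^{n+1}$, which by the above is a closed subspace isomorphic to $H^{1}$.

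\emph{Step 2: A functional on $V$.} By $H^{1}$--$BMO$ duality, for every $h$ in a suitable dense subspace of $H^{1}$ the pairing $\langle f,h\rangle$ is well defined modulo constants in $f$ and satisfies $|\langle f,h\rangle|\le C\|f\|_{BMO}\|h\|_{H^{1}}$. I would therefore define
\[
\Lambda:V\to\mathbb{R},\qquad \Lambda\bigl(h,R_{1}h,\ldots,R_{n}h\bigr)=\langle f,h\rangle,
\]
which, when $V$ carries the ambient $(L^{1})^{n+1}$ norm, is a bounded linear functional of norm at most $C\|f\|_{BMO}$.

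\emph{Step 3: Hahn--Banach and read-off.} I would extend $\Lambda$ to all of $(L^{1})^{n+1}$ preserving its norm. Using $(L^{1})^{\ast}=L^{\infty}$, the extension must take the form $(h_{0},\ldots,h_{n})\mapsto \sum_{i=0}^{n}\int g_{i}h_{i}$ for some $g_{i}\in L^{\infty}(\mathbb{R}^{n})$ with $\sum_{i=0}^{n}\|g_{i}\|_{L^{\infty}}\le C\|f\|_{BMO}$. Restricting back to $V$ and invoking $R_{i}^{\ast}=-R_{i}$, I would obtain
\[
\int f\,h\ =\ \int g_{0}\,h\ -\ \sum_{i=1}^{n}\int (R_{i}g_{i})\,h
\]
for every $h$ in a dense subclass of $H^{1}$, which is equivalent, modulo constants, to the identity $f=g_{0}-\sum_{i=1}^{n}R_{i}g_{i}$. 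Absorbing signs into the $g_{i}$ gives the stated form $f=\sum_{j=0}^{n}R_{j}g_{j}$ with the claimed norm bound.

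\emph{Main obstacle.} The Hahn--Banach step is routine once the framework is in place; the genuine difficulty is packaged entirely into the two inputs used in Steps~1 and~2, namely (i) the Riesz-transform characterization of $H^{1}$ and (ii) $H^{1}$--$BMO$ duality. Both are central theorems of the same Fefferman--Stein paper and are substantially deeper than the functional-analytic argument above; in particular, (i) relies on the $L^{p}$-boundedness of the Riesz transforms and on real-variable Hardy space theory. If these were not available, the decomposition would have to be built more directly, for instance by solving the equations $R_{i}g_{i}=\Pi_{i}f$ via a projection onto the range of each $R_{i}$, which is the route bypassed by the duality approach.
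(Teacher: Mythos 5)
Your proposal is correct; it is the standard Hahn--Banach/duality argument from the original Fefferman--Stein paper, and it is precisely the template the present paper uses to prove its own Fefferman--Stein decomposition for $Q_{\alpha}(\mathbb{R}^{n})$ (Theorem \ref{th2}), with $H^{1}$, $L^{1}$, $L^{\infty}$ replaced by $P^{\alpha}$, $L^{1,\alpha}$, $L^{\infty,\alpha}$ and the Riesz-transform characterization of $H^{1}$ replaced by the paper's Theorem \ref{thL1}. Note that the paper does not reprove Theorem \ref{th1} itself but only quotes it from \cite{FS}.
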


The importance of Fefferman-Stein decomposition exists in two aspects.\\
On one hand, there is a close relation between the
$\bar{\partial}-$equation and Fefferman-Stein decomposition. On the
other hand, this decomposition helps us understand better the structure of
$BMO$ space and the distance between $L^{\infty}(\mathbb{R}^{n})$ and $BMO(\mathbb{R}^{n})$.
Due to the above two points, Fefferman-Stein decomposition of BMO
space has been studied extensively by many mathematicians since
1970s. We refer the readers to Jones \cite{J1, J2} and
Uchiyama \cite{U} for further information.
In latest decades, Fefferman-Stein decomposition is also extended to other function spaces,
for example, $BLO$, $C^0$ and $VMO$. We refer the reader to \cite{CR}, \cite{S} and the reference therein.

As an analogy of $BMO$, $Q$-spaces own the similar structure and many
common properties. It is natural to seek a Fefferman-Stien type
decomposition of $Q$-spaces. For the $Q$-spaces on unit disk, Nicolau-Xiao \cite{NX} obtained a
decomposition of $Q_{p}(\partial\mathbb{D})$ similar to Fefferman
and Stein's result of $BMO(\partial\mathbb{D})$ (see \cite{NX},
Theorem 1.2). On Euclidean space $\mathbb{R}^{n}$, Essen-Janson-Peng-Xiao \cite{EJPX}
introduced  $Q_{\alpha}(\mathbb{R}^{n})$ as a generalization of $Q_{p}(\partial\mathbb{D})$.
For $\alpha\in(-\infty,\infty)$,
$Q_{\alpha}(\mathbb{R}^{n})$ are defined as the spaces of all measurable
functions with
\begin{equation}\label{eq:111}\sup\limits_{I} |I|^{\frac{2\alpha}{n}-1}
\int_{I}\int_{I}\frac{|f(x)-f(y)|^{2}}{|x-y|^{n+2\alpha}}dxdy<\infty,
\end{equation}
where the supremum is taken over all cubes $I$ with
the edges parallel to the coordinate. They
studied  $Q_{\alpha}(\mathbb{R}^{n})$ systemically and
list Fefferman-Stein decomposition of $Q_{\alpha}(\mathbb{R}^{n})$
as one of the open problems.
\begin{problem}\label{p1}
(\cite{EJPX}, Problem 8.3) For $n\geq 2$ and  $\alpha\in(0,1)$.
Give a Fefferman-Stein type decomposition of
$Q_{\alpha}(\mathbb{R}^{n})$.
\end{problem}
In this paper, we will give an affirmative answer to this open
problem. Generally speaking, there are two methods to obtain the
Fefferman-Stein decomposition of $BMO(\mathbb{R}^{n})$. In \cite{FS}, C. Fefferman
and E. M. Stein split $BMO$ functions by an extension theorem, i.e.,
the Hahn-Banach theorem in functional analysis. In \cite{U}, A.
Uchiyama gave a constructive proof of Theorem \ref{th1}. In this
paper, using wavelets, we study the micro-local structure of
$P^{\alpha}(\mathbb{R}^{n})$, the predual of
$Q_{\alpha}(\mathbb{R}^{n})$. By this way, we
obtain a Fefferman-Stein type decomposition of
$Q_{\alpha}(\mathbb{R}^{n})$.

For the Fefferman-Stein decomposition of
$Q_{\alpha}(\mathbb{R}^{n})$, the difficulties exist in two aspects.

(1) For a function $f$ in $P^{\alpha}(\mathbb{R}^{n}), 0<\alpha<\frac{n}{2}$, the higher
frequency party and the lower frequency party make different
contributions to the norm $\|f\|_{P^{\alpha}}$. That is to say,
$P^{\alpha}(\mathbb{R}^{n})$ have special micro-local structure.

(2) For any function $f(x)$, Riesz transforms may cause  a
perturbation on all the range of its frequencies. To obtain Fefferman-Stein
type decomposition, we need to control the range of the
perturbation.

To overcome the above two difficulties, on one hand,
we analyze the micro-local structure of functions in
$P^{\alpha}(\mathbb{R}^{n})$.
Such micro-local structure could help
us get a wavelet characterization of $P^{\alpha}(\mathbb{R}^{n})$
without involving a group of Borel measures.
On the other hand, we use classical Meyer wavelets
to control the range of the perturbation. See also Remark \ref{open}.

In Section 2, we will give the definition of wavelet basis
$\{\Phi^{\epsilon}_{j,k}(x)\}_{(\epsilon, j, k)\in\Lambda_n}$.
It is well-known that a function $g(x)$ can be written as a sum
$$g(x)=\sum\limits_{j} g_{j}(x), \text{ where }
g_{j}(x)=: Q_{j}g(x)= \sum\limits_{\epsilon, k}
g^{\epsilon}_{j,k}\Phi^{\epsilon}_{j,k}(x).$$
Let $g(x)$ be a
function in Besov spaces or Triebel-Lizorkin spaces. Roughly speaking, the norms of
$g(x)$ can be determined by the $l^{p}(L^q)$-norm or $L^p(l^{q})$-norm of $\{g_j(x)\}$, respectively. See \cite{Me} and \cite{Tr}. For
$g(x) \in P^{\alpha}(\mathbb{R}^{n}), 0<\alpha<\frac{n}{2}$, the situation becomes
complicated and we can not use the above ideas. In Section 3, we
introduce the micro-local quantities with levels to study the
structure of functions in $P^{\alpha}(\mathbb{R}^{n})$.

Let $g(x)=\sum\limits_{\epsilon,j,k}g^{\epsilon}_{j,k}\Phi^{\epsilon}_{j,k}(x)
\in P^{\alpha}(\mathbb{R}^{n})$. For any dyadic cube $Q$, we take the function
$$g_Q(x)=\sum\limits_{Q_{j,k}\subset Q}g^{\epsilon}_{j,k}\Phi^{\epsilon}_{j,k}(x)$$
as the localization of $g(x)$ on $Q$.
Then we restrict the range of frequency by limiting the number of $j$.
In fact, we consider the function
\begin{equation}\label{ml} g_{t,Q}(x)=:\sum_{Q_{j,k}\subset Q:
-\log_{2}|Q|\leq nj\leq
nt-\log_{2}|Q|}g^{\epsilon}_{j,k}\Phi^{\epsilon}_{j,k}(x).
\end{equation}
We obtain three micro-local quantities
about $g_{t,Q}(x)$ by using some basic results in analysis.
See Section 3 for details.

In Section 4, applying the above micro-local analysis of functions
in $P^{\alpha}(\mathbb{R}^{n})$, we give a new wavelet characterization
of this space. As the predual of $Q_{\alpha}(\mathbb{R}^{n})$,
$P^{\alpha}(\mathbb{R}^{n})$ has been studied by many authors.
One method is to define the predual space $P^{\alpha}(\mathbb{R}^{n})$
by a family of Borel measures. See Kalita \cite{K},
Wu-Xie \cite{WX} and Yuan-Sickel-Yang \cite{YSY}.
This idea can result in wavelet characterization of the predual space;
but the predual space with the induced norm is a pseudo-Banach space.
Dafni-Xiao \cite{DX} used a method of Hausdorff capacity to study $P^{\alpha}(\mathbb{R}^{n})$.
L. Peng and Q. Yang defined $P^{\alpha}(\mathbb{R}^{n})$ by the atoms ( see \cite{PY} and \cite{Yang}).
By these methods, $P^{\alpha}(\mathbb{R}^{n})$ are Banach spaces;
but these authors did not consider the wavelet characterization of $P^{\alpha}(\mathbb{R}^{n})$.

Compared with the former results of \cite{K}, \cite{PY}, \cite{Yang} and \cite{YSY},
our result owns the following advantage. Let $f$ be a function in $ P^{\alpha}(\mathbb{R}^{n})$.
Our wavelet characterization indicates clearly that different frequencies
exert different influences  to the $P^{\alpha}$-norm of $f$.  See Theorem
\ref{th4} and Theorem \ref{cha}.
To obtain the Fefferman-Stein type decomposition of
$Q_{\alpha}(\mathbb{R}^{n})$, we need such
a wavelet characterization of $P^{\alpha}(\mathbb{R}^{n})$.

In Section 5, by
the characterization obtained in Section 4 and  the properties of
Meyer wavelets and Daubechies wavelets, we characterize
$P^{\alpha}(\mathbb{R}^{n})$ associated with Riesz transforms. See Theorem \ref{thL1}.
Applying this result and the duality between $P^{\alpha}(\mathbb{R}^{n})$
and $Q_{\alpha}(\mathbb{R}^{n})$, we obtain a Fefferman-Stein type decomposition of
$Q_{\alpha}(\mathbb{R}^{n})$.

We need to point out that our definition of
$Q_{\alpha}(\mathbb{R}^{n})$ is different from the one introduced in
\cite{EJPX}. For non-trivial spaces, the scope of $\alpha$ in \cite{EJPX}
is restricted to $(0,\min\{1,\frac{n}{2}\})$, while the scope in our definition
can be relaxed to $(0,\frac{n}{2})$. More importantly,
when $\alpha\in(0,\min\{1,\frac{n}{2}\})$,
our definition is equivalent to the one in \cite{EJPX}. So the
Fefferman-Stein type decomposition obtained in Section 5 gives a
positive answer to the open problem proposed in \cite{EJPX}.

The rest of this paper is organized as follows. In Section 2, we
state some preliminary notations and lemmas which will be used in
the sequel. In Section 3, we study the micro-local quantities for
the functions in $P^{\alpha}(\mathbb{R}^{n})$. In Section 4, we
obtain a wavelet characterization of $P^{\alpha}(\mathbb{R}^{n})$ by
the micro-local result in Section 3. The Fefferman-Stein type
decomposition of $Q_{\alpha}(\mathbb{R}^{n})$ is obtained in Section
5. In Section 6, we prove the Riesz transformation theorem for space
$P^{\alpha}(\mathbb{R}^{n})$.

\section{Preliminaries}
In this section, we present preliminaries on wavelets,
functions and operators which will be used in this paper.

\subsection{Wavelets and classic function spaces}
In this paper, we use real-valued tensor product wavelets; which can
be regular Daubechies wavelets or classical Meyer wavelets. For simplicity, we denote
by $0$ the zero vector in $\mathbb{R}^{n}$. Let
$E_{n}=\{0,1\}^{n}\backslash \{0\}$. Let $\Phi^{0}(x)$ and
$\Phi^{\epsilon}(x), \epsilon\in E_{n},$ be the scale function and
the wavelet functions, respectively.  If $\Phi^{\epsilon}(x)$ is a
Daubechies wavelet, we assume there exist $m>8n$ and $M\in \mathbb{N}$ such that
\begin{itemize}
\item[(1)] $\forall \epsilon\in\{0,1\}^n$,
$\Phi^{\epsilon}(x)\in C^{m}_{0}([-2^{M},2^{M}]^{n})$;
\item[(2)] $\forall \epsilon\in E_{n}$, $\Phi^{\epsilon}(x)$ has the vanishing moments
up to the order $m-1$.
\end{itemize} For further information about wavelets,
we refer the reader to \cite{Me}, \cite{Woj} and \cite{Yang}.

For $j\in\mathbb{Z}$ and $k=(k_{1}, k_{2},\dots, k_{n})\in\mathbb{Z}^{n}$,
we denote by $Q_{j,k}$ the dyadic cube
$\prod\limits^{n}_{s=1}[2^{-j}k_{s}, 2^{-j}(k_{s}+1)]$ and by $$\Omega=\Big\{Q_{j,k},j\in \mathbb{Z},k\in \mathbb{Z}^{n}\Big\},$$ respectively.
Let $$\Lambda_{n}=\Big\{(\epsilon,j,k),\epsilon\in E_{n},j\in \mathbb{Z},k\in
\mathbb{Z}^{n}\Big\}.$$ For $\epsilon\in \{0,1\}^{n}$, $j\in \mathbb{Z}, k\in \mathbb{Z}^{n}$,
we denote $\Phi^{\epsilon}_{j,k}(x) = 2^{{jn}/{2}}
\Phi^{\epsilon}( 2^{j}x-k) $.

The following result is well-known.
\begin{lemma}\label{le3} {\rm (\cite{Me})}
$\Big\{\Phi^{\epsilon}_{j,k}(x),\ (\epsilon,j,k)\in
\Lambda_{n}\Big\}$ is an orthogonal basis in
$L^{2}(\mathbb{R}^{n})$.
\end{lemma}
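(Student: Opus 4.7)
The plan is to derive this from the one-dimensional multiresolution analysis (MRA) associated with the scale function $\Phi^{0}$ and then lift to $\mathbb{R}^n$ via a tensor-product decomposition. Since the lemma is attributed to Meyer and both the Meyer and Daubechies constructions are MRA-based, the proof should just record the structural argument rather than re-derive properties of $\Phi^{0}$ and $\Phi^{1}$.

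First I would recall the one-dimensional picture. The regularity and support conditions imposed on $\Phi^{0}$ in Section 2 are exactly what is needed for $\{\Phi^{0}(x-k)\}_{k\in\mathbb{Z}}$ to be an orthonormal system generating a nested family of closed subspaces $V_{j}\subset L^{2}(\mathbb{R})$ satisfying the standard MRA axioms: $V_{j}\subset V_{j+1}$, $f(x)\in V_{j}\iff f(2x)\in V_{j+1}$, $\bigcap_{j}V_{j}=\{0\}$, and $\overline{\bigcup_{j}V_{j}}=L^{2}(\mathbb{R})$. Defining the wavelet space $W_{j}$ as the orthogonal complement of $V_{j}$ in $V_{j+1}$, the construction of $\Phi^{1}$ guarantees that $\{\Phi^{1}_{j,k}\}_{k\in\mathbb{Z}}$ is an orthonormal basis of $W_{j}$. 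Combining these ingredients gives the orthogonal decomposition $L^{2}(\mathbb{R})=\bigoplus_{j\in\mathbb{Z}}W_{j}$ and hence the one-dimensional statement.

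Next I would pass to $\mathbb{R}^{n}$ by tensor products. Set $V_{j}^{(n)}=V_{j}\otimes\cdots\otimes V_{j}$ ($n$ factors). Writing $V_{j+1}=V_{j}\oplus W_{j}$ in each coordinate and expanding the tensor product, one obtains
\[
V_{j+1}^{(n)}=\bigotimes_{s=1}^{n}(V_{j}\oplus W_{j})=V_{j}^{(n)}\;\oplus\;\bigoplus_{\epsilon\in E_{n}}W_{j}^{\epsilon},
\]
where $W_{j}^{\epsilon}=W_{j}^{\epsilon_{1}}\otimes\cdots\otimes W_{j}^{\epsilon_{n}}$ with the convention $W_{j}^{0}=V_{j}$ and $W_{j}^{1}=W_{j}$, and the direct sums are orthogonal because of the one-dimensional orthogonality $V_{j}\perp W_{j}$. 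Iterating this relation and using that $V_{j}^{(n)}\to\{0\}$ as $j\to-\infty$ and $V_{j}^{(n)}\to L^{2}(\mathbb{R}^{n})$ as $j\to+\infty$ yields
\[
L^{2}(\mathbb{R}^{n})=\bigoplus_{j\in\mathbb{Z}}\bigoplus_{\epsilon\in E_{n}}W_{j}^{\epsilon}.
\]
Because each $W_{j}^{\epsilon}$ has $\{\Phi^{\epsilon}_{j,k}\}_{k\in\mathbb{Z}^{n}}$ as an orthonormal basis (a tensor product of one-dimensional orthonormal bases), the family $\{\Phi^{\epsilon}_{j,k}\}_{(\epsilon,j,k)\in\Lambda_{n}}$ is an orthonormal, hence orthogonal, basis of $L^{2}(\mathbb{R}^{n})$.

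The only real technical points are the density $\overline{\bigcup_{j}V_{j}^{(n)}}=L^{2}(\mathbb{R}^{n})$ and the triviality of $\bigcap_{j}V_{j}^{(n)}$; both reduce to the corresponding one-dimensional facts via standard approximation on tensor-product dense subsets (for instance simple functions supported in boxes, or Schwartz functions of product type). Since the one-dimensional MRA facts for Daubechies and Meyer wavelets are classical and recorded in the references cited (\cite{Me}, \cite{Woj}, \cite{Yang}), I would state the argument at the structural level indicated above and refer the reader to those sources for the details of the one-dimensional construction.
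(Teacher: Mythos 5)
The paper does not prove this lemma; it cites it directly from Meyer's book, and your argument is precisely the standard tensor-product MRA proof found there and in the other cited references. Your reconstruction is correct and matches the intended source, so nothing further is needed.
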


Let $f^{\epsilon}_{j,k}= \langle f, \Phi^{\epsilon}_{j,k}\rangle$,
$\forall \epsilon\in \{0,1\}^{n}$ and $k\in \mathbb{Z}^{n}$.
By Lemma \ref{le3}, any $L^{2}$ function $f(x)$ has a wavelet decomposition
$$f(x)=\sum_{(\epsilon,j,k)\in\Lambda_{n}} f^{\epsilon}_{j,k}\Phi^{\epsilon}_{j,k}(x).$$

We list some knowledge on Sobolev spaces and Hardy space. For
$1< p<\infty$, we denote by $p'$ the conjugate index of $p$, that is,
$\frac{1}{p}+\frac{1}{p'}=1$.  For a function space $A$, we denote by $(A)'$ the dual space of $A$.
For $1< p<\infty, r\in \mathbb{R}$ and
Sobolev spaces $W^{r,p}(\mathbb{R}^{n})$, we know that $(W^{r,p}(\mathbb{R}^{n}))'=
W^{-r,p'}(\mathbb{R}^{n})$, see \cite{Me}, \cite{Tr} and
\cite{Yang}.

Let $\chi(x)$  be the characteristic
function of the unit cube $[0,1]^n$. For function
$g(x)=\sum\limits_{(\epsilon,j,k)\in \Lambda_{n}} g^{\epsilon}_{j,k}
\Phi^{\epsilon}_{j,k}(x)$, we have the following characterization,
see \cite{Me}, \cite{Yang} and \cite{YCP}:

\begin{proposition}\label{prop1}
(i) Given $1< p<\infty$ and
$|r|<m$.
$$g(x)=\sum\limits_{(\epsilon,j,k)\in \Lambda_{n}} g^{\epsilon}_{j,k}
\Phi^{\epsilon}_{j,k}(x)\in W^{r,p}(\mathbb{R}^{n})$$ if and only if
\begin{eqnarray*}
&& \Big\|\Big(\sum\limits_{(\epsilon,j,k)\in
\Lambda_{n}} 2^{2j(r+\frac{n}{2})}|
g^{\epsilon}_{j,k}|^{2}\chi(2^{j}x-k)\Big)^{\frac{1}{2}}\Big\|_{L^{p}}<\infty.
\end{eqnarray*}
(ii) $g(x)=\sum\limits_{(\epsilon,j,k)\in \Lambda_{n}}
g^{\epsilon}_{j,k} \Phi^{\epsilon}_{j,k}(x)\in H^{1}(\mathbb{R}^{n})
$ if and only if
$$\Big\|\Big(\sum\limits_{(\epsilon,j,k)\in
\Lambda_{n}} 2^{nj}|
g^{\epsilon}_{j,k}|^{2}\chi(2^{j}x-k)\Big)^{\frac{1}{2}}\Big\|_{L^{1}}<\infty.$$
\end{proposition}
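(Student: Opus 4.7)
The plan is to recognize Proposition \ref{prop1} as the classical wavelet characterization of Sobolev and Hardy spaces, obtained by reducing to the Triebel--Lizorkin scale. Specifically, for $1<p<\infty$ one has $W^{r,p}(\mathbb{R}^n)=F^{r,p}_2(\mathbb{R}^n)$, and $H^{1}(\mathbb{R}^n)=F^{0,1}_2(\mathbb{R}^n)$; both identifications are standard (Triebel). It therefore suffices to show that the discrete wavelet square function appearing in (i) and (ii) is equivalent to the continuous Littlewood--Paley square function defining the corresponding $F^{r,p}_2$-norm.

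For part (i), I would first exploit the regularity and vanishing moments of $\Phi^{\epsilon}$ (guaranteed by the assumption $m>8n$) to establish the pointwise almost-orthogonality estimate
$$|Q_j g(x)| \lesssim \sum_{\epsilon,k} |g^{\epsilon}_{j,k}|\, 2^{jn/2}(1+|2^{j}x-k|)^{-N}$$
for arbitrarily large $N$, where $Q_j g = \sum_{\epsilon,k} g^{\epsilon}_{j,k}\Phi^{\epsilon}_{j,k}$. Coupled with the vanishing moments, this gives an almost-diagonal Calder\'on-type reproducing formula between wavelet blocks and Littlewood--Paley blocks in the Frazier--Jawerth sense. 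The Fefferman--Stein vector-valued maximal inequality in $L^{p}(\ell^{2})$ then shows that the continuous square function built from $\{2^{jr}Q_j g\}$ is comparable to
$$\Big(\sum_{(\epsilon,j,k)\in \Lambda_n} 2^{2j(r+n/2)}|g^{\epsilon}_{j,k}|^{2}\chi(2^{j}x-k)\Big)^{1/2},$$
the factor $2^{jn/2}\chi(2^{j}x-k)$ acting as the $L^{\infty}$-normalized indicator of $Q_{j,k}$.

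For part (ii), the argument is parallel: with $r=0$ and $p=1$, the same discrete square function now characterizes $F^{0,1}_2=H^{1}(\mathbb{R}^n)$. Alternatively, each block supported on $Q_{j,k}$ can be normalized into an $H^{1}$-atom, and the $L^{1}$-norm of the square function recovers the sum of atomic coefficients through the vector-valued maximal inequality in the $L^{1}(\ell^{2})$ endpoint, combined with the tent-space duality between $H^{1}$ and $BMO$ to justify the discretization.

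The main obstacle is the almost-diagonal estimate linking wavelet and Littlewood--Paley decompositions: one must show that the matrix converting one square function into the other is bounded on $L^{p}(\ell^{2})$ for $1<p<\infty$ and on $L^{1}(\ell^{2})$ after accounting for Hardy-space cancellation. The hypothesis $m>8n$ together with the vanishing moments is exactly what makes the off-diagonal decay fast enough to apply Schur's test (or Cotlar's lemma) and to justify the maximal-function step. Once this is in place, the norm equivalences, and hence both (i) and (ii), follow by routine bookkeeping with the help of the references \cite{Me}, \cite{Tr}, \cite{Yang}.
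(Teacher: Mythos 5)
The paper does not prove Proposition \ref{prop1}; it states it as a known result with pointers to \cite{Me}, \cite{Tr} and \cite{Yang}, which is where the argument lives. Your sketch is an accurate summary of the standard route taken in those references: identify $W^{r,p}$ with $F^{r}_{p,2}$ and $H^{1}$ with $F^{0}_{1,2}$, reduce the discrete wavelet square function to a continuous Littlewood--Paley one via almost-diagonality of the change-of-frame matrix (here the hypotheses $\Phi^{\epsilon}\in C^m_0$ and vanishing moments of order $m-1$ with $m>8n$ supply the required off-diagonal decay), then close the $1<p<\infty$ case with the Fefferman--Stein vector-valued maximal inequality and handle the $H^1$ endpoint by atomic or tent-space methods where the maximal inequality fails. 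So there is no substantive divergence from the paper, only a difference of exposition: you unpack the citation, while the paper leaves it opaque. One small caution if you were to flesh this out: in part (ii) the equivalence is between the $L^1$-norm of the discrete square function and the $H^1$-norm, not the $L^1$-norm of $g$ itself, so the ``atomic coefficients'' you invoke must come from the wavelet decomposition into $(1,2)$-atoms supported on dyadic cubes, and the tent-space duality step is not logically needed once that atomic decomposition is in hand.
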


\subsection{$Q$-spaces} We know that
$Q_{\alpha}(\mathbb{R}^{n})= BMO(\mathbb{R}^{n})$ for $\alpha\leq 0$. Further,
It is easy to see that the $Q$-spaces  defined in (\ref{eq:111}) are trivial for $\alpha\geq 1$.
In fact, for $\alpha\geq 1$
or $\alpha>\frac{n}{2}$, there are
only constants in $Q_{\alpha}(\mathbb{R}^{n})$ defined in (\ref{eq:111}).

To get rid of the restriction $\alpha\geq 1$,
we introduce a new definition of $Q$-spaces such that the $Q$-spaces are not
trivial for $1\leq \alpha\leq \frac{n}{2}$.

For $\alpha \in \mathbb{R}$,  denote by
$$f_{\alpha,Q}=|Q|^{-1} \int_{Q}
(-\Delta)^{\frac{\alpha}{2}}f(x) dx$$
 the mean value of the function
$(-\Delta)^{\frac{\alpha}{2}}f$ on the cube $Q$. For $\alpha\in \mathbb{R}$, let
$$B_{\alpha,Q}f=|Q|^{\frac{\alpha}{n}} \bigg( |Q|^{-1} \int_{Q}
|(-\Delta)^{\frac{\alpha}{2}}f(x)- f_{\alpha,Q}|^{2} dx\bigg) ^{\frac{1}{2}}.$$
$Q$-spaces $Q_{\alpha}(\mathbb{R}^{n})$ and $Q^{0}_{\alpha}(\mathbb{R}^{n})$ are defined as follows.

\begin{definition} \label{def:1} Given $\alpha\in[0,\frac{n}{2}]$.
\begin{itemize}
\item [(i)] $Q_{\alpha}(\mathbb{R}^{n})$  is a space defined as the set of all measurable functions $f$ with
$$\sup\limits_{Q}B_{\alpha,Q}f<\infty,$$ where the supremum is taken over all cubes $Q$.
\item [(ii)] $Q^{0}_{\alpha}(\mathbb{R}^{n})$ is a space defined as the set of all measurable functions $f\in Q_{\alpha}(\mathbb{R}^{n})$ and satisfying

$$\left\{\begin{array}{cl}
&\lim\limits_{|Q|\rightarrow 0 } B_{\alpha,Q}f=0,\\
&\lim\limits_{|Q|\rightarrow  \infty} B_{\alpha,Q}f=0,
\end{array}\right.$$
where the supremum and the limit are taken over all cubes $Q$.
\end{itemize}

\end{definition}

\begin{remark}
If $\alpha=\frac{n}{2}$, then $Q_{\frac{n}{2}}(\mathbb{R}^{n})=\dot{B}^{\frac{n}{2},2}_{2}(\mathbb{R}^{n})$.
For $1\leq \alpha\leq \frac{n}{2}$, the $Q$-spaces in Definition \ref{def:1}
are not trivial. Further, for other indices $\alpha$,
the corresponding $Q_{\alpha}(\mathbb{R}^{n})$ coincide with ones defined in \cite{EJPX}.
So $Q_{\alpha}(\mathbb{R}^{n})$, defined in Definition \ref{def:1}, is a generalization of
$Q$-spaces defined in (\ref{eq:111}).
\end{remark}

For $|\alpha|<m$, $Q\in\Omega$ and function
$f(x) = \sum\limits_{(\epsilon,j,k)\in \Lambda_{n}} f^{\epsilon}_{j,k}
\Phi^{\epsilon}_{j,k}(x)$, let
$$C_{\alpha,Q}f=|Q|^{\frac{\alpha}{n}-\frac{1}{2}}\Big(\sum\limits_{Q_{j,k}\subset
Q} 2^{2j\alpha}|f^{\epsilon}_{j,k}|^{2}\Big)^{\frac{1}{2}}.$$  By wavelet characterization
of Sobolev space $W^{s,2}(\mathbb{R}^{n})$, we get the following wavelet
characterization of $Q$-spaces, cf  \cite{YSY}:

\begin{proposition}\label{pro:1}
Given $0\leq \alpha \leq \frac{n}{2}$.
\begin{itemize}
\item[(i)] $f(x)=\sum\limits_{(\epsilon,j,k)\in \Lambda_{n}}
f^{\epsilon}_{j,k} \Phi^{\epsilon}_{j,k}(x) \in Q_{\alpha}(\mathbb{R}^{n})$ if and only if
$$\sup\limits_{Q\in\Omega}C_{\alpha,Q}f<\infty. $$

\item[(ii)] $f(x)=\sum\limits_{(\epsilon,j,k)\in \Lambda_{n}}
f^{\epsilon}_{j,k} \Phi^{\epsilon}_{j,k}(x) \in Q^{0}_{\alpha}(\mathbb{R}^{n})$ if and only if
\begin{equation}\label{eq:r}
\left\{\begin{array}{cl}
&\sup\limits_{Q\in\Omega} C_{\alpha,Q}f<\infty,\\
&\lim\limits_{Q\in \Omega,\, |Q|\rightarrow 0} C_{\alpha,Q}f=0,\\
&\lim\limits_{Q\in \Omega,\, |Q|\rightarrow  \infty} C_{\alpha,Q}f=0.
\end{array}\right.
\end{equation}
\end{itemize}
\end{proposition}

By Propositions \ref{prop1} and \ref{pro:1},  we may identify a function
$$g(x)=\sum\limits_{(\epsilon,j,k)\in \Lambda_{n}}
g^{\epsilon}_{j,k} \Phi^{\epsilon}_{j,k}(x)$$ with
the sequence $\{g^{\epsilon}_{j,k}\}_{(\epsilon,j,k)\in \Lambda_{n}}.$
%For example, the notation $\{f^{\epsilon}_{j,k} \}_{(\epsilon,j,k)\in \Lambda_{n}}
%\in \tilde{Q}_{\alpha}(\mathbb{R}^{n})$ is used to indicate $f(x)=\sum\limits_{(\epsilon,j,k)\in \Lambda_{n}}
%f^{\epsilon}_{j,k} \Phi^{\epsilon}_{j,k}(x) \in Q_{\alpha}(\mathbb{R}^{n})$ and
%the notation $ \{f^{\epsilon}_{j,k} \}_{(\epsilon,j,k)\in \Lambda_{n}}
%\in \tilde{Q}^{0}_{\alpha}(\mathbb{R}^{n})$ is similarly used to indicate
%$f(x)=\sum\limits_{(\epsilon,j,k)\in \Lambda_{n}}
%f^{\epsilon}_{j,k} \Phi^{\epsilon}_{j,k}(x) \in Q^{0}_{\alpha}(\mathbb{R}^{n})$.

\subsection{Calder\'on-Zygmund operators}
In this subsection, we introduce some preliminaries about Calder\'on-Zygmund
operators, see \cite{Me} and \cite{Stein}. For $x\neq y$, let
$K(x,y)$ be a smooth function such that
\begin{equation}\label{eq2}
|\partial ^{\alpha}_{x}\partial ^{\beta}_{y} K(x,y)| \leq
\frac{C}{|x-y|^{n+|\alpha|+|\beta|}}, \forall |\alpha|+ |\beta|\leq
N_{0},
\end{equation}
where $N_{0}$ is a large enough constant and $N_{0}\leq m$.

A linear operator $T$ is said to be a Calder\'on-Zygmund operator in $CZO(N_{0})$
if
\begin{itemize}
\item[(1)]
$T$ is continuous from $C^{1}(\mathbb{R}^{n})$ to $(C^{1}(\mathbb{R}^{n}))'$;
\item[(2)] There exists a kernel $K(x,y)$ satisfying (\ref{eq2}) and
for $x\notin {\rm supp} f(x)$,
$$Tf(x)=\int K(x,y) f(y) dy;$$
\item[(3)] $Tx^{\alpha}=T^{*}x^{\alpha}=0, \forall \alpha \in \mathbb{N}^{n}$
and $|\alpha|\leq N_{0}$.\end{itemize} %We denote by $CZO(N_{0})$ the class of operators $T$ satisfying the conditions (1), (2), (3).

\begin{remark}
The values of $K(x,y)$ in (\ref{eq2}) have not been defined for $x=y$.
According to Schwartz kernel theorem, the kernel $K(x,y)$
of a linear continuous operator $T$ is only a distribution in $S'(\mathbb{R}^{2n})$.
\end{remark}
Let $\{\Phi^{\epsilon}_{j,k}(x)\}_{(\epsilon,j,k)\in\Lambda_{n}}$ be a
sufficient regular wavelet basis.
For $ (\epsilon,j,k),$ $(\epsilon',j',k')\in \Lambda_{n}$, we denote
$$a^{\epsilon,\epsilon'}_{j,k,j',k'}= \Big\langle K(\cdot,\cdot),
\Phi^{\epsilon}_{j,k}(\cdot) \Phi^{\epsilon'}_{j',k'}(\cdot)\Big\rangle.$$

\begin{lemma}({\rm \cite{Me}}) (i) If~$T\in CZO(N_{0})$, for all $
(\epsilon,j,k)$ and $(\epsilon',j',k')\in \Lambda_{n},$ the coefficients
$a^{\epsilon,\epsilon'}_{j,k,j',k'}$ satisfy that
\begin{equation}\label{eq1}
|a^{\epsilon,\epsilon'}_{j,k,j',k'}| \leq C
2^{-|j-j'|(\frac{n}{2}+N_{0})}
\Big(\frac{2^{-j}+2^{-j'}}{2^{-j}+2^{-j'}
+|k2^{-j}-k'2^{-j'}|}\Big)^{n+N_{0}}.
\end{equation}

(ii) If $\{a^{\epsilon,\epsilon'}_{j,k,j',k'}\}_{
(\epsilon,j,k), (\epsilon',j',k')\in \Lambda_{n}}$ satisfies (\ref{eq1}), then
$$K(x,y)=\sum\limits_{
(\epsilon,j,k),(\epsilon',j',k')\in \Lambda_{n}}
a^{\epsilon,\epsilon'}_{j,k,j',k'}\Phi^{\epsilon}_{j,k}(x)
\Phi^{\epsilon'}_{j',k'}(y)$$ in the sense of distributions. Further, for any
$0<\delta<N_0$, we have $T\in CZO(N_{0}-\delta)$.

\end{lemma}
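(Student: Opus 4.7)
The plan is to prove both parts by reducing to the standard almost-orthogonality estimates for Calder\'on-Zygmund operators against a regular wavelet basis, in the spirit of Meyer's wavelet approach to the $T(1)$-theorem.

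For part (i), by symmetry I may assume $j\geq j'$. Write
$$a^{\epsilon,\epsilon'}_{j,k,j',k'}=\iint K(x,y)\Phi^{\epsilon}_{j,k}(x)\Phi^{\epsilon'}_{j',k'}(y)\,dx\,dy,$$
and let $x_{0}=2^{-j}k$ be the center of the support of $\Phi^{\epsilon}_{j,k}$. I would split into two regimes. \emph{Far-apart case:} when $|k2^{-j}-k'2^{-j'}|\geq C(2^{-j}+2^{-j'})$, Taylor-expand $x\mapsto K(x,y)$ around $x_{0}$ to order $N_{0}$. Since $\Phi^{\epsilon}$ has vanishing moments up to order $m-1\geq N_{0}$, the polynomial part integrates to zero against $\Phi^{\epsilon}_{j,k}$, so only the remainder contributes. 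Using (\ref{eq2}) together with $|x-y|\sim 2^{-j'}+|k2^{-j}-k'2^{-j'}|$ on the supports and $\|\Phi^{\epsilon}_{j,k}\|_{1}\sim 2^{-jn/2}$, one gets
$$|a^{\epsilon,\epsilon'}_{j,k,j',k'}|\leq C\,2^{-jN_{0}}\,2^{-jn/2}\,2^{-j'n/2}\,\bigl(2^{-j'}+|k2^{-j}-k'2^{-j'}|\bigr)^{-(n+N_{0})},$$
which, using $j\geq j'$, rearranges to the stated bound.

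The \emph{close-support case}, where $|k2^{-j}-k'2^{-j'}|\lesssim 2^{-j'}$, is the main obstacle. Here the second factor in (\ref{eq1}) is comparable to $1$, and one needs the scale-separation estimate $|a^{\epsilon,\epsilon'}_{j,k,j',k'}|\leq C\,2^{-(j-j')(n/2+N_{0})}$, which does \emph{not} follow from kernel size alone. To extract it I would use the cancellation hypotheses $Tx^{\alpha}=T^{*}x^{\alpha}=0$ for $|\alpha|\leq N_{0}$, together with the continuity of $T$ from $C^{1}$ to $(C^{1})'$: writing $\Phi^{\epsilon'}_{j',k'}$ locally on the support of $\Phi^{\epsilon}_{j,k}$ as its Taylor polynomial of order $N_{0}$ plus a remainder of size $\sim 2^{j'n/2}(2^{j'}|x-2^{-j'}k'|)^{N_{0}+1}\sim 2^{j'n/2}2^{-(j-j')(N_{0}+1)}$, the polynomial part is killed by $T^{*}x^{\alpha}=0$ (after pairing with $\Phi^{\epsilon}_{j,k}$), while the remainder together with $\|\Phi^{\epsilon}_{j,k}\|_{1}\sim 2^{-jn/2}$ yields the desired factor. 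This is essentially Meyer's argument (\cite{Me}).

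For part (ii), define $K(x,y)$ by the series displayed. The bound (\ref{eq1}) guarantees its absolute convergence as a distribution, and pointwise convergence on compacts off the diagonal. Differentiating termwise and using that $|\partial_{x}^{\alpha}\partial_{y}^{\beta}\Phi^{\epsilon}_{j,k}(x)\Phi^{\epsilon'}_{j',k'}(y)|$ produces extra factors $2^{j|\alpha|}2^{j'|\beta|}$ concentrated near $(2^{-j}k,2^{-j'}k')$, summation of geometric series in $k,k'$ and then in $j,j'$ yields $|\partial_{x}^{\alpha}\partial_{y}^{\beta}K(x,y)|\leq C_{\delta}|x-y|^{-n-|\alpha|-|\beta|}$ for $|\alpha|+|\beta|\leq N_{0}-\delta$, the loss $\delta>0$ being needed to absorb a logarithmic factor at the endpoint. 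The vanishing-moment conditions $Tx^{\alpha}=T^{*}x^{\alpha}=0$ for $|\alpha|\leq N_{0}-\delta$ are inherited from the vanishing moments of the wavelets in either variable of the series, while continuity of $T$ (hence from $C^{1}$ to $(C^{1})'$) follows from a Schur/almost-diagonal estimate applied to the matrix $(a^{\epsilon,\epsilon'}_{j,k,j',k'})$, yielding $T\in CZO(N_{0}-\delta)$.
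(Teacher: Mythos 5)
The paper does not prove this lemma; it simply cites Meyer \cite{Me}, so there is no internal proof to compare against. Your sketch reconstructs the standard almost-orthogonality argument from Meyer's ``Ondelettes et Op\'erateurs'', which is indeed the intended source. The far-apart case is worked out correctly: Taylor-expanding $K(\cdot,y)$ in $x$ around $x_0=2^{-j}k$ to order $N_0-1$ (killed by the vanishing moments of $\Phi^{\epsilon}_{j,k}$, since $m-1\geq N_0-1$), bounding the remainder via (\ref{eq2}), and using $\|\Phi^{\epsilon}_{j,k}\|_{L^1}\sim 2^{-jn/2}$ gives precisely $2^{-jN_0}2^{-jn/2}2^{-j'n/2}(2^{-j'}+|k2^{-j}-k'2^{-j'}|)^{-(n+N_0)}$, and your rearrangement of this into the form (\ref{eq1}) under $j\geq j'$ is arithmetically correct.

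The close-support case, however, has a genuine gap as written. You decompose $\Phi^{\epsilon'}_{j',k'}=P+R$ near $x_0$, with $P$ its Taylor polynomial, and assert that the polynomial part is annihilated by the cancellation hypotheses ``after pairing with $\Phi^{\epsilon}_{j,k}$.'' But $R=\Phi^{\epsilon'}_{j',k'}-P$ is not a test function: it grows polynomially at infinity, because $P$ does while $\Phi^{\epsilon'}_{j',k'}$ is compactly supported. So $TR$ is not directly defined by the hypothesis that $T$ maps $C^1$ to $(C^1)'$, and $\langle TR,\Phi^{\epsilon}_{j,k}\rangle$ cannot be taken at face value; the step where ``$TP=0$ leaves only $TR$'' is the one that needs work. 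The standard repair is to insert a cutoff $\theta$ adapted to the support of $\Phi^{\epsilon}_{j,k}$, split $\Phi^{\epsilon'}_{j',k'}$ into $\theta\Phi^{\epsilon'}_{j',k'}$ and $(1-\theta)\Phi^{\epsilon'}_{j',k'}$, treat the far piece by the kernel estimate combined with the vanishing moments of $\Phi^{\epsilon}_{j,k}$, and treat the near piece by subtracting a \emph{localized} low-order approximation and invoking the weak boundedness property to control the residual pairing with $\Phi^{\epsilon}_{j,k}$. You mention the $C^1\to(C^1)'$ continuity but do not connect it to the pairing against the localized remainder, and this is precisely where the cancellation and the weak boundedness interact; it is the nontrivial core of the scale-separation estimate. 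Without this step the argument does not close.

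Part (ii) is a correct outline: defining $K$ by the series, the Schur-type estimate implied by (\ref{eq1}) gives $L^2$-boundedness and hence the $C^1\to(C^1)'$ continuity; differentiating term by term and summing geometric series in $k,k'$ and then in $j,j'$ yields the kernel bounds with exponent $N_0-\delta$, the loss $\delta>0$ absorbing the logarithmic divergence at the endpoint scale count; and $Tx^{\alpha}=T^*x^{\alpha}=0$ are inherited from the vanishing moments of the wavelets on each side.
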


At the end of this subsection, we list a variant result about
the continuity of Calder\'on-Zygmund operators on Sobolev spaces ( see
also [12]).

For all $(\epsilon,j,k)\in \Lambda_{n}$,
denote
$$\tilde{g}^{\epsilon}_{j,k} =\sum \limits_{
(\epsilon',j',k')\in \Lambda_{n}} a^{\epsilon,\epsilon'}_{j,k,j',k'}
g^{\epsilon'}_{j',k'}.$$ We have

\begin{lemma} Given $|r|<s\leq m$ and $1< p<\infty$. If\ $
\forall\, (\epsilon,j,k),\, (\epsilon',j',k')\in \Lambda_{n},$
$$|a^{\epsilon,\epsilon'}_{j,k,j',k'}|
\leq C 2^{-|j-j'|(\frac{n}{2}+s)}
\Big(\frac{2^{-j}+2^{-j'}}{2^{-j}+2^{-j'}
+|k2^{-j}-k'2^{-j'}|}\Big)^{n+s},$$  then
\begin{eqnarray*}
&&\int \Big(\sum\limits_{(\epsilon,j,k)\in \Lambda_{n}}
2^{j(n+2r)} |\tilde{g}^{\epsilon}_{j,k}|^{2} \chi (2^{j}x-k)
\Big)^{\frac{p}{2}} dx\\
&\leq& C \int \Big(\sum \limits_{(\epsilon,j,k)\in \Lambda_{n}}
2^{j(n+2r)} |g^{\epsilon}_{j,k}|^{2} \chi (2^{j}x-k) \Big)^{\frac{p}{2}} dx.\end{eqnarray*}
\end{lemma}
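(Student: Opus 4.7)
The plan is to interpret the claimed inequality as the $W^{r,p}$-boundedness of the operator $T$ whose wavelet matrix is $\{a^{\epsilon,\epsilon'}_{j,k,j',k'}\}$, and to prove it by an almost-orthogonality argument indexed by the scale gap $\ell=j-j'$. By Proposition \ref{prop1}, the two sides of the target inequality are equivalent to $\|\tilde g\|_{W^{r,p}}^p$ and $\|g\|_{W^{r,p}}^p$ (where $\tilde g$ denotes the function with wavelet coefficients $\tilde g^{\epsilon}_{j,k}$), so it suffices to show that the matrix action is bounded on $W^{r,p}$ whenever $|r|<s$ and $1<p<\infty$.

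For each fixed $\ell\in\mathbb{Z}$, consider the single-scale block
$$(T_\ell g)^{\epsilon}_{j,k}=\sum_{\epsilon',k'} a^{\epsilon,\epsilon'}_{j,k,j-\ell,k'}\,g^{\epsilon'}_{j-\ell,k'}.$$
The polynomial factor in the hypothesis is an $L^1$-normalized weight on the coarser of the two scales $2^{-j}$ and $2^{-(j-\ell)}$. A weighted Cauchy-Schwarz applied on the index set that yields the geometric gain (summing $|a|$ over the finer of the two lattices) produces
$$|(T_\ell g)^\epsilon_{j,k}|^2\leq C\,2^{-2|\ell|(n/2+s)}\sum_{\epsilon',k'} w^{\ell}_{j,k}(k')\,|g^{\epsilon'}_{j-\ell,k'}|^2,$$
where $w^{\ell}_{j,k}(\cdot)$ has total mass $O(1)$. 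Rewriting $2^{j(n+2r)}$ as $2^{(j-\ell)(n+2r)}\cdot 2^{\ell(n+2r)}$, the weight mismatch $2^{\ell(n+2r)}$ combines with the decay $2^{-|\ell|(n+2s)}$ to leave a net factor of at most $2^{-2|\ell|(s-|r|)}$. A bounded number of dyadic translates relate $\chi(2^j x-k)$ to $\chi(2^{j-\ell}x-k')$, so the weighted sum on the right is pointwise controlled by the Hardy-Littlewood maximal function of $G(\cdot)^2$ at scale $2^{-(j-\ell)}$.

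Taking $L^{p/2}$-norms and invoking the Fefferman-Stein vector-valued maximal inequality, valid for $1<p<\infty$, converts this pointwise bound into
$$\Bigl\|\sum_{\epsilon,j,k} 2^{j(n+2r)}|(T_\ell g)^\epsilon_{j,k}|^2\,\chi(2^jx-k)\Bigr\|^{1/2}_{L^{p/2}}\leq C\,2^{-|\ell|(s-|r|)}\,\|G\|_{L^p}.$$
Summing this geometric series over $\ell\in\mathbb{Z}$, which converges precisely because $|r|<s$, yields the lemma. The main obstacle is executing the $\ell<0$ case cleanly: the Cauchy-Schwarz must then be applied dually, summing over the finer index set, and the resulting discrete maximal function lives on a lattice different from that appearing in the $\ell>0$ case, so some care is needed to package both cases under the same vector-valued maximal inequality. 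The hypothesis $|r|<s$ is indispensable at the final summation, since without it the gain $2^{-|\ell|(s-|r|)}$ is no longer summable in $\ell$.
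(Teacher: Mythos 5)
Your outline captures the standard scheme (decompose by scale gap $\ell$, pointwise maximal-function bound, Fefferman--Stein vector-valued maximal inequality, geometric summation), but the Cauchy--Schwarz step is a genuine misstep that breaks the chain. After Cauchy--Schwarz you hold a bound of the form
\[
2^{j(n+2r)}|(T_\ell g)^\epsilon_{j,k}|^2 \lesssim 2^{-2|\ell|(s-|r|)}\sum_{k'} w^\ell_{j,k}(k')\,2^{j'(n+2r)}|g^{\epsilon'}_{j',k'}|^2,
\]
and converting the right-hand side to a maximal function gives $M\bigl(G_{j'}^2\bigr)(x)$, \emph{not} $\bigl(MG_{j'}(x)\bigr)^2$. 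Summing over $j$ you are therefore left with $\bigl\|\sum_{j'}M(G_{j'}^2)\bigr\|_{L^{p/2}}$, and to close the argument you would need the Fefferman--Stein inequality with inner exponent $q=1$, i.e.\ $\|\sum_j Mh_j\|_{L^{P}}\lesssim\|\sum_j|h_j|\|_{L^{P}}$. This is \emph{false}: take $n=1$ and $h_j=\chi_{[j2^{-N},(j+1)2^{-N}]}$ for $0\le j<2^N$; then $\sum_j|h_j|=\chi_{[0,1]}$ has $L^P$-norm $1$, while $\sum_jMh_j(x)\gtrsim N$ on $[0,1]$. Pointwise you also cannot replace $\sum_{j'}M(G_{j'}^2)$ by $M\bigl(\sum_{j'}G_{j'}^2\bigr)$, since $\sum_j Mh_j\le M(\sum_j|h_j|)$ fails in general. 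So the invocation of Fefferman--Stein as written does not go through.

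The fix is to \emph{not} Cauchy--Schwarz. Estimate the weighted $\ell^1$ sum of the coefficients $|g^{\epsilon'}_{j',k'}|$ (first power, not squares) directly: since the weight decays like $(1+2^{\min(j,j')}|k2^{-j}-k'2^{-j'}|)^{-(n+s)}$ with $n+s>n$, a dyadic shell decomposition shows
\[
\sum_{k'}\tilde w(k')\,2^{j'(n+2r)/2}|g^{\epsilon'}_{j',k'}| \lesssim 2^{n\max(0,-\ell)}\,MG_{j'}(x),\qquad x\in Q_{j,k},
\]
and after inserting the Jacobian factor $2^{(j-j')(n+2r)/2}$ the net geometric prefactor is $2^{-|\ell|(s-r)}$ for $\ell\ge0$ and $2^{-|\ell|(s+r)}$ for $\ell<0$, both $\le 2^{-|\ell|(s-|r|)}$. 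This yields $F_j(x)\lesssim 2^{-|\ell|(s-|r|)}MG_{j-\ell}(x)$, hence $(\sum_j F_j^2)^{1/2}\lesssim 2^{-|\ell|(s-|r|)}(\sum_{j'}(MG_{j'})^2)^{1/2}$, and Fefferman--Stein now applies legitimately with $q=2>1$. This also dissolves the $\ell<0$ ``obstacle'' you flag: once the $\ell^1$-mass of the weight (which is $O(2^{n|\ell|})$ for $\ell<0$) and the Jacobian are tracked, the two cases are uniform and no dual Cauchy--Schwarz is needed. (The paper itself does not reprove the lemma; it cites the companion reference on continuity of Calder\'on--Zygmund operators on Triebel--Lizorkin spaces, where the proof follows this corrected scheme.)
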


\subsection{Generalized Hardy spaces}
In \cite{PY} and \cite{Yang}, L. Peng and Q. Yang used atoms to define the predual
spaces of the $Q$-spaces. Below we introduce the
standard atoms, the wavelet atoms and the related generalized Hardy
spaces:
\begin{definition}
Given $0\leq\alpha<\frac{n}{2}$.
\begin{itemize}
\item[(i)] A distribution $g(x)$ is an $(\alpha, 2)-$atom on a cube $Q$
if
\begin{itemize}
\item[(1)] $\|(-\Delta)^{-\frac{\alpha}{2}}g\|_{L^{2}} \leq
|Q|^{-\frac{1}{2}+\frac{\alpha}{n}} $,
\item[(2)] ${\rm supp} g(x) \subset Q,$
\item[(3)] $\int x^{\beta} g(x)dx=0,\forall |\beta|\leq |\alpha|$.
\end{itemize}

\item[(ii)] A distribution $f(x)$ belongs to a Hardy space
$P^{\alpha}(\mathbb{R}^{n})$ if
$$f(x)= \sum\limits_{u\in\mathbb{Z}}
\lambda_{u} g_{u}(x),$$
 where $\{\lambda_{u}\}_{u\in\mathbb{Z}}\in l^{1}$ and $g_{u}(x)$
are $( \alpha, 2)-$atoms.
\end{itemize}
\end {definition}

\begin{definition}Given $0\leq\alpha<\frac{n}{2}$.
\begin{itemize}
\item[(i)] A distribution $g(x)=\sum\limits_{\epsilon\in E_{n},Q_{j,k}\subset Q}
g^{\epsilon}_{j,k} \Phi^{\epsilon}_{j,k}(x)$ is a $(\alpha,2)-$wavelet atom
on a dyadic cube $Q$ if
$$\Big(\sum\limits_{(\epsilon,j,k)\in \Lambda_{n}} 2^{-2j\alpha} |g^{\epsilon}_{j,k}|^{2}\Big)
^{\frac{1}{2}} \leq |Q|^{\frac{\alpha}{n}-\frac{1}{2}} .$$

\item[(ii)] A distribution $f(x)$ belongs to a Hardy space
$P^{\alpha}_{w}(\mathbb{R}^{n})$ if
$$f(x)= \sum\limits_{u\in\mathbb{Z}}
\lambda_{u} g_{u}(x),$$ where $\{\lambda_{u}\}_{u\in\mathbb{Z}}\in l^{1}$ and
$g_{u}(x)$ are $(\alpha, 2)-$wavelet atoms.\end{itemize}
\end{definition}

In fact, $P^{\alpha}(\mathbb{R}^{n})$ and $P^{\alpha}_{w}(\mathbb{R}^{n})$ are the same spaces
and Calder\'on-Zygmund operators are bounded on $P^{\alpha}(\mathbb{R}^{n})$. The proof of the
following lemma can be found in \cite{PY} and \cite{Yang}.
\begin{proposition}\label{pro:CZ}
Given $0\leq \alpha< \frac{n}{2}$.
\begin{itemize}
\item[(i)] $P^{\alpha}(\mathbb{R}^{n})=P^{\alpha}_{w}(\mathbb{R}^{n})$.

\item[(ii)] Any operator $T\in CZO(N_{0})$ is bounded
on $P^{\alpha}(\mathbb{R}^{n})$.
\end{itemize}
\end{proposition}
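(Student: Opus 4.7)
The strategy for (i) is to compare the two families of atoms. For $P^{\alpha}_{w}\subset P^{\alpha}$, I would take a wavelet atom $g=\sum_{\epsilon\in E_{n},\,Q_{j,k}\subset Q}g^{\epsilon}_{j,k}\Phi^{\epsilon}_{j,k}$ and verify the three defining conditions of an $(\alpha,2)$-atom on a fixed enlargement $cQ$: the support condition from the compact support of Daubechies wavelets (or a geometric truncation argument that splits $g$ as a summable series of standard atoms in the Meyer case); the vanishing moments inherited from the $\Phi^{\epsilon}$; and, by Proposition \ref{prop1}(i) with $p=2$ and $r=-\alpha$, the size estimate $\|(-\Delta)^{-\alpha/2}g\|_{L^{2}}\sim\bigl(\sum 2^{-2j\alpha}|g^{\epsilon}_{j,k}|^{2}\bigr)^{1/2}\le|Q|^{\alpha/n-1/2}$, which is exactly the wavelet atom condition. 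For $P^{\alpha}\subset P^{\alpha}_{w}$, I would take an $(\alpha,2)$-atom $g$ on $Q$ and choose a dyadic cube $Q^{*}$ with $Q\subset Q^{*}$ and $|Q^{*}|\sim|Q|$. Splitting the wavelet expansion of $g$ into a local piece with $Q_{j,k}\subset cQ^{*}$ and annular pieces with $Q_{j,k}\subset 2^{\ell}cQ^{*}\setminus 2^{\ell-1}cQ^{*}$, the former is a wavelet atom on $cQ^{*}$ of the right size (again via Proposition \ref{prop1}(i)), while each annular piece is controlled using the support of $g$ in $Q$, its vanishing moments, and the regularity of the wavelets to produce pointwise decay of $g^{\epsilon}_{j,k}$ in $\ell$; this decay beats the enlargement penalty $|2^{\ell}Q^{*}|^{1/2-\alpha/n}$ and yields a wavelet atom on $2^{\ell}cQ^{*}$ multiplied by an $\ell^{1}$-summable coefficient.

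For (ii), by part (i) it suffices to show that $T$ sends a wavelet atom $g$ on a dyadic cube $Q$ into $P^{\alpha}_{w}$. Writing $Tg=\sum \tilde g^{\epsilon'}_{j',k'}\Phi^{\epsilon'}_{j',k'}$ with $\tilde g^{\epsilon'}_{j',k'}=\sum a^{\epsilon,\epsilon'}_{j,k,j',k'}g^{\epsilon}_{j,k}$ and the matrix entries satisfying (\ref{eq1}), I would split the output sum by the location of $Q_{j',k'}$ relative to $Q$: a central piece on $2Q$ plus annular pieces collecting those $(j',k')$ with $Q_{j',k'}\subset 2^{\ell+1}Q\setminus 2^{\ell}Q$ for $\ell\ge 1$. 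A Schur-type estimate on the weighted sequence $\{2^{-j\alpha}g^{\epsilon}_{j,k}\}$, combining the spatial decay $|k2^{-j}-k'2^{-j'}|^{-n-N_{0}}$ with the scale decay $2^{-|j-j'|(n/2+N_{0})}$, yields $\bigl(\sum_{Q_{j',k'}\subset 2^{\ell+1}Q\setminus 2^{\ell}Q} 2^{-2j'\alpha}|\tilde g^{\epsilon'}_{j',k'}|^{2}\bigr)^{1/2}\le C\,2^{-\ell\gamma}|Q|^{\alpha/n-1/2}$ for some $\gamma>0$ depending on $N_{0}$ and $\alpha$. Renormalizing to a wavelet atom on $2^{\ell}Q$, which inserts the factor $|2^{\ell}Q|^{1/2-\alpha/n}=2^{\ell n(1/2-\alpha/n)}|Q|^{1/2-\alpha/n}$, produces a wavelet atom with prefactor $\lambda_{\ell}\lesssim 2^{-\ell(\gamma-n(1/2-\alpha/n))}$; since $N_{0}$ is large and $\alpha<n/2$, the resulting exponent is positive and $\sum\lambda_{\ell}<\infty$, giving the desired atomic decomposition of $Tg$.

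The main obstacle is the bookkeeping in (ii). The spatial and scale decay from (\ref{eq1}) must jointly dominate the enlargement factor $2^{\ell n(1/2-\alpha/n)}$ uniformly in both scale variables $j$ and $j'$, which forces a careful weighted Schur estimate and the hypothesis that $N_{0}$ be taken strictly larger than $n/2-\alpha$. A subtler point is to check that, after summing Schur bounds over cross-scales $(j,j')$, no factor diverging with the number of scales inside $Q$ enters; this uses the geometric scale decay $2^{-|j-j'|(n/2+N_{0})}$ together with the $\ell^{2}$-normalization built into the wavelet atom condition. Compared with the relatively routine estimates in part (i), this annular decomposition and its interaction with the two-parameter kernel decay constitute the analytically delicate step.
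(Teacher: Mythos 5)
The paper itself does not prove Proposition \ref{pro:CZ}; it states ``The proof of the following lemma can be found in \cite{PY} and \cite{Yang}.'' So there is no in-paper argument to compare against. That said, the strategy you sketch is the standard one for results of this type and the overall outline is correct: for (i), the comparison of the two atom families via Proposition \ref{prop1}(i) with $p=2$, $r=-\alpha$ is exactly the right tool (the $W^{-\alpha,2}$-norm and the weighted $\ell^2$-sum $\bigl(\sum 2^{-2j\alpha}|g^{\epsilon}_{j,k}|^2\bigr)^{1/2}$ are equivalent), and the vanishing moments/support bookkeeping with Daubechies wavelets handles the geometry; for (ii), reducing to the action of $T$ on a single wavelet atom and performing an annular decomposition driven by the almost-diagonal estimate (\ref{eq1}) is the Meyer-style argument that the cited references use.

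Two places where your sketch needs to be tightened before it is a proof. First, the annuli $2^{\ell+1}Q\setminus 2^{\ell}Q$ are not dyadic, and dyadic cubes $Q_{j',k'}$ with $2^{-j'}$ comparable to or larger than $2^{\ell}\ell(Q)$ will not be \emph{contained} in any single annulus; likewise, coarse output cubes $Q_{j',k'}\supset Q$ are not captured by your decomposition at all. The standard fix is to assign each output $(j',k')$ to a dyadic shell by a rule such as ``the smallest $\ell\ge 0$ with $Q_{j',k'}\subset 2^{\ell}Q^{*}$'' for a fixed dyadic $Q^{*}\supset Q$, and to observe that coarse cubes containing $Q$ are automatically controlled by the scale decay $2^{-|j-j'|(n/2+N_{0})}$ coming from $T^{*}x^{\beta}=0$, so they do not need a separate spatial annulus. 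Second, the stated threshold ``$N_{0}>n/2-\alpha$'' is not what the weighted Schur estimate actually delivers: before one can subtract the enlargement penalty $2^{\ell(n/2-\alpha)}$, the Schur sums over $(j,k)$ and $(j',k')$ must themselves converge, which requires $N_{0}$ large enough to absorb the Jacobian factors $2^{\pm jn/2}$, $2^{\pm j'n/2}$ \emph{and} the weight $2^{\mp j\alpha}$; in the paper's setup $N_{0}\le m$ with $m>8n$, so $N_{0}$ is available well beyond any such threshold, but your proof should make the condition explicit rather than quote $n/2-\alpha$. With these repairs the argument goes through and matches what one finds in \cite{PY} and \cite{Yang}.
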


For $\alpha=\frac{n}{2}$, define
$$P^{\frac{n}{2}}(\mathbb{R}^{n})=:\dot{B}^{-\frac{n}{2},2}_{2}(\mathbb{R}^{n}).$$
Applying the same ideas in \cite{PY}, \cite{S}, \cite{Stegenga} and
\cite{YSY}, we have the following duality relation.

\begin{proposition} \label{pro:dual}
Given $0\leq \alpha\leq \frac{n}{2}$.
\begin{itemize}
\item[(i)] $(P^{\alpha}(\mathbb{R}^{n}))'=Q_{\alpha}(\mathbb{R}^{n});$

\item[(ii)] $(Q^{0}_{\alpha}(\mathbb{R}^{n}))'=P^{\alpha}(\mathbb{R}^{n}).$
\end{itemize}
\end{proposition}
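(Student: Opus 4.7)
The plan is to use the wavelet characterizations in Propositions~\ref{pro:1} and \ref{pro:CZ}(i) to recast both sides of the duality as a K\"othe-type pairing between weighted $\ell^{2}$ sequence conditions, one cube at a time.

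For part (i), the embedding $Q_{\alpha}\hookrightarrow(P^{\alpha})'$ is direct: given $h\in Q_{\alpha}$ and an $(\alpha,2)$-wavelet atom $g(x)=\sum_{Q_{j,k}\subset Q}g^{\epsilon}_{j,k}\Phi^{\epsilon}_{j,k}(x)$ supported on a dyadic cube $Q$, expand the pairing as $\langle g,h\rangle=\sum_{Q_{j,k}\subset Q}g^{\epsilon}_{j,k}h^{\epsilon}_{j,k}$ and balance the weights $2^{-j\alpha}$ from the atomic normalization against $2^{j\alpha}$ from the wavelet characterization of $Q_{\alpha}$. Cauchy--Schwarz yields
\[
|\langle g,h\rangle|\leq \Big(\sum_{Q_{j,k}\subset Q}2^{-2j\alpha}|g^{\epsilon}_{j,k}|^{2}\Big)^{1/2}\Big(\sum_{Q_{j,k}\subset Q}2^{2j\alpha}|h^{\epsilon}_{j,k}|^{2}\Big)^{1/2}\leq \|h\|_{Q_{\alpha}},
\]
and $\ell^{1}$-summation over the wavelet atomic decomposition of $f\in P^{\alpha}=P^{\alpha}_{w}$ gives $|\langle f,h\rangle|\leq\|f\|_{P^{\alpha}}\|h\|_{Q_{\alpha}}$. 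For the reverse inclusion $(P^{\alpha})'\subset Q_{\alpha}$, given $L\in(P^{\alpha})'$ set $h^{\epsilon}_{j,k}:=L(\Phi^{\epsilon}_{j,k})$ (each rescaled wavelet is an atom, so individual coefficients are controlled). Testing $L$ against finite sums $g=\sum_{Q_{j,k}\subset Q}c^{\epsilon}_{j,k}\Phi^{\epsilon}_{j,k}$ that are wavelet atoms on $Q$, and invoking the $\ell^{2}$ self-duality of the weight $2^{-2j\alpha}$ in the atomic normalization, extracts
\[
|Q|^{\frac{\alpha}{n}-\frac{1}{2}}\Big(\sum_{Q_{j,k}\subset Q}2^{2j\alpha}|h^{\epsilon}_{j,k}|^{2}\Big)^{1/2}\leq\|L\|
\]
uniformly in $Q$, which by Proposition~\ref{pro:1}(i) places $h$ in $Q_{\alpha}$.

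For part (ii), the embedding $P^{\alpha}\hookrightarrow(Q^{0}_{\alpha})'$ is immediate from (i) combined with $Q^{0}_{\alpha}\subset Q_{\alpha}$. The reverse inclusion $(Q^{0}_{\alpha})'\subset P^{\alpha}$ follows the classical strategy that recovers $H^{1}$ from $(VMO)'$: using the vanishing conditions as $|Q|\to 0$ and $|Q|\to\infty$ in Proposition~\ref{pro:1}(ii), one shows that finite wavelet sums are norm-dense in $Q^{0}_{\alpha}$, so any $L\in(Q^{0}_{\alpha})'$ is determined by $\{L(\Phi^{\epsilon}_{j,k})\}$ with the same cube-uniform bound as in part (i); the atomic characterization of $P^{\alpha}=P^{\alpha}_{w}$ then reconstructs an element of $P^{\alpha}$ representing $L$.

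The main technical obstacle will be the density of finite wavelet sums in $Q^{0}_{\alpha}$, which underpins the reverse inclusion in part (ii): one must show that truncating the wavelet expansion of $h\in Q^{0}_{\alpha}$ to a finite range of scales and translations produces an error small in $Q_{\alpha}$-norm, and this is precisely where \emph{both} vanishing conditions in Definition~\ref{def:1}(ii) are essential---the analogue of the density of $C_{c}^{\infty}$ in $VMO$. A secondary subtlety is verifying that the formal series $\sum h^{\epsilon}_{j,k}\Phi^{\epsilon}_{j,k}$ extracted from $L$ in part (i) converges as a tempered distribution; the uniform cube bound displayed above, combined with Proposition~\ref{pro:1}(i), handles this without further work.
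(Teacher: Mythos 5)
The paper does not actually prove Proposition~\ref{pro:dual}; it is stated with a pointer to the methods of \cite{PY}, \cite{S}, \cite{Stegenga}, and \cite{YSY}, so there is no in-paper argument to compare step by step. Your sketch is a faithful instantiation of exactly the route those references take: cube-by-cube Cauchy--Schwarz on wavelet coefficients for $Q_{\alpha}\hookrightarrow(P^{\alpha})'$, testing $L$ against $(\alpha,2)$-wavelet atoms to recover the cube-uniform bound for the reverse inclusion, and the Sarason/$VMO$-type density argument for part (ii). Part (i) is correct, once one adds the routine remark that finite wavelet sums are dense in $P^{\alpha}$ (truncating the wavelet expansion of each atom produces an error that is a small multiple of an atom), so that the constructed $h$ represents $L$ on all of $P^{\alpha}$ and not merely on finite sums.

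The gap is in the reverse direction of part (ii), beyond the density obstacle you rightly flag. You write that $L\in(Q^{0}_{\alpha})'$ is determined by $\{L(\Phi^{\epsilon}_{j,k})\}$ "with the same cube-uniform bound as in part (i)," and that the atomic characterization of $P^{\alpha}=P^{\alpha}_{w}$ then "reconstructs an element of $P^{\alpha}$." But the roles are not symmetric: in part (i) the test functions were $(\alpha,2)$-wavelet atoms (normalized in $P^{\alpha}$), and the cube-uniform bound they yield is precisely the $Q_{\alpha}$-condition of Proposition~\ref{pro:1}(i). Running the same test on $L\in(Q^{0}_{\alpha})'$ would again place the coefficient sequence $g^{\epsilon}_{j,k}=L(\Phi^{\epsilon}_{j,k})$ in $Q_{\alpha}$, which is far weaker than $g\in P^{\alpha}$. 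What is really needed is to show that
$\sup\{\lvert\sum h^{\epsilon}_{j,k}g^{\epsilon}_{j,k}\rvert:\ h\ \text{finite wavelet sum},\ \lVert h\rVert_{Q_{\alpha}}\leq 1\}<\infty$
forces an $\ell^{1}$-decomposition of $g$ into $(\alpha,2)$-wavelet atoms (equivalently, that $Q^{0}_{\alpha}$ is a norming subspace for $P^{\alpha}$ and the isometric embedding $P^{\alpha}\hookrightarrow(Q^{0}_{\alpha})'$ is onto). Deducing this from $P^{\alpha}=P^{\alpha}_{w}$ is circular: that proposition tells you two spaces coincide, not that a pairing bound against $Q^{0}_{\alpha}$ produces an atomic decomposition. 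This is the analogue of the hard half of $(VMO)'=H^{1}$, and it requires a genuine stopping-time/Carleson-type construction (or an appeal to the structure in Theorems~\ref{th4} and~\ref{cha}); your sketch should name that step rather than treat it as bookkeeping.
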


\section{Micro-local quantities for $P^{\alpha}(\mathbb{R}^{n})$}

If $\alpha=0$, $P^{0}(\mathbb{R}^{n})=H^{1}(\mathbb{R}^{n})$. If
$\alpha=\frac{n}{2}$,  $P^{\frac{n}{2}}(\mathbb{R}^{n})=\dot{B}^{-\frac{n}{2},2}_{2}(\mathbb{R}^{n}).$
It is well known that the norms of
$P^{0}(\mathbb{R}^{n})$ and $P^{\frac{n}{2}}(\mathbb{R}^{n})$ depend only on the
$L^{p}(l^{2})$-norms of function series $\{f_{j}=Q_{j}f\}_{j\in \mathbb{Z}}$ for $p=1$ and $p=2$, respectively.
For the case $0<\alpha<\frac{n}{2}$, the situation is complicated.

In this section, we use wavelets to analyze the micro-local structure of $P^{\alpha}(\mathbb{R}^{n})$.
First, we present a theorem on conditional maximum value in Subsection 3.1.
Then we consider the micro-local quantities in Subsection 3.2.

\subsection{Conditional maximum value for non-negative sequence}
For $u\in \mathbb{N}$, we denote
$$\left\{\begin{array}{cl}
\Lambda_{u,n}=& \Big\{0,1,\cdots,
2^{u}-1\Big\}^{n};\\
G_{u,n}=&\Big\{(\epsilon,s,v),\epsilon\in E_{n}, 0\leq
s\leq u, v\in \Lambda_{s,n}\Big\}.
\end{array}\right.$$

$\forall j\in \mathbb{Z}$, $k\in \mathbb{Z}^n$, $t\in \mathbb{N}$
and  sequence $\tilde{g}^t_{j,k}=\{g^{\epsilon}_{j+s, 2^s k+u}\}_{
(\epsilon,s,u)\in G_{t,n}}$, we define
\begin{definition}
We call $\tilde{g}^{t}_{j,k}=\{g^{\epsilon}_{j+s, 2^s k+u}\}_{(\epsilon,s,u)
\in G_{t,n}}$ a non-negative sequence if
$\tilde{g}^{t}_{j,k}$ satisfies \begin{equation}\label{eq:non}\forall \,\,
(\epsilon,s,u)\in G_{t,n},\,\, g^{\epsilon}_{j+s, 2^s k+u}\geq 0.
\end{equation}
\end{definition}

For  a non-negative sequence $\tilde{g}^{t}_{j,k}$,
we find the maximum value of the following quantities:
\begin{equation}\label{eq:maxm}\tau_{f^{t}_{j,k}, \tilde{g}^t_{j,k}}=
\sum\limits_ {(\epsilon,s,u)\in G_{t,n}}    f^t_{j,k} \tilde{g}^t_{j,k},
\end{equation}
where non-negative sequence $f^t_{j,k}=\{f^{\epsilon}_{j+s, 2^s k+u}\}_{(\epsilon,s,u)\in G_{t,n}}$
satisfies the following $\sum\limits_{0\leq s\leq t} 2^{ns}$ restricted conditions
\begin{equation}\label{eq:max}
\left\{\begin{array}{cll}
2^{n(j+t)}\sum\limits_{\epsilon\in E_n} (f^{\epsilon}_{j+t, 2^t k+u})^2&\leq 1,&
\forall u\in \Lambda_{t,n};\\
2^{n(j+t-1)}\sum\limits_{(\epsilon,s,v)\in G_{1,n}} 2^{2s\alpha}(f^{\epsilon}_{j+t-1+s, 2^{s}(2^{t-1} k+u)+v})^2&\leq 1,&
\forall u\in \Lambda_{t-1,n};\\
2^{n(j+t-2)}\sum\limits_{(\epsilon,s,v)\in G_{2,n}} 2^{2s\alpha}(f^{\epsilon}_{j+t-2+s, 2^{s}(2^{t-2} k+u)+v})^2&\leq 1,&
\forall u\in \Lambda_{t-2,n};\\
\cdots&\leq 1,&\cdots;\\
2^{nj}\sum\limits_{(\epsilon,s,v)\in G_{t,n}} 2^{2s\alpha}(f^{\epsilon}_{j+s, 2^{s} k+v})^2&\leq 1.&
\end{array}
\right.
\end{equation}

There exist $(2^{n}-1)\sum\limits_{0\leq s\leq t} 2^{ns}$ elements in $G_{t,n}$. We can see that
$f^t_{j,k}$ is a sequence, where the number of nonnegative terms
is at most $(2^{n}-1)\sum\limits_{0\leq s\leq t} 2^{ns}$.
\begin{definition}
$\forall j\in \mathbb{Z}$, $k\in \mathbb{Z}^n$, $t\in \mathbb{N}$,
we call $f^t_{j,k}=\{f^{\epsilon}_{j+s, 2^s k+u}\}_{(\epsilon,s,u)\in G_{t,n}}\in F^t_{j,k}$, if
$f^t_{j,k}$ is a non-negative sequence  satisfying the conditions in (\ref{eq:max}).
\end{definition}

According to the basic results in analysis, we have:
\begin{theorem}\label{th:max}
Given $0\leq \alpha<\frac{n}{2}$ and $t\geq 0$. For any non-negative sequence
$\tilde{g}^t_{j,k}=\{g^{\epsilon}_{j+s, 2^s k+u}\}_{(\epsilon,s,u)\in G_{t,n}}$,
there exists at least a sequence $\bar{ f}^{t}_{j,k}
=\{\tilde{f}^{\epsilon}_{j+s, 2^s k+u}\}_{(\epsilon,s,u)\in G_{t,n}}\in F^t_{j,k}$
such that
$$ \tau_{\bar{f}^{t}_{j,k},\tilde{g}^{t}_{j,k} }=
\max\limits_{f^{t}_{j,k}\in F^t_{j,k}} \tau_{f^{t}_{j,k},\tilde{g}^{t}_{j,k} }.$$
\end{theorem}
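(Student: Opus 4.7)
The plan is to recognize this as a routine application of the extreme value theorem, where the only real content is verifying its hypotheses against the somewhat cumbersome setup. First I would note that the index set $G_{t,n}$ has cardinality $(2^n-1)\sum_{s=0}^{t} 2^{ns}$, which is finite. Hence any candidate sequence $f^t_{j,k} = \{f^{\epsilon}_{j+s,2^s k+u}\}_{(\epsilon,s,u)\in G_{t,n}}$ may be identified with a point in the finite-dimensional Euclidean space $\mathbb{R}^{|G_{t,n}|}$, and $F^t_{j,k}$ with a subset thereof.

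Next I would verify that $F^t_{j,k}$ is nonempty, closed, and bounded. Nonemptiness is immediate since the zero sequence satisfies all constraints. The non-negativity conditions $f^{\epsilon}_{j+s,2^s k+u} \geq 0$ define a closed convex cone. Each of the $\sum_{0\leq s\leq t} 2^{ns}$ constraints in (\ref{eq:max}) is of the form ``a positive semidefinite quadratic form in the coordinates is bounded by $1$,'' which cuts out a closed convex set. Intersecting these yields a closed convex subset. For boundedness, the final (coarsest) constraint $2^{nj}\sum_{(\epsilon,s,v)\in G_{t,n}} 2^{2s\alpha}(f^{\epsilon}_{j+s,2^s k+v})^2 \leq 1$ already involves every coordinate of $f^t_{j,k}$ with a strictly positive coefficient (since $\alpha \geq 0$), so every coordinate is bounded in absolute value by $2^{-nj/2}$. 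Thus $F^t_{j,k}$ is compact.

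Finally, for a fixed non-negative sequence $\tilde{g}^t_{j,k}$, the map
\[
f^t_{j,k} \longmapsto \tau_{f^t_{j,k},\tilde{g}^t_{j,k}} = \sum_{(\epsilon,s,u)\in G_{t,n}} f^{\epsilon}_{j+s,2^s k+u}\, g^{\epsilon}_{j+s,2^s k+u}
\]
is a linear functional on $\mathbb{R}^{|G_{t,n}|}$, hence continuous. The extreme value theorem then delivers a maximizer $\bar{f}^t_{j,k} \in F^t_{j,k}$.

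The main obstacle is conceptual rather than technical: once one parses the definitions, the argument reduces to compactness of a finite intersection of ellipsoidal slabs with an orthant, together with continuity of a linear functional. No regularity or convergence analysis is needed here. If later sections require uniqueness of the maximizer or a characterization via Lagrange conditions, one would have to invoke convexity of the constraint set more carefully and examine which constraints are active; but for the existence statement at hand, compactness alone suffices.
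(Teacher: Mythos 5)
Your proof is correct and follows essentially the same approach as the paper's, which simply observes that the finitely many variables are confined to a closed (and, implicitly, bounded) domain so a maximizer exists; you have merely filled in the details (finite index set, closedness, boundedness from the coarsest constraint, continuity of the linear objective). No further comment is needed.
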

\begin{proof}
The $(2^{n}-1)\sum\limits_{0\leq s\leq t} 2^{ns}$ variables
$\{f^{\epsilon}_{j+s, 2^s k+u}\}_{(\epsilon,s,u)\in G_{t,n}}$
of the sequence $f^t_{j,k}$ are restricted in a closed domain,
so the conclusion is obvious.
\end{proof}

\subsection{Micro-local quantities in $P^{\alpha}(\mathbb{R}^{n})$ }

From Proposition \ref{pro:dual}, we know that $(Q^{0}_{\alpha}(\mathbb{R}^{n}))'=P^{\alpha}(\mathbb{R}^{n})$.
To prove  a function $g\in P^{\alpha}(\mathbb{R}^{n})$, we only need to consider
$\sup\limits_{\|f\|_{Q^{0}_{\alpha}}\leq1}\langle f,\ g\rangle,$
where the supremum is taken over all $f\in Q^{0}_{\alpha}$ with $\|f\|_{Q^{0}_{\alpha}}\leq1$. However, by this method,
we can not know the micro-local structure  of $g(x)$ in details.

%To obtain the micro-local information of functions in $P^{\alpha}(\mathbb{R}^{n})$, we apply a new method.

To delete this shortage, we introduce a new method.
Let
$$g(x) = \sum\limits_{(\epsilon,j,k)\in \Lambda_n}
g^{\epsilon}_{j,k} \Phi^{\epsilon}_{j,k}(x).$$
 We localize $g(x)$ by restricting its wavelet coefficients
$g^{\epsilon}_{j,k}$ such that $Q_{j,k}\subset Q$. Then we limit the range of frequencies and analyze
its micro-local information.

For this purpose, we analyze
the function  $g_{t,Q}(x)$ defined in (\ref{ml}).
For such a $g_{t,Q}$,  the number of $(\epsilon,j,k)$ such that
$g^{\epsilon}_{j,k}\neq 0$ is at most $(2^{n}-1)\sum\limits_{0\leq s\leq t} 2^{ns}$.
We study micro-local functions $g_{t,Q}$ in
$P^{\alpha}(\mathbb{R}^{n})$ and obtain  three kinds of
micro-local quantities.

For all $t,j\in \mathbb{Z}, k\in
\mathbb{Z}^{n}$ and $t\geq 0$, we consider the series
$$g^{t}_{j,k}=\Big\{g^{\epsilon}_{j+s,2^{s}k+v}, \epsilon\in E_{n}, 0\leq
s\leq t, v\in \Lambda_{s,n}\Big\}.$$
 Denote
\begin{equation}\label{eq3} g^{t}_{j,k}(x)=\sum\limits_{(\epsilon,s,u)\in
G_{t,n}} g^{\epsilon}_{j+s, 2^{s}k+u}\Phi^{\epsilon}_{j+s,
2^{s}k+u}(x).\end{equation}
Because there is a one-to-one relation between the
sequences $g^{t}_{j,k}$ and the function $g^{t}_{j,k}(x)$,
sometimes, we do not distinguish them.

For simplicity, we suppose that our functions are
real-valued. Let
$$\left\{\begin{array}{cl}
f(x) =& \sum\limits_{(\epsilon,j,k)\in\Lambda_n}
f^{\epsilon}_{j,k} \Phi^{\epsilon}_{j,k}(x);\\
g(x) =&
\sum\limits_{(\epsilon,j,k)\in\Lambda_n} g^{\epsilon}_{j,k}
\Phi^{\epsilon}_{j,k}(x).
\end{array}\right.$$

If $\langle f,g\rangle$ and
$\sum\limits_{(\epsilon,j,k)\in\Lambda_n} f^{\epsilon}_{j,k}
g^{\epsilon}_{j,k}$ are well defined, then we have
\begin{equation}\label{3.1.1}
\tau_{f,g}=: \langle f,\ g\rangle=
\sum\limits_{(\epsilon,j,k)\in\Lambda_n} f^{\epsilon}_{j,k}
g^{\epsilon}_{j,k}.
\end{equation}
To compute $\max\limits_{\|f\|_{Q^0_{\alpha}}\leq 1}\tau_{f,g^t_{j,k}}$,
according to (\ref{3.1.1}),
we can restrict $f$ to the function
$$f^{t}_{j,k}(x)=
\sum\limits_{(\epsilon,s,u)\in G_{t,n}} f^{\epsilon}_{j+s,2^{s}k+u}
\Phi^{\epsilon}_{j+s,2^{s}k+u}(x)$$
 with $\|f^{t}_{j,k}\|_{Q^0_{\alpha}}\leq 1$.
The number of $(\epsilon,j,k)$ such that
$f^{\epsilon}_{j,k}\neq 0$ is at most $(2^{n}-1)\sum\limits_{0\leq s\leq t} 2^{ns}$. That is to say, applying (\ref{3.1.1}),
we transfer the problem to finding out the supremum on infinite restricted
conditions to a maximum value problem on $ \sum\limits_{s=0}^t 2^{ns} $  restricted conditions on the series of quantities $\{f^{\epsilon}_{j+s,2^sk+u}\}_{
(\epsilon,s,u)\in G_{t,n}}.$

Based on Theorem \ref{th:max}, we begin to consider the
micro-local quantities of $g^{t}_{j,k}$ in $P^{\alpha}(\mathbb{R}^{n})$.
\begin{theorem}\label{th4}
Given $0<\alpha<\frac{n}{2}$ and $t\geq 0$. Let %$g(x) = \sum\limits_{(\epsilon,j,k)\in \Lambda_n}
%g^{\epsilon}_{j,k} \Phi^{\epsilon}_{j,k}(x)\in P^{\alpha}(\mathbb{R}^{n})$ and
$g^{t}_{j,k}$ be the function defined by (\ref{eq3})
and $\|g^{t}_{j,k}\|_{P^{\alpha}}>0$.
\begin{itemize}
\item[(i)] There exists a function
$$S f^{t}_{j,k}(x)=
\sum\limits_{(\epsilon,s,u)\in G_{t,n}}S^{t}_{j} f^{\epsilon}_{j+s,2^{s}k+u}
\Phi^{\epsilon}_{j+s,2^{s}k+u}(x)$$ with
$\|S^{t}_{j} f^{t}_{j,k}\|_{Q^0_{\alpha}}\leq 1$
such that
$$\max\limits_{\|f\|_{Q^0_{\alpha}}\leq 1} \tau_{f,g^{t}_{j,k} }=\sum\limits_{(\epsilon,s,u)\in G_{t,n}}
S^{t}_{j} f^{\epsilon}_{j+s,2^{s}k+u}\cdot g^{t}_{j+s, 2^{s}k+u}.$$

\item[(ii)] There exists a positive number $P^{t}_{j}g^{t}_{j,k}$ which is defined
by the absolute values of wavelet coefficient of $g^{t}_{j,k}$ such that
$$P^{t}_{j}g^{t}_{j,k}=\|g^{t}_{j,k}\|_{P^{\alpha}}=
\max\limits_{\|f\|_{Q^0_{\alpha}}\leq 1} \tau_{f,g^{t}_{j,k} }=
\tau_{S f^{t}_{j,k},g^{t}_{j,k} }.$$

\item[(iii)] There exists a sequence $\{Q^{t}_{j}g^{\epsilon}_{j,k}\}_{\epsilon\in E_{n}}$
such that $\sum\limits_{\epsilon\in E_{n}} Q^{t}_{j}g^{\epsilon}_{j,k}\Phi^{\epsilon}_{j,k}(x)$
has the same norm in $P^{\alpha}(\mathbb{R}^{n})$ as $g^{t}_{j,k}$ does.
\end{itemize}
\end{theorem}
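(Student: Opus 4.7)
The plan is to convert the duality formula for $\|g^t_{j,k}\|_{P^\alpha}$ provided by Proposition \ref{pro:dual} into a \emph{finite-dimensional} constrained optimization and then invoke Theorem \ref{th:max}. Concretely, since $(P^\alpha)'=Q^0_\alpha$ only through the dual pairing, one has
$$\|g^t_{j,k}\|_{P^\alpha}=\sup_{\|f\|_{Q^0_\alpha}\leq 1}\langle f,g^t_{j,k}\rangle,$$
and $g^t_{j,k}$ carries at most $(2^n-1)\sum_{s=0}^{t}2^{ns}$ nonzero wavelet coefficients, all indexed by $G_{t,n}$. Hence only the coefficients of $f$ with the same indices contribute to the pairing, and the apparently infinite-dimensional supremum is really a maximum over finitely many variables subject to finitely many active constraints.

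For (i), I would first reduce to $f$ whose wavelet coefficients vanish outside the index set of $g^t_{j,k}$: turning on extra coefficients does not change $\langle f,g^t_{j,k}\rangle$ but can only enlarge $\|f\|_{Q^0_\alpha}$ (equivalently tighten the feasible set). A sign-matching step then replaces $f^\epsilon_{j+s,2^sk+u}$ by $|f^\epsilon_{j+s,2^sk+u}|\operatorname{sgn}(g^\epsilon_{j+s,2^sk+u})$; by Proposition \ref{pro:1} the $Q^0_\alpha$-norm depends only on absolute values of wavelet coefficients, so this is admissible, and the problem becomes the maximization of a bilinear form on non-negative sequences. The key verification is that $\sup_{Q}C_{\alpha,Q}f\leq 1$ collapses exactly to the system (\ref{eq:max}): cubes disjoint from $Q_{j,k}$ give $C_{\alpha,Q}f=0$; cubes strictly finer than the scales present in $g^t_{j,k}$ give $C_{\alpha,Q}f=0$; cubes $Q\supsetneq Q_{j,k}$ yield a constraint weaker than the one at $Q_{j,k}$ (because $|Q|^{\alpha/n-1/2}$ decreases with $|Q|$, using $\alpha<\frac{n}{2}$); and a direct substitution shows that $C_{\alpha,Q_{j+\ell',2^{\ell'}k+u}}f\leq 1$, for $\ell'\in\{0,\ldots,t\}$ and $u\in\Lambda_{\ell',n}$, is precisely the $\ell'$-th row of (\ref{eq:max}). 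Theorem \ref{th:max} then supplies a non-negative maximizer $\bar f^t_{j,k}\in F^t_{j,k}$, and reinstating the signs yields $Sf^t_{j,k}$ with $\|Sf^t_{j,k}\|_{Q^0_\alpha}\leq 1$ realizing the maximum.

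Part (ii) then follows: any $f$ with only finitely many nonzero wavelet coefficients automatically satisfies $\lim_{|Q|\to 0}C_{\alpha,Q}f=0$ and $\lim_{|Q|\to\infty}C_{\alpha,Q}f=0$, so $\|f\|_{Q^0_\alpha}=\|f\|_{Q_\alpha}$ in our finite-dimensional reduction; the duality supremum therefore coincides with the maximum from (i), and defining $P^t_j g^t_{j,k}:=\tau_{\bar f^t_{j,k},|\tilde g^t_{j,k}|}$ gives a positive number that depends only on $\{|g^\epsilon_{j+s,2^sk+u}|\}$, as required. For (iii), I would first record that for any single-scale, single-location wavelet expansion $h(x)=\sum_{\epsilon\in E_n}h^\epsilon\Phi^\epsilon_{j,k}(x)$ the same duality and Cauchy--Schwarz argument (with $C_{\alpha,Q_{j,k}}$ being the only active constraint on a single-scale test function) yields the explicit formula $\|h\|_{P^\alpha}=2^{-jn/2}\bigl(\sum_{\epsilon\in E_n}|h^\epsilon|^2\bigr)^{1/2}$. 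It then suffices to select $\{Q^t_j g^\epsilon_{j,k}\}_{\epsilon\in E_n}$ (for instance, supported on a single $\epsilon\in E_n$) so that $\bigl(\sum_{\epsilon\in E_n}(Q^t_j g^\epsilon_{j,k})^2\bigr)^{1/2}=2^{jn/2}\,P^t_j g^t_{j,k}$, which is possible since $|E_n|=2^n-1\geq 1$. The principal obstacle in the whole argument is the bookkeeping in (i): verifying that the wavelet version of the $Q^0_\alpha$-norm constraint collapses \emph{exactly} to (\ref{eq:max}) with no additional active cubes; once this dyadic accounting is carried out, (ii) and (iii) are short consequences of duality and of the single-scale norm formula.
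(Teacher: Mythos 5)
Your proposal follows essentially the same route as the paper: restrict the dual pairing to wavelet coefficients supported on $G_{t,n}$, align signs to reduce to a non-negative bilinear maximization, observe (via the wavelet characterization of $Q_\alpha$) that the $Q^0_\alpha$-constraint collapses exactly to the finite system (\ref{eq:max}), invoke Theorem \ref{th:max} for a maximizer, and finally use the single-scale norm identity to build $\{Q^t_j g^\epsilon_{j,k}\}$. The only difference is that you spell out the dyadic bookkeeping more explicitly (no extra active cubes, $Q^0_\alpha$ limits automatic for finitely supported $f$), which the paper states more tersely.
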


\begin{proof}
For $g^t_{j,k}=\{g^{\epsilon}_{j+s, 2^s k+u}\}_{(\epsilon,s,u)\in G_{t,n}}$,
denote $\tilde{g}^t_{j,k}=\{|g^{\epsilon}_{j+s, 2^s k+u}|\}_{(\epsilon,s,u)\in G_{t,n}}$.
Denote
$$G^{t,j,k}_{g}=\Big\{(\epsilon,s,u)\in G_{t,n}, g^{\epsilon}_{j+s, 2^{s}k+u}\neq 0\Big\}.$$
For $f^{t}_{j,k}(x)=
\sum\limits_{(\epsilon,s,u)\in G_{t,n}} f^{\epsilon}_{j+s,2^{s}k+u}
\Phi^{\epsilon}_{j+s,2^{s}k+u}(x)$, define
$$f^{\epsilon,g}_{j+s, 2^{s}k+u}=\left\{\begin{array}{cl}
|f^{\epsilon}_{j+s, 2^{s}k+u}|\cdot
|g^{\epsilon}_{j+s, 2^{s}k+u}|^{-1}  \overline{g^{\epsilon}_{j+s, 2^{s}k+u}},
&(\epsilon,s,u)\in G_{t,n};\\
0, & (\epsilon,s,u)\notin G_{t,n}.
\end{array}\right.$$
We denote by $F^{t,j,k}_{g}$ the set
$$\Big\{f^{t}_{j,k}:\ f^{t}_{j,k}(x)=
\sum\limits_{(\epsilon,s,u)\in G_{t,n}} f^{\epsilon,g}_{j+s,2^{s}k+u}
\Phi^{\epsilon}_{j+s,2^{s}k+u}(x) \text{ and } \|f^{t}_{j,k}\|_{Q^0_{\alpha}}\leq 1\Big\}.$$

By (ii) of Proposition \ref{pro:1}, we have
%$$\|f^{t}_{j,k}\|_{Q^0_{\alpha}}\leq 1 \text{ if and only if} \tilde{f}^t_{j,k}\in F^t_{j,k}.$$ Hence
\begin{equation}\label{eq:eq111}
\max\limits_{\|f^{t}_{j,k}\|_{Q^0_{\alpha}\leq 1}} \tau_{\tilde{f}^{t}_{j,k},g^{t}_{j,k} }
=  \max\limits_{f^{t}_{j,k}\in F^{t,j,k}_g} \tau_{f^{t}_{j,k},g^{t}_{j,k} }
=  \max\limits_{\tilde{f}^{t}_{j,k}\in F^{t,j,k}_g } \tau_{\tilde{f}^{t}_{j,k},\tilde{g}^{t}_{j,k} }.
\end{equation}

By the wavelet characterization of $Q_{\alpha}(\mathbb{R}^{n})$,
the condition $\|\tilde{f}^{t}_{j,k}\|_{Q^{0}_{\alpha}\leq 1}$ is equivalent to the conditions
(\ref{eq:max}). Further, for fixed $\tilde{g}^{t}_{j,k}$,
because of (\ref{3.1.1}), if $(\epsilon,s,u)\in G_{t,n}$ and
$(\epsilon,s,u)\notin G^{t,j,k}_{g}$, the coefficients
$f^{\epsilon}_{j+s,2^{s}k+u}$ make no contribution to
$\tau_{\tilde{f}^{t}_{j,k},\tilde{g}^{t}_{j,k}}$.  We get
$$\max\limits_{\tilde{f}^{t}_{j,k}\in F^{t,j,k}_g } \tau_{\tilde{f}^{t}_{j,k},
\tilde{g}^{t}_{j,k} }= \max\limits_{\tilde{f}^{t}_{j,k}\in F^t_{j,k}}
\tau_{\tilde{f}^{t}_{j,k},\tilde{g}^{t}_{j,k} }.$$

According to Theorem \ref{th:max},
there exists at least a sequence
$$\bar{ f}^{t}_{j,k}
=\{\tilde{f}^{\epsilon}_{j+s, 2^s k+u}\}_{(\epsilon,s,u)\in G_{t,n}}\in F^t_{j,k}$$
such that
\begin{equation}\label{eq:m}\tau_{\bar{f}^{t}_{j,k},\tilde{g}^{t}_{j,k} }=
\max\limits_{f^{t}_{j,k}\in F^t_{j,k}} \tau_{f^{t}_{j,k},\tilde{g}^{t}_{j,k} }.
\end{equation}

Let $S f^{t}_{j,k}(x)=
\sum\limits_{(\epsilon,s,u)\in G_{t,n}}S^{t}_{j} f^{\epsilon}_{j+s,2^{s}k+u}
\Phi^{\epsilon}_{j+s,2^{s}k+u}(x)$, where
$$S^{t}_{j} f^{\epsilon}_{j+s,2^{s}k+u}=\left\{\begin{array}{cl}
\tilde{f}^{\epsilon}_{j+s, 2^{s}k+u}
|g^{\epsilon}_{j+s, 2^{s}k+u}|^{-1}  \overline{g^{\epsilon}_{j+s, 2^{s}k+u}},
&\forall (\epsilon,s,u)\in G_{t,n};\\
0, &\forall (\epsilon,s,u)\notin G_{t,n} .
\end{array}\right.$$
According to (\ref{3.1.1}) and (\ref{eq:eq111}), $S f^{t}_{j,k}(x)$ satisfies (i).

Let $P^{t}_{j}g^{t}_{j,k}=\tau_{\bar{f}^{t}_{j,k},\tilde{g}^{t}_{j,k} }$.
According to the last equality in (\ref{eq:eq111}),
$P^{t}_{j}g^{t}_{j,k}$ is defined by the absolute values of wavelet
coefficients of function $g^{t}_{j,k}$.
According to (\ref{3.1.1}), (\ref{eq:eq111}) and (\ref{eq:m}),
$P^{t}_{j}g^{t}_{j,k}$ satisfies (ii).

Denote
$$Q^{t}_{j}g^{\epsilon}_{j,k}=\left\{\begin{array}{cl}
2^{\frac{n}{2}(j-1)} P^{t}_{j}g^{t}_{j,k}, &\mbox{ if } \sum\limits_{\epsilon\in E_n} |g^{\epsilon}_{j,k}|=0;\\
2^{\frac{n}{2}j}P^{t}_{j}g^{t}_{j,k} \Big(\sum\limits_{\epsilon\in E_n} |g^{\epsilon}_{j,k}|^2\Big)^{-\frac{1}{2}}g^{\epsilon}_{j,k} ,
&\mbox{ if } \sum\limits_{\epsilon\in E_n} |g^{\epsilon}_{j,k}|\neq 0.
\end{array}\right.$$
Applying (ii) of Proposition \ref{pro:1} again, we know that
$\{Q^{t}_{j}g^{\epsilon}_{j,k}\}_{\epsilon\in E_n}$ satisfies the condition (iii).
\end{proof}

\begin{remark}
For $\alpha=0$ and $\alpha=\frac{n}{2}$, if we deal with $P^{t}_{j}g^{t}_{j,k}$ in a similar way, then:
\begin{itemize}
\item [(i)] For $\alpha=0$, according to the wavelet characterization of Hardy space $H^1$ in  \cite{Me},
$P^{t}_{j}g^{t}_{j,k}$ can be equivalent to  $$\Big\|\Big(\sum\limits_{(\epsilon,s,u)\in G_{t,n}} 2^{n(j+s)}
|g^{\epsilon}_{j+s,2^sk+u}|^{2} \chi(2^{j+s}\cdot-2^s k-u)\Big)^{\frac{1}{2}}\Big\|_{L^1}.$$
\item [(ii)] For $\alpha=\frac{n}{2}$, $P^{t}_{j}g^{t}_{j,k}$ can be written as
$\Big(\sum\limits_{(\epsilon,s,u)\in G_{t,n}} 2^{-nj} |g^{\epsilon}_{j,k}|^{2}\Big)^{\frac{1}{2}}$.
\end{itemize}
But for $0<\alpha<\frac{n}{2}$, $P^{t}_{j}g^{t}_{j,k}$ can not be expressed in an explicit way.
Luckily, the three parts
$$\left\{\begin{array}{cl}
&\{Q^{t}_{j}g^{\epsilon}_{j,k}\}_{\epsilon\in E_{n}},\\
&S f^{t}_{j,k}=
\sum\limits_{(\epsilon,s,u)\in G_{t,n}}S^{t}_{j} f^{\epsilon}_{j+s,2^{s}k+u}
\Phi^{\epsilon}_{j+s,2^{s}k+u}(x),\\
&P^{t}_{j}g^{t}_{j,k}
\end{array}\right.$$
indicate the micro-local characters in both the
frequency structure and the local structure.
\end{remark}

In the rest of this paper, these three kinds of quantities
obtained will be repeatedly used in the following sections.
Micro-local quantities result in the global information of functions in
$P^{\alpha}(\mathbb{R}^{n})$. In Section 4, this idea
will be used to get the wavelet characterization of
$P^{\alpha}(\mathbb{R}^{n})$ by a group of $L^1$ functions defined by the absolute values of
wavelet coefficients. Such wavelet characterization does not involve the action of a group of Borel measures.

\section{Wavelet characterization of  $P^{\alpha}(\mathbb{R}^{n})$}
In recent years, as the predual of
$Q_{\alpha}(\mathbb{R}^{n})$, $P^{\alpha}(\mathbb{R}^{n})$ was studied by many authors.
 We refer the reader to \cite{CY}, \cite{DX}, \cite{PY},
\cite{Yang}, \cite{YZ} and \cite{YSY} for details.
In this section, we give a new wavelet characterization of
$P^{\alpha}(\mathbb{R}^{n})$ by using the micro-local results obtained in
Section 3.

In the famous book \cite{Me}, Y. Meyer proved
that the Hardy space $H^{1}(\mathbb{R}^{n})$ can be characterized
by some $L^{1}$ function defined by wavelet coefficients
without using a family of Borel measures. Based on the micro-local results in Section 3,
 we give a similar result. We show that each element in
$P^{\alpha}(\mathbb{R}^{n})$ can be characterized by a group of
$L^{1}$ functions $P_{s,t,N}g(x)$. This result will be used to get
a characterization of $P^{\alpha}(\mathbb{R}^{n})$ associated with Riesz transformations in Section 5.

For $s\in \mathbb{Z}$ and $N\in \mathbb{N}$, let
$$\Omega^{s,N}=\Big\{Q\in \Omega: 2^{-sn}\leq |Q|\leq
2^{(N-s)n}\Big\}.$$
For $0\leq t\leq N, m\in\mathbb{Z}^{n}, Q=
Q_{s-N,m}$, define
$$\Omega_{s,t,Q}=\Omega^{t,N}_{s,m}=\Big\{Q'\in
\Omega: 2^{-sn}\leq |Q'|\leq 2^{(t-s)n}, Q'\subset
Q_{s-N,m}\Big\}.$$ We can see that
$\Omega^{s,N}=\bigcup\limits_{m\in \mathbb{Z}^{n}}
\Omega^{N,N}_{s,m}$. For any $s\in\mathbb{Z}$ and $N\in\mathbb{N}$,
we define
\begin{equation}\label{eq-g{s,N}}
g^{N}_{s-N,m}(x) = \sum\limits_{Q_{j,k}\in \Omega^{N,N}_{s,m}}
g^{\epsilon}_{j,k} \Phi^{\epsilon}_{j,k}(x)\end{equation}
and
\begin{equation}\label{eq2-g{s,N}}
g_{s,N}(x)=\sum\limits_{m\in\mathbb{Z}^{n}}
g^{N}_{s-N,m}(x).\end{equation}
 For $g(x)=\sum\limits_{(\epsilon,j,k)\in \Lambda_n} g^{\epsilon}_{j,k}
\Phi^{\epsilon}_{j,k}(x)\in P^{\alpha}(\mathbb{R}^{n})$, let $\Lambda^g_n=\Big\{
(\epsilon,j,k)\in \Lambda_n: g^{\epsilon}_{j,k}\neq 0\Big\}$.

Let $\{f^{\epsilon,g}_{j,k}\}$ be the sequences such that
$$f^{\epsilon,g}_{j,k}=\left\{
\begin{array}{cc}
|f^{\epsilon,g}_{j,k}||g^{\epsilon}_{j,k}|^{-1}
\overline{g^{\epsilon}_{j,k}}, & (\epsilon,j,k)\in \Lambda^g_n; \\
0, & (\epsilon,j,k)\notin \Lambda^g_n.
\end{array}\right.$$
We denote by $Q^{0,g}_{\alpha}$ the set
$$\Big\{f:\ f(x)= \sum\limits_{(\epsilon,j,k)\in \Lambda_n}f^{\epsilon, g}_{j,k}
\Phi^{\epsilon}_{j,k}(x) \text{ and }  \|f\|_{Q^0_{\alpha}}\leq 1\Big\}.$$

 By (\ref{3.1.1}), we have
\begin{equation}\label {eq:g}
\sup\limits_{\|f\|_{Q^0_{\alpha}}\leq 1} \tau_{f,g}=
\sup\limits_{f\in Q^{0,g}_{\alpha} } \tau_{f,g}.
\end{equation}

 We prove first an approximation lemma on $P^{\alpha}(\mathbb{R}^n)$.
\begin{lemma}\label{appr}
For $g(x) = \sum\limits_{(\epsilon,j,k)\in \Lambda_n}
g^{\epsilon}_{j,k} \Phi^{\epsilon}_{j,k}(x)\in P^{\alpha}(\mathbb{R}^n)$,
let
$$\tilde{g}_{s,N}(x)=\sum\limits_{|m|\leq 2^n}
g^{N}_{s-N,m}(x).$$ For arbitrary $\delta>0$, there exist $s$ and $N$
such that $\|g -\tilde{g}_{s,N} \|_{P^{\alpha}}\leq \delta$.
\end{lemma}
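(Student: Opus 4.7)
My plan is to reduce the approximation to a finite sum of wavelet atoms via the atomic decomposition of $P^{\alpha}(\mathbb{R}^{n})$, and then truncate the wavelet expansions of those finitely many atoms. By Proposition \ref{pro:CZ}, write $g = \sum_{u} \lambda_{u} g_{u}$ where each $g_{u}$ is an $(\alpha,2)$-wavelet atom on a dyadic cube $Q_{u}$ and $\sum_{u} |\lambda_{u}| < \infty$. Given $\delta > 0$, first choose $U$ so that $2\sum_{u > U} |\lambda_{u}| < \delta / 2$.

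Since $\{Q_{u}\}_{u \leq U}$ is a finite family of dyadic cubes, I can pick $N \in \mathbb{N}$ and $s \in \mathbb{Z}$ with $N - s$ large enough that $|Q_{u}| \leq 2^{(N-s)n}$ and $Q_{u} \subset \bigcup_{|m| \leq 2^{n}} Q_{s-N, m}$ for every $u \leq U$. These conditions force every wavelet coefficient $(g_{u})^{\epsilon}_{j,k}$ (for $u \leq U$) to automatically satisfy the coarse-scale and spatial constraints defining $\tilde{g}_{s,N}$; the only coefficients of $g_{u}$ cut by the restriction are the fine-scale ones with $j > s$. Set
$$\eta_{u,s} = \Big(\sum_{Q_{j,k} \subset Q_{u},\, j > s} 2^{-2j\alpha}|(g_{u})^{\epsilon}_{j,k}|^{2}\Big)^{1/2};$$
the fine-scale tail of $g_{u}$ is $\eta_{u,s} |Q_{u}|^{1/2 - \alpha/n}$ times a wavelet atom on $Q_{u}$, so its $P^{\alpha}$-norm is at most $\eta_{u,s} |Q_{u}|^{1/2 - \alpha/n}$. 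Convergence of the defining $\ell^{2}_{\alpha}$ sum in the wavelet-atom condition gives $\eta_{u,s} \to 0$ as $s \to \infty$, so $s$ can be taken large (while keeping $N - s$ large) so that $\sum_{u \leq U} |\lambda_{u}| \, \eta_{u,s} |Q_{u}|^{1/2 - \alpha/n} < \delta / 2$.

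By linearity of the wavelet expansion, $\tilde{g}_{s,N} = \sum_{u} \lambda_{u} \tilde{g}_{u,s,N}$, where $\tilde{g}_{u,s,N}$ is the corresponding truncation of $g_{u}$. For $u > U$, both $\tilde{g}_{u,s,N}$ and $g_{u} - \tilde{g}_{u,s,N}$ are obtained by restricting $g_{u}$'s wavelet expansion to subsets of its coefficients, hence are themselves wavelet atoms on $Q_{u}$ and each has $P^{\alpha}$-norm at most $1$. Assembling the pieces,
$$\|g - \tilde{g}_{s,N}\|_{P^{\alpha}} \leq \sum_{u \leq U} |\lambda_{u}| \, \eta_{u,s} |Q_{u}|^{1/2 - \alpha/n} + 2\sum_{u > U} |\lambda_{u}| < \delta.$$
The main delicacy lies in ordering the three choices ($U$, then $N - s$, then $s$) so that the finitely many atoms' fine-scale tails can be sent to zero \emph{after} the size and location parameters have been fixed; once this ordering is set, each bound follows directly from the $\ell^{2}_{\alpha}$-type estimate in the definition of wavelet atoms.
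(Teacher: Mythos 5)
Your proof is correct and follows essentially the same strategy as the paper's: decompose $g$ into $(\alpha,2)$-wavelet atoms, cut the $\ell^1$ tail of the coefficients, enlarge $N-s$ so that the finitely many remaining cubes $Q_u$ fit inside $\bigcup_{|m|\le 2^n}Q_{s-N,m}$ with $|Q_u|\le 2^{(N-s)n}$, and then push $s$ up so the fine-scale tails of those atoms are small. The paper packages the error terms through auxiliary functions $g_\delta, g_{1,\delta}, g_{2,\delta}$ rather than bounding the truncation of each head atom directly as you do, but the substance — the ordering of the three choices and the use of the $\ell^2_\alpha$ wavelet-atom condition to control the fine-scale tail — is the same.
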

\begin{proof}
For any $0<\delta < \frac{\|g \|_{P^{\alpha}}}{8}$,
according to $(\alpha,2)-$wavelet atom decomposition, there exists
$\{\lambda_{u}\}_{u\in \mathbb{N_+}}\in l^1$ and a group of $(\alpha,2)-$wavelet atom $a_u(x)$
such that $g=\sum\limits_{u\in \mathbb{N_+}} \lambda_u a_u(x)$ and
$$\left|\sum\limits_{u\in \mathbb{N}} |\lambda_u|- \|g \|_{P^{\alpha}}\right|\leq \frac{\delta}{8}.$$
Further there exists integer $N_{\delta}>0$ such that
\begin{equation}\label{lambda}
\sum\limits_{u>N_{\delta}} |\lambda_u|\leq \frac{\delta}{8}.
\end{equation}

Now, for $u=1,\cdots, N_{\delta}$, we consider atoms
$$a_{u}(x)=
\sum\limits_{(\epsilon,j,k)\in \Lambda_{n}, Q_{j,k}\subset Q_u} a^{\epsilon,u}_{j,k}
\Phi^{\epsilon}_{j,k}(x).$$
 Since
$$\Big(\sum\limits_{(\epsilon,j,k)\in \Lambda_{n},  Q_{j,k}\subset Q_u} 2^{-2j\alpha} |a^{\epsilon,u}_{j,k}|^{2}\Big)
^{\frac{1}{2}} \leq |Q_u|^{\frac{\alpha}{n}-\frac{1}{2}},$$
then there exists integer $\tilde{N}_{\delta}>0$ such that
\begin{equation}\label {fin}
\Big(\sum\limits_{(\epsilon,j,k)\in \Lambda_{n},  Q_{j,k}\subset Q_u, j>\tilde{N}_{\delta}}
2^{-2j\alpha} |a^{\epsilon,u}_{j,k}|^{2}\Big)
^{\frac{1}{2}} \leq \frac{\delta }{16  \|g \|_{P^{\alpha}}} |Q_u|^{\frac{\alpha}{n}-\frac{1}{2}}.
\end{equation}
Since $1\leq u\leq N_{\delta}$, there exists an integer $j_{\delta}\in \mathbb{Z}$ such that
\begin{equation}\label {union}
\bigcup \limits_{1\leq u\leq N_{\delta}} Q_u \subset  \bigcup \limits_{|m|\leq 2^n}
Q_{j_{\delta},m}.
\end{equation}

For $u=1,\cdots, N_{\delta}$, let $b_{u}(x)=
\sum\limits_{(\epsilon,j,k)\in \Lambda_{n}, Q_{j,k}\subset Q_u,
j\leq \tilde{N}_{\delta}} a^{\epsilon,u}_{j,k}
\Phi^{\epsilon}_{j,k}(x)$.
According to (\ref{lambda}) and (\ref{fin}), we know that
\begin{equation}\label{12}
\begin{array}{cl}
&\Big\|\sum\limits_{u> N_{\delta}} \lambda_u a_u \Big\|_{P^{\alpha}}\leq \frac{\delta}{8};\\
&\Big\|\sum\limits_{1\leq u\leq N_{\delta}} \lambda_u (a_u-b_u)\Big\|_{P^{\alpha}}
\leq \sum\limits_{1\leq u\leq N_{\delta}} |\lambda_u| \frac{\delta }{16  \|g \|_{P^{\alpha}}} \leq \frac{\delta}{8}.
\end{array}\end{equation}
Let $$g_{\delta}(x)= \sum\limits_{u> N_{\delta}} \lambda_u a_u + \sum\limits_{1\leq u\leq N_{\delta}} \lambda_u (a_u-b_u)
= \sum\limits_{(\epsilon,j,k)\in \Lambda_n} g^{\epsilon,\delta}_{j,k} \Phi^{\epsilon}_{j,k}(x).$$
Then $\|g_{\delta}(x)\|_{P^{\alpha}}\leq \frac{\delta}{4}$.
Let $$g_{1,\delta}(x) = \sum\limits_{(\epsilon,j,k)\in \Lambda_n, j\geq N_{\delta},
Q_{j,k}\subset  \bigcup \limits_{|m|\leq 2^n} Q_{j_{\delta},m} }
g^{\epsilon,\delta}_{j,k} \Phi^{\epsilon}_{j,k}(x)$$ and
$g_{2,\delta}(x)=g_{\delta}(x)- g_{1,\delta}(x).$ According to (\ref{eq:g}), we have
$\|g_{1,\delta}(x)\|_{P^{\alpha}}\leq \frac{\delta}{4}$ and $\|g_{2,\delta}(x)\|_{P^{\alpha}}\leq \frac{\delta}{4}$.

Take $s=\tilde{N}_{\delta}$ and $N=s-j_{\delta}$ .
Let $\tilde{g}_{s,N}(x)= g_{1,\delta}(x)+\sum\limits_{1\leq u\leq N_{\delta}} \lambda_u b_u$.
According to the above construction process, $\tilde{g}_{s,N}(x)$ satisfies the condition of Lemma \ref{appr}.
\end{proof}
 Given $0\leq t\leq N$, $m\in\mathbb{Z}^{n}$ and $Q=Q_{s-N,m}$. If $t=0$, we denote
$$g^{\epsilon,s,t,N}_{j, k}=\left\{
\begin{array}{cc}
0, & j>s; \\
g^{\epsilon}_{j,k}, & j= s .
\end{array}\right.$$
For $t\geq 1$ and for $Q^{t}_{j} g^{\epsilon}_{j,k}$ defined in Theorem \ref{th4}, we denote
$$g^{\epsilon,s,t,N}_{j, k}=\left\{
\begin{array}{cc}
0, &j>s-t;\\
Q^{t}_{j} g^{\epsilon}_{j,k}, & j=s-t;\\
g^{\epsilon}_{j,k}, & j< s-t.
\end{array}\right.$$
Denote $g_{s,t,N}(x)=\sum\limits_{\epsilon,j,k}
g^{\epsilon,s,t,N}_{j,k} \Phi^{\epsilon}_{j,k}(x)$. We define
$$P_{s,t,N}g(x) =
\Big(\sum \limits_{\epsilon,Q_{j,k}\in \Omega^{s,N}, j\leq s-t}
2^{nj} |g^{\epsilon,s,t,N}_{j,k}|^{2} \chi
(2^{j}x-k)\Big)^{\frac{1}{2}},$$
$$Q_{s,t,N}g= \Big\|
\Big(\sum \limits_{\epsilon,Q_{j,k}\in \Omega^{s,N},j= s-t} 2^{jn}
|g^{\epsilon,s,t,N}_{j,k}|^{2} \chi
(2^{j}\cdot-k)\Big)^{\frac{1}{2}}\Big\|_{L^{1}}.$$
For $t=N$, we have \begin{equation}\label{eq:QP}
Q_{s,N,N}g^{N}_{s-N,m}=\|P_{s,N,N}g^{N}_{s-N,m}\|_{L^{1}}.\end{equation}

Now  we prove a wavelet characterization without involving Borel measures.
\begin{theorem}\label{cha}
If $0<\alpha<\frac{n}{2}$, then
$$P^{\alpha}(\mathbb{R}^{n})= \Big\{ g: \sup\limits_{s\in \mathbb{Z},N\in
\mathbb{N}} \min\limits_{0\leq t\leq N} \|P_{s,t,N}g\|_{L^{1}}
<\infty\Big\}.$$
\end{theorem}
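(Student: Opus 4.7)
The plan is to prove both inclusions using the duality $(Q^0_{\alpha})'=P^{\alpha}(\mathbb{R}^n)$ from Proposition~\ref{pro:dual}(ii) together with the micro-local analysis of Theorem~\ref{th4}.

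For the inclusion ``$\supseteq$'' I would assume $C_0:=\sup_{s,N}\min_{0\leq t\leq N}\|P_{s,t,N}g\|_{L^1}<\infty$ and bound $|\langle f,g\rangle|\lesssim C_0$ for every $f\in Q^0_{\alpha}$ of norm one. By Lemma~\ref{appr} I may replace $g$ by the finitely supported $\tilde g_{s,N}$ and prove the pairing estimate uniformly in $s,N$. Fixing the optimal $t=t(s,N)$ I split, inside each cube $Q=Q_{s-N,m}$ supporting $\tilde g_{s,N}$, the wavelet expansion into a coarse part at scales $j<s-t$ and a fine part at scales $s-t\leq j\leq s$. For the fine part I would regroup the coefficients lying inside each $Q_{s-t,k}$ into the micro-local block $g^t_{s-t,k}$; Theorem~\ref{th4}(ii) then yields $|\langle f,g^t_{s-t,k}\rangle|\leq P^t_{s-t}g^t_{s-t,k}$, and Theorem~\ref{th4}(iii) identifies $\sum_k P^t_{s-t}g^t_{s-t,k}$ with the freezing-scale summand of $\|P_{s,t,N}g\|_{L^1}$ via the identity $\sum_\epsilon|Q^t_{s-t}g^\epsilon_{s-t,k}|^2=2^{n(s-t)}(P^t_{s-t}g^t_{s-t,k})^2$. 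The coarse part is estimated by a standard $H^1$--$BMO$-type pairing of the wavelet square function $\bigl(\sum_{j<s-t}2^{nj}|g^\epsilon_{j,k}|^2\chi(2^jx-k)\bigr)^{1/2}$ against $f$, using Proposition~\ref{prop1}(ii).

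For the inclusion ``$\subseteq$'' I would take $g\in P^{\alpha}$ and exhibit, for each $(s,N)$, a value $t=t(s,N)\in\{0,\ldots,N\}$ such that $\|P_{s,t,N}g\|_{L^1}\lesssim\|g\|_{P^{\alpha}}$. The crucial observation is that $P^t_{s-t}g^t_{s-t,k}=\|g^t_{s-t,k}\|_{P^{\alpha}}$ by Theorem~\ref{th4}(ii), so the freezing-scale part of $P_{s,t,N}g$ is governed by an honest $P^{\alpha}$-norm and therefore is \emph{subadditive} in $g$. I would decompose $g=\sum_u\lambda_u a_u$ into $(\alpha,2)$-wavelet atoms on cubes $Q_u$ using Proposition~\ref{pro:CZ}(i), partition the atoms by the dyadic side-length of $Q_u$, and choose $t$ so that the freezing scale $s-t$ is aligned with the dominant atomic scale lying inside each $Q_{s-N,m}$. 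At that scale the wavelet-atom normalization gives $\|g^t_{s-t,k}\|_{P^{\alpha}}\lesssim 1$ per atom, and combining this with Proposition~\ref{prop1}(ii) converts the freezing-scale square function into the desired $L^1$-bound. The residual scale ranges---where the atomic mass lies below $2^{-s}$ or above $2^{s-N}$---are absorbed by the boundary choices $t=N$ and $t=0$ respectively.

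The main obstacle is the nonlinearity of the individual coefficients $Q^t_j g^\epsilon_{j,k}$, which are defined by the constrained maximization of Theorem~\ref{th:max} and are therefore not additive in $g$; this is incompatible with any linear decomposition of $g$ into atoms. My plan is to circumvent this obstruction by systematically arguing at the level of the scalar quantity $P^t_{s-t}g^t_{s-t,k}$, which is a $P^{\alpha}$-norm and thus obeys the triangle inequality, and only passing to the $Q^t_j$ coefficients at the final step to recognize the expression as a piece of the concrete square function $P_{s,t,N}g$. The forward direction additionally needs a careful accounting of the pairing at the coarse scales $j<s-t$, which must be arranged so that the $H^1$-type contribution from scales below the freezing scale interacts correctly with the $Q^0_{\alpha}$-structure of the test function $f$.
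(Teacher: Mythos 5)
Your ``$\supseteq$'' direction is conceptually on the same track as the paper's, even though the paper phrases it via the chain of norm inequalities in (4.0.4) rather than via a direct pairing estimate. The algebraic identity you isolate, $\sum_{\epsilon}|Q^{t}_{s-t}g^{\epsilon}_{s-t,k}|^{2}=2^{n(s-t)}\bigl(P^{t}_{s-t}g^{t}_{s-t,k}\bigr)^{2}$, is exactly what turns $\sum_{k}P^{t}_{s-t}g^{t}_{s-t,k}$ into $Q_{s,t,N}g\le\|P_{s,t,N}g\|_{L^{1}}$, and combining Theorem~\ref{th4}(ii) on each block with an $H^{1}$--$BMO$ pairing for scales $j<s-t$ gives the required estimate for the truncation $g_{s,N}$. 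One caution: Lemma~\ref{appr} is stated for $g\in P^{\alpha}(\mathbb{R}^n)$, so you cannot invoke it to replace $g$ by $\tilde g_{s,N}$ when $g\in P^{\alpha}$ is the \emph{conclusion} you are after; the repair is to prove the uniform bound $\|g_{s,N}\|_{P^{\alpha}}\le 2\min_{t}\|P_{s,t,N}g\|_{L^{1}}$ for all $(s,N)$ and then pass to the limit weakly, which is the role (4.0.4) plays in the paper.

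The ``$\subseteq$'' direction, however, diverges from the paper's proof and has a genuine gap. The paper does not decompose $g$ into atoms at this stage; it uses Lemma~\ref{appr} together with (4.0.1)--(4.0.2) to reduce, modulo $\delta$, to a $g_{s,N}$ whose $P^{\alpha}$-mass lives in the finitely many cells $Q_{s-N,m}$ with $|m|\le 2^{n}$, and then takes the extremal choice $t=N$, for which (4.0.3) gives the exact cell-by-cell identity $\|P_{s,N,N}g^{N}_{s-N,m}\|_{L^{1}}=\|g^{N}_{s-N,m}\|_{P^{\alpha}}$. Your plan instead tries to select $t$ so that the freezing scale is ``aligned with the dominant atomic scale lying inside each $Q_{s-N,m}$'', but $t$ is a \emph{single} integer; atoms of different scales sitting in different cells force different $t$, and no single choice serves them all. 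Concretely, an atom $a_{u}$ with support cube $Q_{u}$ at scale $j_{u}\ll s-N$ is split across $\sim 2^{n(s-N-j_{u})}$ cells $Q_{s-N,m}$; for $t=N$ the freezing-scale sum $\sum_{m}\|(a_{u})^{N}_{s-N,m}\|_{P^{\alpha}}$ grows like $2^{\alpha(s-N-j_{u})}$, while for $t=0$ the $H^{1}$ square-function norm grows like $2^{\alpha(s-j_{u})}$, so neither boundary choice is uniformly bounded in $u$, contrary to what the last sentence of your sketch asserts. It is precisely the truncation and localization step from Lemma~\ref{appr} that prevents this scenario in the paper's argument, and your plan omits it.
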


\begin{proof}

%Applying (ii)   of Proposition \ref{pro:1}, we know that functions
%$$\sum\limits_{|m|\leq 2^n}\sum\limits_{Q_{j,k}\in \Omega^{N,N}_{s,m}}
%f^{\epsilon}_{j,k} \Phi^{\epsilon}_{j,k}(x)$$
%in $Q_{\alpha}$ are dense in $Q^0_{\alpha}$.

%Since $g(x)=\big [ g(x)- g_{s,N}(x)\big ] + g_{s,N}(x)$ and $g_{s,N}(x)=\sum\limits_{|m|\leq 2^n}
%g^{N}_{s-N,m}(x)+ \sum\limits_{|m|> 2^n}
% g^{N}_{s-N,m}(x)$,
According to (\ref{eq:g}) and Lemma \ref{appr},
$\forall \delta>0$, there exists $\tau_{\delta}>0$ such that for
$s>\tau_{\delta}, N\geq 2s$, we have
\begin{equation}\label{4.0.1}
\|g_{s,N}-g\|_{P^{\alpha}}+
\sum\limits_{|m|> 2^{n}} \| g^{N}_{s-N,m}\|_{P^{\alpha}}\leq \delta
\end{equation}
and
\begin{equation}
\label{4.0.2}
8^{-n} \max\limits_{|m|\leq 2^{n}} \|
g^{N}_{s-N,m}\|_{P^{\alpha}}- \delta \leq
\|g_{s,N}\|_{P^{\alpha}} \leq \sum\limits_{|m|\leq 2^{n}} \|
g^{N}_{s-N,m}\|_{P^{\alpha}}+ \delta,
\end{equation}
where $g_{s,N}$ and $g^{N}_{s-N,m}$ are defined by (\ref{eq-g{s,N}}) and (\ref{eq2-g{s,N}}).

By (\ref{eq:QP}) and Theorem \ref{th4}, we have
\begin{equation}\label{4.0.3}
\|g^{N}_{s-N,m}\|_{P^{\alpha}}
= Q_{s,N,N}g^{N}_{s-N,m}=\|P_{s,N,N}g^{N}_{s-N,m}\|_{L^{1}}.
\end{equation}
Furthermore, we have
\begin{equation}\label{4.0.4}
\|g_{s,t,N}\|_{P^{\alpha}} \leq \|g_{s,t,N}\|_{H^{1}}=\|P_{s,t,N}g\|_{L^{1}}.
\end{equation}
According to (\ref{4.0.1})-(\ref{4.0.4}),  the proof of Theorem \ref{cha} is completed.
\end{proof}

\section{Fefferman-Stein type decomposition of $Q$-spaces}
In this section,  by the wavelet
characterization obtained in  Theorem \ref{cha}, we give a
Fefferman-Stein type decomposition of $Q_{\alpha}(\mathbb{R}^{n})$.

In \cite{EJPX},  the authors proved that
$Q_{\alpha}(\mathbb{R}^{n})\subset BMO(\mathbb{R}^{n})$.
That is to say, the Hardy space $H^{1}(\mathbb{R}^{n})$ is contained in
$P^{\alpha}(\mathbb{R}^{n})$.
\begin{proposition}\label{php}
If $0<\alpha<\frac{n}{2}$, then $H^{1}(\mathbb{R}^{n})\subset
P^{\alpha}(\mathbb{R}^{n})$.
\end{proposition}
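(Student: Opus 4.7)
The plan is to apply the wavelet characterization of $P^{\alpha}(\mathbb{R}^n)$ in Theorem \ref{cha} with the simplest possible choice of $t$, namely $t=0$, and to compare against the wavelet square-function characterization of $H^1(\mathbb{R}^n)$ given by Proposition \ref{prop1}(ii). The essential point is that the $\min$ over $t$ in the characterization of $P^{\alpha}$ lets us sidestep the micro-local quantities $Q^{t}_{j}g^{\epsilon}_{j,k}$ altogether and work with the raw wavelet coefficients.

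Concretely, let $g=\sum_{(\epsilon,j,k)\in\Lambda_n} g^{\epsilon}_{j,k}\Phi^{\epsilon}_{j,k}\in H^1(\mathbb{R}^n)$ and fix $s\in\mathbb{Z}$, $N\in\mathbb{N}$. Unwinding the definition at $t=0$ — together with the convention (enforced by the identity $\|g_{s,t,N}\|_{H^1}=\|P_{s,t,N}g\|_{L^1}$ used in the proof of Theorem \ref{cha}) that $g^{\epsilon,s,0,N}_{j,k}$ vanishes off $\Omega^{s,N}$ and agrees with $g^{\epsilon}_{j,k}$ otherwise — one obtains
\begin{equation*}
P_{s,0,N}g(x)=\Big(\sum_{\epsilon\in E_n}\sum_{s-N\leq j\leq s}\sum_{k\in\mathbb{Z}^n} 2^{nj}|g^{\epsilon}_{j,k}|^2\chi(2^j x-k)\Big)^{1/2},
\end{equation*}
which is pointwise majorized by the full $H^1$ square function of $g$. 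Integrating and invoking Proposition \ref{prop1}(ii) yields $\|P_{s,0,N}g\|_{L^1}\leq C\|g\|_{H^1}$, uniformly in $(s,N)$. Since $\min_{0\leq t\leq N}\|P_{s,t,N}g\|_{L^1}\leq\|P_{s,0,N}g\|_{L^1}$, the supremum appearing in Theorem \ref{cha} is at most $C\|g\|_{H^1}<\infty$, and hence $g\in P^{\alpha}(\mathbb{R}^n)$.

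There is essentially no real obstacle once the conventions are pinned down: the power of Theorem \ref{cha} is precisely that the $\min$ over $t$ is realized at $t=0$ for $H^1$ functions, reducing the embedding to a one-line pointwise inequality between square functions. The micro-local machinery of Section 3, which is needed to characterize general elements of $P^{\alpha}(\mathbb{R}^n)$, plays no role here — it is only used to handle frequencies that would otherwise have to be rescaled by the $(\alpha,2)$-weights $2^{2j\alpha}$ in the $Q_{\alpha}$ side of the duality, and for $H^1$ functions these frequencies are already integrable thanks to the full square-function estimate.
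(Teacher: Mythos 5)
Your argument is correct, once one accepts the convention you flag: the paper's displayed definition of $g^{\epsilon,s,0,N}_{j,k}$ omits the case $j<s$, but consistency with the $t\geq 1$ case (and with the identity $\|g_{s,t,N}\|_{H^1}=\|P_{s,t,N}g\|_{L^1}$ in the proof of Theorem \ref{cha}) forces $g^{\epsilon,s,0,N}_{j,k}=g^{\epsilon}_{j,k}$ there, so $P_{s,0,N}g$ is indeed the truncated square function and the pointwise majorization by the full $H^1$ square function closes the argument. The route is, however, different from the paper's. The paper states the proposition with no proof, introducing it with the remark that \cite{EJPX} proved $Q_{\alpha}(\mathbb{R}^n)\subset BMO(\mathbb{R}^n)$; the intended reading is the immediate dualization via Proposition \ref{pro:dual}: for $g\in H^1(\mathbb{R}^n)$ and $f\in Q^0_{\alpha}(\mathbb{R}^n)$, one has $|\langle f,g\rangle|\leq C\|f\|_{BMO}\|g\|_{H^1}\leq C\|f\|_{Q_{\alpha}}\|g\|_{H^1}$, so the functional $f\mapsto\langle f,g\rangle$ lies in $(Q^0_{\alpha})'=P^{\alpha}(\mathbb{R}^n)$ and is identified with $g$. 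An even more elementary route uses the wavelet atom definition together with Proposition \ref{pro:CZ}(i): if $g$ is a $(0,2)$-wavelet atom supported on a dyadic cube $Q$, then $Q_{j,k}\subset Q$ forces $2^{-2j\alpha}\leq|Q|^{2\alpha/n}$, whence $\bigl(\sum 2^{-2j\alpha}|g^{\epsilon}_{j,k}|^2\bigr)^{1/2}\leq|Q|^{\alpha/n}\bigl(\sum|g^{\epsilon}_{j,k}|^2\bigr)^{1/2}\leq|Q|^{\alpha/n-1/2}$, so $g$ is also an $(\alpha,2)$-wavelet atom, giving $H^1(\mathbb{R}^n)=P^0_w(\mathbb{R}^n)\subset P^{\alpha}_w(\mathbb{R}^n)=P^{\alpha}(\mathbb{R}^n)$. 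Your derivation through Theorem \ref{cha} is heavier machinery than either of these, but it is not circular (Theorem \ref{cha} is established in Section 4, before Proposition \ref{php} appears), and it has the pedagogical merit of making explicit that the $\min$ over $t$ in that characterization is already uniformly finite at $t=0$ for $H^1$ functions, which is what the abstract statement $H^1\subset P^{\alpha}$ hides.
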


Further, we point out, for Theorem \ref{th:neq}, we need some special properties
of Daubechies wavelets. Except for Theorem \ref{th:neq},
we use the classical Meyer wavelets $\Phi^{\epsilon}(x)$
throughout Section 5 and Section 6. The support of the Fourier transform of
the classical Meyer wavelet in \cite{Me} satisfies the following conditions
\begin{equation}\label{wavelet condition}
\left\{
\begin{array}{cc}
&\text{ supp }\widehat{\Phi^{0}}(\xi)\subset [-\frac{4\pi}{3}, \frac{4\pi}{3}]; \\
&\text{ supp }\widehat{\Phi^{1}}(\xi)\subset [-\frac{8\pi}{3}, \frac{8\pi}{3}]
\backslash (-\frac{2\pi}{3}, \frac{2\pi}{3}).
\end{array}\right.\end{equation}
For tensor product  Meyer wavelet satisfying (\ref{wavelet condition}),
$\forall (\epsilon,j,k)$, $(\epsilon',j',k')$ $\in \Lambda_n$
and $|j-j'|\geq 2$, we have
\begin{equation}\label{5.0}
\langle R_i\Phi^{\epsilon}_{j,k},\
\Phi^{\epsilon'}_{j',k'}\rangle=0, \forall i=1,\cdots,n.
\end{equation}

\subsection{Adapted $L^{1}$ and $L^{\infty}$ spaces}
For $g(x)=\sum\limits_{(\epsilon,j,k)\in \Lambda_{n}}g^{\epsilon}_{j,k}
\Phi^{\epsilon}_{j,k}(x)$ and $j\in \mathbb{Z}$, denote
\begin{equation}\label{Qj}
Q_{j}g(x)= \sum\limits_{\epsilon\in E_n,k\in \mathbb{Z}^n}
g^{\epsilon}_{j,k}\Phi^{\epsilon}_{j,k}(x).\end{equation}
$\forall s\in \mathbb{Z}, N\in \mathbb{N}$, we set
\begin{equation}\label{sN}
P_{s,N}g(x)=\sum\limits_{\epsilon,s-N\leq j\leq s,k}
g^{\epsilon}_{j,k}\Phi^{\epsilon}_{j,k}(x).
\end{equation}
For all integers $t$ from 0 to $
N$, denote
$$T^{1}_{s,t,N}g(x)= \sum\limits_{\epsilon,s-t\leq j\leq
s,k} g^{\epsilon}_{j,k} \Phi^{\epsilon}_{j,k}(x)$$
 and
$$T^{2}_{s,t,N}g(x)= \sum\limits_{\epsilon,s-N\leq j< s-t,k}
g^{\epsilon}_{j,k} \Phi^{\epsilon}_{j,k}(x).$$
By Theorem \ref{cha}, we introduce a space $\tilde{P}^{\alpha}(\mathbb{R}^{n})$.
\begin{definition}Given $\alpha\in[0,\frac{n}{2})$.
$g(x)\in \tilde{P}^{\alpha}(\mathbb{R}^{n})$ if
$$\sup\limits_{s\in \mathbb{Z}, N\in
\mathbb{N}} \inf \limits_{0\leq t\leq N}
(\|T^{1}_{s,t,N}g\|_{P^{\alpha}}+\|T^{2}_{s,t,N}g\|_{H^{1}})
<\infty.$$
\end{definition}
This space is not really new. In fact,
\begin{theorem} \label{new}
(i) If $\alpha=0$, then $P^{0}(\mathbb{R}^{n})=\tilde{P}^{0}(\mathbb{R}^{n})=H^{1}(\mathbb{R}^{n})$.

(ii) If $0<\alpha<\frac{n}{2}$, then $P^{\alpha}(\mathbb{R}^{n})=\tilde{P}^{\alpha}(\mathbb{R}^{n})$.
\end{theorem}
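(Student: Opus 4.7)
The plan is to transfer norm bounds via the decomposition $P_{s,N}g=T^{1}_{s,t,N}g+T^{2}_{s,t,N}g$ and then pass to a weak-$*$ limit in $P^{\alpha}=(Q^{0}_{\alpha})'$ (Proposition~\ref{pro:dual}(ii)), so that $g\in P^{\alpha}$ whenever its band-limited projections are uniformly bounded in $P^{\alpha}$.

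For part~(ii), the inclusion $P^{\alpha}\subseteq\tilde{P}^{\alpha}$ follows by taking $t=N$ in the definition of $\tilde{P}^{\alpha}$: then $T^{2}_{s,N,N}g=0$ and $T^{1}_{s,N,N}g=P_{s,N}g$. If $g=\sum_{u}\lambda_{u}a_{u}$ is an $(\alpha,2)$-wavelet atom decomposition, each $P_{s,N}a_{u}$ is again an $(\alpha,2)$-wavelet atom on the same cube (restricting the coefficient range to $s-N\leq j\leq s$ only shrinks the weighted square-sum while leaving the support unchanged), so using $P^{\alpha}=P^{\alpha}_{w}$ (Proposition~\ref{pro:CZ}) gives $\|P_{s,N}g\|_{P^{\alpha}}\lesssim\|g\|_{P^{\alpha}}$, hence $g\in\tilde{P}^{\alpha}$. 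For the converse $\tilde{P}^{\alpha}\subseteq P^{\alpha}$, fix $s,N$ and pick $t=t(s,N)$ realizing the infimum up to a factor $2$. The inclusion $H^{1}\subset P^{\alpha}$ from Proposition~\ref{php} then yields
\begin{equation*}
\|P_{s,N}g\|_{P^{\alpha}}\leq\|T^{1}_{s,t,N}g\|_{P^{\alpha}}+C\|T^{2}_{s,t,N}g\|_{H^{1}}\leq C',
\end{equation*}
uniformly in $s$ and $N$. With $\{P_{s,N}g\}$ uniformly bounded in the dual space $(Q^{0}_{\alpha})'$, Banach--Alaoglu provides a sequence $(s_{n},N_{n})$ with $s_{n}\to\infty$ and $s_{n}-N_{n}\to-\infty$ along which $P_{s_{n},N_{n}}g\to g^{*}$ weak-$*$ in $P^{\alpha}$, with $\|g^{*}\|_{P^{\alpha}}\leq C'$.

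To identify $g^{*}$ with $g$, I test against individual wavelets $\Phi^{\epsilon}_{j,k}$: these belong to $Q^{0}_{\alpha}$ because $C_{\alpha,Q}\Phi^{\epsilon}_{j,k}$ equals $|Q|^{\alpha/n-1/2}2^{j\alpha}$ when $Q_{j,k}\subset Q$ and vanishes otherwise, so the prefactor handles $|Q|\to\infty$ (since $\alpha<n/2$), while for $|Q|$ sufficiently small $Q_{j,k}\not\subset Q$. For $n$ large enough that $s_{n}-N_{n}\leq j\leq s_{n}$, the pairing $\langle\Phi^{\epsilon}_{j,k},P_{s_{n},N_{n}}g\rangle$ equals $g^{\epsilon}_{j,k}=\langle\Phi^{\epsilon}_{j,k},g\rangle$, so $g^{*}$ and $g$ share the same wavelet coefficients, forcing $g^{*}=g\in P^{\alpha}$. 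Part~(i) follows from the same template using $P^{0}=H^{1}$: the step $H^{1}\subset P^{\alpha}$ becomes trivial and the $(\alpha,2)$-wavelet atom decomposition is replaced by Meyer's wavelet characterization of $H^{1}$ (Proposition~\ref{prop1}(ii)). The main obstacle I anticipate is the identification step: one must verify that the wavelets lie in $Q^{0}_{\alpha}$, that the weak-$*$ limit genuinely reproduces the prescribed wavelet coefficients, and that distinct elements of $P^{\alpha}$ are separated by their wavelet coefficients through the atomic characterization.
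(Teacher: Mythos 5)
Your proof is correct, and the overall structure (take $t=N$ for the forward inclusion; triangle inequality plus $H^{1}\subset P^{\alpha}$ for the reverse) matches the paper. The two substantive differences are worth noting. For the forward inclusion you justify $\|P_{s,N}g\|_{P^{\alpha}}\lesssim\|g\|_{P^{\alpha}}$ by projecting $(\alpha,2)$-wavelet atoms (a correct argument, since the atom condition is a monotone weighted $\ell^{2}$-sum and the support cube is unchanged), whereas the paper asserts the sharper $\|P_{s,N}g\|_{P^{\alpha}}\leq\|g\|_{P^{\alpha}}$, which rests on the monotonicity-in-absolute-coefficients observation later recorded in Lemma \ref{le:1t}(ii) and the duality formula (\ref{eq:g}); both are fine, yours is perhaps cleaner to state. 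For the reverse inclusion the paper, having obtained the uniform bound $\sup_{s,N}\|P_{s,N}g\|_{P^{\alpha}}<\infty$, simply invokes (\ref{4.0.1}) to conclude $g\in P^{\alpha}$; but (\ref{4.0.1}) is itself derived (via Lemma \ref{appr}) under the standing hypothesis $g\in P^{\alpha}$, so as written the citation is circular. Your Banach--Alaoglu argument, combined with the verification that each $\Phi^{\epsilon}_{j,k}\in Q^{0}_{\alpha}$ and that the weak-$*$ limit reproduces all wavelet coefficients of $g$, is the correct and complete way to close that gap. (One cosmetic point: Banach--Alaoglu a priori yields a convergent subnet; to extract a sequence one should note that $Q^{0}_{\alpha}$ is separable, which it is since finite wavelet sums are dense there by Proposition \ref{pro:1}(ii).)
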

\begin{proof} $P^{0}(\mathbb{R}^{n})=H^{1}(\mathbb{R}^{n})$ is known, so (i) is evident. Now we consider the cases
$0<\alpha<\frac{n}{2}$. If $g(x)\in P^{\alpha}(\mathbb{R}^{n})$, then
$\|P_{s,N}g\|_{P^{\alpha}}\leq \|g\|_{P^{\alpha}}$. Further
$$\inf \limits_{0\leq t\leq N}
(\|T^{1}_{s,t,N}g\|_{P^{\alpha}}+\|T^{2}_{s,t,N}g\|_{H^{1}})\leq
\|T^{1}_{s,N,N}g\|_{P^{\alpha}}=\|P_{s,N}g\|_{P^{\alpha}}.$$
Hence
$$\sup\limits_{s\in \mathbb{Z}, N\in \mathbb{N}} \inf
\limits_{0\leq t\leq N}
(\|T^{1}_{s,t,N}g\|_{P^{\alpha}}+\|T^{2}_{s,t,N}g\|_{H^{1}})
\leq \|g\|_{P^{\alpha}}.$$

Conversely, we have
\begin{eqnarray*}
\|P_{s,N}g\|_{P^{\alpha}}&\leq&
\|T^{1}_{s,t,N}g\|_{P^{\alpha}}+\|T^{2}_{s,t,N}g\|_{P^{\alpha}}\\
&\leq&
\|T^{1}_{s,t,N}g\|_{P^{\alpha}}+\|T^{2}_{s,t,N}g\|_{H^{1}}.\end{eqnarray*}
Hence
$$\|P_{s,N}g\|_{P^{\alpha}}\leq \inf \limits_{0\leq t\leq N}
(\|T^{1}_{s,t,N}g\|_{P^{\alpha}}+\|T^{2}_{s,t,N}g\|_{H^{1}}).$$
According to (\ref{4.0.1}), if $g(x)\in\tilde{P}^{\alpha}(\mathbb{R}^{n})$, then $g(x)\in
P^{\alpha}(\mathbb{R}^{n})$.
\end{proof}

Theorem \ref{new} tells us that the norm of a function $g(x)$ in
$P^{\alpha}(\mathbb{R}^{n})$ is equivalent to $$\sup\limits_{s\in \mathbb{Z}, N\in
\mathbb{N}} \inf \limits_{0\leq t\leq N}
(\|T^{1}_{s,t,N}g\|_{P^{\alpha}}
+\|T^{2}_{s,t,N}g\|_{H^{1}}).$$ That is to say, for
$0<\alpha<\frac{n}{2}$, the  higher
frequency part $T^{1}_{s,t,N}g(x)$ and the lower frequency part
$T^{2}_{s,t,N}g(x)$ make different contributions to the norm. Now we
use such property to construct $L^{1,\alpha}(\mathbb{R}^{n})$ and
$L^{\infty,\alpha}(\mathbb{R}^{n})$ which will be adapted to Fefferman-stein
decomposition.

Let $f(x)=\sum\limits_{(\epsilon,j,k)\in \Lambda_{n}}f^{\epsilon}_{j,k}
\Phi^{\epsilon}_{j,k}(x)$. For $s,t,N\in \mathbb{Z}$ and $ 0\leq t\leq N$,
we denote
\begin{eqnarray*}
P_{s,N}f(x)&=&\sum\limits_{\epsilon,s-N\leq j\leq s,k}
f^{\epsilon}_{j,k}\Phi^{\epsilon}_{j,k}(x),\\
S^{1}_{s,t,N}f(x)&=&
\sum\limits_{\epsilon,s-t\leq j\leq s,k} f^{\epsilon}_{j,k}
\Phi^{\epsilon}_{j,k}(x),\\
S^{2}_{s,t,N}f(x)&=&\sum\limits_{\epsilon,s-N\leq j< s-t,k} f^{\epsilon}_{j,k}
\Phi^{\epsilon}_{j,k}(x).\end{eqnarray*}

The spaces $L^{1,\alpha}(\mathbb{R}^{n})$ and $L^{\infty,\alpha}(\mathbb{R}^{n})$ are defined as follows.
\begin{definition}\label{L}
Given $f(x)=\sum\limits_{(\epsilon,j,k)\in \Lambda_{n}}f^{\epsilon}_{j,k}
\Phi^{\epsilon}_{j,k}(x)$ and $g(x)=\sum\limits_{(\epsilon,j,k)\in \Lambda_{n}}
g^{\epsilon}_{j,k} \Phi^{\epsilon}_{j,k}(x)$.

(i) We say that $g(x)\in L^{1,\alpha}(\mathbb{R}^{n})$ if
$$ \sup\limits_{s\in \mathbb{Z},N\in \mathbb{N}}
\min\limits_{0\leq t\leq N} \Big(\|T^{1}_{s,t,N}g\|_{ P^{\alpha}}
+\|T^{2}_{s,t,N}g\|_{ L^{1}}\Big) <\infty.$$

(ii) We say that $f(x) \in L^{\infty,\alpha}(\mathbb{R}^{n})$ if
$$\sup\limits_{s\in \mathbb{Z},N\in \mathbb{N}}
\sup\limits_{0\leq t\leq N} \Big(\|S^{1}_{s,t,N}f\|_{Q_{\alpha}}
+\|S^{2}_{s,t,N}f\|_{ L^{\infty}} \Big)<\infty.$$
\end{definition}

By Proposition \ref{php} and Theorem \ref{new}, we have
\begin{theorem}\label{subspace}
Given $0\leq \alpha<\frac{n}{2}$.
\begin{itemize}
\item[(i)] $P^{\alpha}(\mathbb{R}^{n})\subset L^{1,\alpha}(\mathbb{R}^{n})$;
\item[(ii)] $L^{\infty,\alpha}(\mathbb{R}^{n})=
Q_{\alpha}(\mathbb{R}^{n})\bigcap L^{\infty}(\mathbb{R}^{n})$;

\item[(iii)] $(L^{1,\alpha}(\mathbb{R}^{n}))'= L^{\infty,\alpha}(\mathbb{R}^{n})$.
\end{itemize}
\end{theorem}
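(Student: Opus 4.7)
For part (i), my plan is to apply Theorem \ref{new}(ii) to identify $\|g\|_{P^\alpha}$ with its equivalent micro-local form $\sup_{s,N}\inf_{t}(\|T^1_{s,t,N}g\|_{P^\alpha}+\|T^2_{s,t,N}g\|_{H^1})$, and then invoke the continuous embedding $H^1(\mathbb{R}^n)\hookrightarrow L^1(\mathbb{R}^n)$ to dominate the $L^1$-piece of the $L^{1,\alpha}$ quasi-norm by the corresponding $H^1$-piece. Since the $\min_t$ in the definition of $\|g\|_{L^{1,\alpha}}$ runs over the same finite set of $t$'s, the termwise bound $\|T^2_{s,t,N}g\|_{L^1}\leq C\|T^2_{s,t,N}g\|_{H^1}$ yields $\|g\|_{L^{1,\alpha}}\leq C\|g\|_{P^\alpha}$. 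A cleaner one-line version: choose $t=N$ in the $\min$, so that $T^2_{s,N,N}g=0$ and $T^1_{s,N,N}g=P_{s,N}g$, giving $\|g\|_{L^{1,\alpha}}\leq\sup_{s,N}\|P_{s,N}g\|_{P^\alpha}\leq\|g\|_{P^\alpha}$.

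For part (ii), I would prove both inclusions using the boundedness of Meyer wavelet bandpass projections. For the inclusion $\supset$, given $f\in Q_\alpha\cap L^{\infty}$, the characterization in Proposition \ref{pro:1} expresses $\|f\|_{Q_\alpha}$ as $\sup_{Q}C_{\alpha,Q}f$, which is monotone under restricting wavelet coefficients to the frequency band $[s-t,s]$; hence $\|S^1_{s,t,N}f\|_{Q_\alpha}\leq\|f\|_{Q_\alpha}$. Simultaneously, because the tensor-product Meyer wavelet has smooth, compactly supported Fourier cutoffs, the bandpass $S^2_{s,t,N}$ is bounded on $L^\infty$, so $\|S^2_{s,t,N}f\|_{L^\infty}\leq C\|f\|_{L^\infty}$. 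Taking $\sup$ over $s,N,t$ yields $f\in L^{\infty,\alpha}$. Conversely, for $f\in L^{\infty,\alpha}$: specializing $t=N$ (so $S^1_{s,N,N}f=P_{s,N}f$, $S^2=0$) then letting $s,N\to\infty$ gives $\|f\|_{Q_\alpha}\leq\|f\|_{L^{\infty,\alpha}}$; specializing $t=0$ and passing to a weak-$\ast$ limit as $s,N\to\infty$ gives $\|f\|_{L^\infty}\leq\|f\|_{L^{\infty,\alpha}}$.

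For part (iii), the heart is a pairing argument. Because Meyer wavelets at different levels are orthogonal and $S^i$, $T^i$ act on matching frequency bands, for $f\in L^{\infty,\alpha}$ and $g\in L^{1,\alpha}$ we have
\[
\langle f,P_{s,N}g\rangle=\langle S^1_{s,t,N}f,T^1_{s,t,N}g\rangle+\langle S^2_{s,t,N}f,T^2_{s,t,N}g\rangle.
\]
Applying the duality $(P^\alpha)'=Q_\alpha$ from Proposition \ref{pro:dual} to the first term and the classical $(L^1)'=L^\infty$ duality to the second, together with the elementary estimate $a_1b_1+a_2b_2\leq\max(a_1,a_2)(b_1+b_2)$, yields
\[
|\langle f,P_{s,N}g\rangle|\leq\|f\|_{L^{\infty,\alpha}}\bigl(\|T^1_{s,t,N}g\|_{P^\alpha}+\|T^2_{s,t,N}g\|_{L^1}\bigr).
\]
Minimizing in $t$, supping in $s,N$, and passing to the limit $P_{s,N}g\to g$ gives the embedding $L^{\infty,\alpha}\hookrightarrow(L^{1,\alpha})'$. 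The main obstacle will be the converse inclusion $(L^{1,\alpha})'\subset L^{\infty,\alpha}$: a Hahn--Banach extension shows that any $\Lambda\in(L^{1,\alpha})'$ restricts to a continuous functional on $P^\alpha$ (using part (i)) and to continuous functionals on the frequency-localized $L^1$-subspaces of $L^{1,\alpha}$, giving representatives $f_1\in Q_\alpha$ and $f_2\in L^\infty$ respectively. The delicate point will be verifying $f_1=f_2$ as a single distribution by testing against common wavelet atoms in overlapping frequency regions, so that a single $f$ simultaneously witnesses the $Q_\alpha$ bound on the high-frequency parts and the $L^\infty$ bound on the low-frequency parts, uniformly in $s,t,N$.
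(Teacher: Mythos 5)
The paper offers no argument for this theorem beyond the one-line remark that it ``can be deduced from Proposition \ref{php} and Theorem \ref{new},'' so there is no proof in the paper to compare against. Your parts (i) and (ii) are sound. For (i), both your routes work: the first reproduces the natural deduction (replace $H^1$ by $L^1$ in the $\tilde P^\alpha$-norm using $H^1\hookrightarrow L^1$), and the cleaner $t=N$ route is also valid once one knows $\|P_{s,N}g\|_{P^\alpha}\le\|g\|_{P^\alpha}$, a monotonicity fact the paper uses freely and which follows from the absolute-value characterization in the proof of Lemma \ref{le:1t}(ii). For (ii), your two ingredients are correct: monotonicity of $C_{\alpha,Q}$ under dropping wavelet coefficients gives $\|S^1_{s,t,N}f\|_{Q_\alpha}\le\|f\|_{Q_\alpha}$, and the uniform $L^\infty$-boundedness of $S^2_{s,t,N}$ comes from the uniform $L^\infty$-bound on the Meyer scaling projection (the paper uses exactly this in Subcase 6.3.1 for the operator $P_{j_0}$). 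The forward embedding in (iii) is also correct.

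The genuine gap is the converse in (iii), which you acknowledge but do not close. Two points. First, a small terminological slip: Hahn--Banach is not needed to pass from $\Lambda\in(L^{1,\alpha})'$ to $\Lambda|_{P^\alpha}$; since $P^\alpha\hookrightarrow L^{1,\alpha}$ by (i), restriction already gives a bounded functional, hence an $f\in Q_\alpha$ with $\|f\|_{Q_\alpha}\le C\|\Lambda\|$. Second, and more substantively, your appeal to ``continuous functionals on the frequency-localized $L^1$-subspaces'' presupposes the key estimate
\[
\|g\|_{L^{1,\alpha}}\le C\|g\|_{L^1}\quad\text{for every $g$ supported on a finite frequency band},
\]
with $C$ independent of the band. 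This is not automatic: for a given $(s',N')$ the $\min_{t'}$ in $\|g\|_{L^{1,\alpha}}$ may be forced to put the top scale of $g$ into the $\|T^1\cdot\|_{P^\alpha}$ piece, so one needs the single-scale bound $\|Q_jg\|_{P^\alpha}\le C\|g\|_{L^1}$, which is exactly Lemma \ref{le:1t}(i) combined with Proposition \ref{php} ($H^1\subset P^\alpha$). Once that estimate is in place, the self-adjointness and uniform $L^1$-boundedness of $S^2_{s,t,N}$ for Meyer wavelets upgrade the frequency-localized bound to $\|S^2_{s,t,N}f\|_{L^\infty}\le C\|\Lambda\|$ uniformly in $(s,t,N)$, and together with $\|S^1_{s,t,N}f\|_{Q_\alpha}\le\|f\|_{Q_\alpha}$ this gives $f\in L^{\infty,\alpha}$. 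Your worry about ``$f_1=f_2$'' is a non-issue: both candidates have wavelet coefficients $\Lambda(\Phi^\epsilon_{j,k})$, so they coincide as distributions.
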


\begin{remark}
For the case $\alpha=0$, we have:
\begin{itemize}
\item[(i)] $P^{0}(\mathbb{R}^{n})=H^{1}(\mathbb{R}^{n})$ and $Q_{0}(\mathbb{R}^{n})=BMO(\mathbb{R}^{n})$;
\item[(ii)] $L^{1,0}(\mathbb{R}^{n})=L^{1}(\mathbb{R}^{n})$ and $L^{\infty,0}(\mathbb{R}^{n})=L^{\infty}(\mathbb{R}^{n})$.
\end{itemize}
\end{remark}

Now, we use  Daubechies  wavelets to prove that $L^{\infty,\alpha}(\mathbb{R}^{n})
\varsubsetneq  Q_{\alpha}(\mathbb{R}^{n})$. We know that there exist some integer $M$
and Daubechies scale function $\Phi^{0}(x)\in C^{n+2}_{0} ([-2^{M}, 2^{M}]^{n})$
satisfying
\begin{equation}\label{5.1}
C_{D}=\int \frac{-y_{1}}{|y|^{n+1}}
\Phi^{0}(y-2^{M+1}e) dy<0,\end{equation}
where $e= (1,1,\cdots, 1)$.
\begin{theorem} \label{th:neq}
Let $\Phi(x)= \Phi^{0}(x-2^{M+1}e)$ and let
$f(x)$ be defined as
\begin{equation}\label{5.2}
f(x)=\sum\limits_{j\in 2\mathbb{N}} \Phi(2^{j}x).
\end{equation}
If $0\leq \alpha<\frac{n}{2}$, then
$f(x)\in L^{\infty,\alpha}(\mathbb{R}^{n})$ and $R_{1}f(x)\notin L^{\infty}(\mathbb{R}^{n}).$ That
is to say, $L^{\infty,\alpha}(\mathbb{R}^{n}) \varsubsetneq  Q_{\alpha}(\mathbb{R}^{n})$.
\end{theorem}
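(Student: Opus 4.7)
The plan is to verify three facts in sequence: $f \in L^\infty(\mathbb{R}^n)$, $f \in Q_\alpha(\mathbb{R}^n)$, and $R_1 f \notin L^\infty(\mathbb{R}^n)$. Combining the first two with Theorem \ref{subspace}(ii), which identifies $L^{\infty,\alpha}(\mathbb{R}^n) = Q_\alpha(\mathbb{R}^n)\cap L^\infty(\mathbb{R}^n)$, gives $f \in L^{\infty,\alpha}(\mathbb{R}^n)$. For the strict inclusion, since $R_1 \in CZO(N_0)$, Proposition \ref{pro:CZ}(ii) gives boundedness of $R_1$ on $P^\alpha(\mathbb{R}^n)$, hence on $Q_\alpha(\mathbb{R}^n)$ by the duality in Proposition \ref{pro:dual}(i). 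Therefore $R_1 f \in Q_\alpha(\mathbb{R}^n) \setminus L^\infty(\mathbb{R}^n) \subset Q_\alpha(\mathbb{R}^n) \setminus L^{\infty,\alpha}(\mathbb{R}^n)$, proving the strict inclusion.

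For $f \in L^\infty$, the support of $\Phi(2^j\cdot)$ is $2^{-j}[2^M, 3\cdot 2^M]^n$; for $j_1 < j_2$ in $2\mathbb{N}$ one has $j_2 \ge j_1 + 2$, whence $3\cdot 2^{M-j_2} \le \tfrac{3}{4}\cdot 2^{M-j_1} < 2^{M-j_1}$, so these cubes are pairwise disjoint and $\|f\|_\infty \le \|\Phi^0\|_\infty$. For $f \in Q_\alpha$, I would verify (\ref{eq:111}) directly. For each cube $I$, split the double integral according to which of the bumps $B_j := \mathrm{supp}\,\Phi(2^j\cdot)$ contain $x$ and $y$. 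After the change of variables $u = 2^j x$, $v = 2^j y$, the same-bump contribution at scale $j$ equals $2^{j(2\alpha-n)}$ times the $\dot H^\alpha$-seminorm squared of $\Phi^0$, which is finite since $\Phi^0 \in C^m_0$ with $m > 8n > \alpha$; summing over $j \ge j_0$ (where $j_0$ indexes the coarsest bump meeting $I$) yields $\lesssim 2^{j_0(2\alpha-n)}$, and the outer weight $|I|^{2\alpha/n-1} \lesssim 2^{j_0(n-2\alpha)}$ exactly absorbs this factor. Cross-bump contributions use $|f(x)-f(y)| \le 2\|\Phi^0\|_\infty$ together with the geometric separation $|x-y| \gtrsim 2^{M-\min(j_1,j_2)-2}$ between distinct bumps, and the resulting integrals again sum geometrically.

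The decisive step is $R_1 f \notin L^\infty$. Since $R_1$ commutes with dilations, $R_1 f(x) = \sum_{j\in 2\mathbb{N}}(R_1\Phi)(2^j x)$ almost everywhere, with absolute pointwise convergence at $x \ne 0$ from the $|y|^{-n}$ decay of $R_1\Phi$ (consequence of $\Phi$ being compactly supported with $\int \Phi \ne 0$). Because $\mathrm{supp}\,\Phi$ is bounded away from $0$, no principal value is needed to compute $(R_1\Phi)(0)$, and (\ref{5.1}) gives $(R_1\Phi)(0) = c_n C_D$, nonzero with a definite sign (where $c_n > 0$ is the Riesz kernel constant). Since $R_1\Phi$ is smooth in a neighborhood of $0$, there exists $\delta > 0$ with $|(R_1\Phi)(y) - c_n C_D| \le |c_n C_D|/2$ for $|y| \le \delta$. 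Evaluated at $x = te$ with $t > 0$ small, the indices $j \in 2\mathbb{N}$ with $2^j t \le \delta$ number $\sim \tfrac{1}{2}\log_2(\delta/t)$, and each contributes a value of fixed sign and magnitude $\gtrsim |c_n C_D|$; the tail $\sum_{2^j t > \delta}(R_1\Phi)(2^j t e)$ is bounded independently of $t$ by a convergent geometric series. Hence $|R_1 f(te)| \gtrsim \log_2(1/t) \to \infty$ as $t \to 0^+$. The main obstacle is the $Q_\alpha$-membership of $f$: the cube $I$ can contain, straddle, or lie inside many bumps at widely different scales (the centers $2^{M+1-j}e$ accumulate at the origin), so the case analysis must balance the singular diagonal behavior against the gain from the outer weight; an alternative route is the wavelet characterization Proposition \ref{pro:1}(i), using $\Phi(2^j\cdot) = 2^{-jn/2}\Phi^0_{j,2^{M+1}e} \in V_j$ to describe the wavelet coefficients of $f$ at scales strictly below $j$.
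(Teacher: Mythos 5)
Your architecture matches the paper's: show $f\in L^\infty$, show $f\in Q_\alpha$, invoke Theorem \ref{subspace}(ii) to get $f\in L^{\infty,\alpha}$, show $R_1f\notin L^\infty$, and use boundedness of $R_1$ on $Q_\alpha$ to conclude $R_1f\in Q_\alpha\setminus L^{\infty,\alpha}$. The disjoint-supports argument for $f\in L^\infty$ is the same as the paper's. Your treatment of $R_1f\notin L^\infty$ is in fact more careful than the paper's: the paper merely observes $R_1\Phi(2^{2j}x)<C_D/2<0$ on $2^{2j}|x|<\delta$ and then asserts the conclusion, whereas you explicitly justify $R_1f(x)=\sum_j(R_1\Phi)(2^jx)$ via commutation with dilations, count $\sim\tfrac12\log_2(\delta/t)$ same-sign contributions at $x=te$, and bound the tail by the $|y|^{-n}$ decay of $R_1\Phi$; this closes steps the paper leaves implicit.

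The one genuine gap is in the $Q_\alpha$-membership. The paper proceeds through the wavelet characterization (Proposition \ref{pro:1}(i)): it bounds $|f^{\epsilon'}_{j',k'}|\lesssim 2^{nj'/2}$ for $j'<0$ and, using orthogonality $V_j\perp W_{j'}$ for $j\le j'$ to kill the large-scale bumps, $|f^{\epsilon'}_{j',k'}|\lesssim 2^{-nj'/2}$ for $j'\ge 0$, with only $O(1)$ nonzero $k'$ per scale; then $C_{\alpha,Q}f$ is summed directly. You instead estimate the integral (\ref{eq:111}), and the step \emph{``summing over $j\ge j_0$ yields $\lesssim 2^{j_0(2\alpha-n)}$, and the outer weight $|I|^{2\alpha/n-1}\lesssim 2^{j_0(n-2\alpha)}$ absorbs this''} is not always valid: if $I$ is a tiny cube sitting strictly \emph{inside} $B_{j_0}$ (the bumps are disjoint, so $I$ meets only $B_{j_0}$), then $|I|\ll 2^{-nj_0}$ and the inequality $|I|^{2\alpha/n-1}\lesssim 2^{j_0(n-2\alpha)}$ fails. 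In that regime the same-bump contribution must be bounded by the \emph{local} integral over $2^{j_0}I\times 2^{j_0}I$ rather than by the full $\dot H^\alpha$-seminorm of $\Phi^0$; after rescaling this is precisely $|2^{j_0}I|^{2\alpha/n-1}\iint_{(2^{j_0}I)^2}|\Phi^0(u)-\Phi^0(v)|^2|u-v|^{-n-2\alpha}\,du\,dv$, controlled because a fixed $C^m_0$ function belongs to $Q_\alpha$. You gesture at this (``the case analysis must balance the singular diagonal behavior against the gain from the outer weight'') and offer the wavelet route as an alternative, but as written the direct estimate does not close this case. Patching it with the local/rescaled estimate, or replacing it by the paper's wavelet-coefficient computation, completes the argument.
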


\begin{proof} For $j,j'\in 2\mathbb{N}, j\neq j'$,  the
supports of $\Phi(2^{j}x)$ and  $\Phi(2^{j'}x)$ are
disjoint. Hence the above $f(x)$ in (\ref{5.2}) belongs to $L^{\infty}(\mathbb{R}^{n})$.
The same reasoning gives, for any $j'\in \mathbb{N}$,
$$\sum\limits_{j\in \mathbb{N}, 2j> j'} \Phi(2^{2j}x)\in
L^{\infty}(\mathbb{R}^{n}).$$ Now we compute the wavelet coefficients of $f(x)$ in (\ref{5.2}).
For $(\epsilon',j',k')\in \Lambda_{n}$, let $f^{\epsilon'}_{j',k'}=
\langle f,\ \Phi^{\epsilon'}_{j',k'}\rangle$. We distinguish
two cases: $j'<0$ and $j'\geq 0$.

For $j'<0$, since the support of $f(x)$ is contained in $[-3 \cdot
2^{M}, 3 \cdot 2^{M}]^{n}$, we know that if $|k'|> 2^{2M+5}$, then
$f^{\epsilon'}_{j',k'}=0$. If $|k'|\leq 2^{2M+5}$, we have
$$|f^{\epsilon'}_{j',k'}| \leq C 2^{\frac{nj'}{2}} \int |f(x)| dx
\leq C 2^{\frac{nj'}{2}}.$$ For $j'\geq 0$, by orthogonality of the
wavelets, we have
$$f^{\epsilon'}_{j',k'}= \Big\langle f,\ \Phi^{\epsilon'}_{j',k'}\Big\rangle
= \Big\langle \sum\limits_{j\in \mathbb{N}, 2j> j'} \Phi(2^{2j}\cdot),\
\Phi^{\epsilon'}_{j',k'}\Big\rangle.$$
By the same reason,
for the case $j'\geq0$, we know that if $|k'|> 2^{2M+5}$, then
$f^{\epsilon'}_{j',k'}=0$. Since $\sum\limits_{j\in \mathbb{N}, 2j>
j'} \Phi(2^{2j}x)\in L^{\infty}$, if $|k'|\leq 2^{2M+5}$, we
have $$|f^{\epsilon'}_{j',k'}| \leq C\int
|\Phi^{\epsilon'}_{j',k'}(x)| dx \leq C 2^{-\frac{nj'}{2}}.$$ By the
above estimation of wavelet coefficients of $f(x)$ and by the wavelet
characterization of $Q$-spaces, we conclude that $f(x)\in Q_{\alpha}(\mathbb{R}^{n})$.
That is to say,
$$f(x)\in Q_{\alpha}(\mathbb{R}^{n})\cap L^{\infty}(\mathbb{R}^{n}).$$

Since $\Phi^{0}(x)\in C^{n+2}_{0}([-2^{M}, 2^{M}]^{n})$, we know that
$$\Phi(x)= \Phi^{0}(x-2^{M+1}e)\in C^{n+2}_{0}([2^{M}, 3 \cdot 2^{M}]^{n}).$$
Further,  if $|x|\leq 2^{M-1}$ and $y\in [2^{M},
3 \cdot 2^{M}]^{n} $, then $|x-y|> 2^{M-1}$. Hence $R_{1}\Phi(x)$ is
smooth in the ball $\{x:\ |x|\leq 2^{M-1}\}$.

Applying (\ref{5.1}), there exists a
positive $\delta>0$ such that for $|x|<\delta$,
$R_{1}\Phi(x)<\frac{C_{D}}{2}<0.$ That is to say, if
$2^{2j}|x|<\delta$, then $R_{1}\Phi(2^{2j}x)<\frac{C_{D}}{2}<0$.
Hence $$R_{1}f(x)\notin L^{\infty}(\mathbb{R}^{n}).$$ \end{proof}

\subsection{ Fefferman-Stein decomposition of $Q_{\alpha}(\mathbb{R}^{n})$}
C. Fefferman and E. M. Stein used Riesz transformations and $L^{1}$ norm to
characterize Hardy space $H^{1}(\mathbb{R}^{n})$, see \cite{FS}:
\begin{thm}\label{th:L^1}
$g(x)\in H^{1}(\mathbb{R}^{n})$ if and only if $$C_{Riesz}(g)= \|g(x)\|_{
L^{1}(\mathbb{R}^{n})}+ \sum\limits_{ i=1} ^{n} \|R_{i}g(x)\|_{ L^{1}(\mathbb{R}^{n})}<\infty.$$
\end{thm}

Theorem \ref{th:L^1} results in the solving of Fefferman-Stein decomposition of $BMO(\mathbb{R}^{n})$. The following theorem is a similar result for $P^{\alpha}(\mathbb{R}^{n})$ and it extends Theorem \ref{th:L^1}.
If $\alpha=0$, Theorem \ref {thL1} becomes Theorem \ref {th:L^1}, so we omit the proof of this case.
The proof for the cases $0< \alpha<\frac{n}{2}$ is very long.
So we  only state  this result  here and  postpone the proof to Section 6.
For $0\leq \alpha<\frac{n}{2}$ and function $g(x)$, denote
$$C_{\alpha,Riesz}(g)= \|g(x)\|_{L^{1,\alpha}}+ \sup\limits_{s\in \mathbb{Z}, N\in \mathbb{N}}
\min\limits_{0\leq t\leq N} \sum\limits_{i=1}^{n} \{\| T^{1}_{s,t,N}R_{i}g(x)\|_{P_{\alpha}}+
\| T^{2}_{s,t,N}R_{i}g(x)\|_{L^{1}}\}.$$
We have
\begin{theorem} \label{thL1}
If $0\leq \alpha<\frac{n}{2}$, then $g(x)\in
P^{\alpha}(\mathbb{R}^{n})$ if and only if
\begin{equation}\label{Riesz.c}
C_{\alpha,Riesz}(g)<\infty.\end{equation}
\end{theorem}

\begin{remark}\label{open}

For the cases $0< \alpha<\frac{n}{2}$, the authors \cite{EJPX} list several open problems about $Q_{\alpha}(\mathbb{R}^{n})$.
Their open problems attract a lot of attention and have been studied extensively  by many authors. See \cite{DX}, \cite{PY}, \cite{WX}, \cite{Yang},
\cite{YSY} and the references therein.  In the open problem 8.3, the authors \cite{EJPX} ask what would be the suitable subspaces of
$Q_{\alpha}(\mathbb{R}^{n})$ in which Fefferman-Stein decomposition is valid.

Unlike the situation of $BMO(\mathbb{R}^{n})$,  $L^{\infty}$ does not belong to $Q_{\alpha}(\mathbb{R}^{n})$. By duality,
$P^{\alpha}(\mathbb{R}^{n})$ is not a subspace of $L^{1}(\mathbb{R}^{n})$ either.
 If we want to solve the problem 8.3 by Fefferman-Stein's idea, we need to establish a relation between $P^{\alpha}(\mathbb{R}^{n})$ and the functions in $L^{1}(\mathbb{R}^{n})$. For this purpose, we should apply much more skills. For example, in Section 3, we  consider the micro-local property of $P_{\alpha}(\mathbb{R}^{n})$.
In Subsection 5.1, we construct the space $L^{1,\alpha}(\mathbb{R}^{n})$ by Meyer wavelets.

 \end{remark}

If Theorem \ref{thL1} holds, by Theorem \ref{th:neq},
we could obtain a Fefferman-Stein type decomposition of $Q_{\alpha}(\mathbb{R}^{n})$ using Fefferman-Stein's skill in \cite{FS}.
This result solves Problem \ref{p1} (Problem 8.3 in \cite{EJPX}).

\begin{theorem}\label{th2} If $0\leq \alpha<\frac{n}{2}$, then
$f(x)\in Q_{\alpha}(\mathbb{R}^{n})$ if and only if $f(x)=\sum\limits_{0\leq i\leq
n}R_{i}f_{i}(x)$, where $f_{i}(x)\in Q_{\alpha}(\mathbb{R}^{n})\bigcap L^{\infty}(\mathbb{R}^{n})$.
\end{theorem}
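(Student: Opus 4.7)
The plan is to follow the Hahn--Banach strategy originally used by Fefferman and Stein in \cite{FS}, with Theorem \ref{thL1} and Theorem \ref{subspace} supplying the new ingredients adapted to the $Q$-space setting. The sufficiency direction is a routine consequence of the boundedness of Riesz transforms on $Q_\alpha(\mathbb{R}^n)$, while the necessity direction reduces to extending a bounded linear functional from a subspace of a product of $L^{1,\alpha}$ spaces.

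For the sufficiency, suppose $f_i \in Q_\alpha(\mathbb{R}^n)\cap L^\infty(\mathbb{R}^n)$ for $i=0,1,\dots,n$. Each Riesz transform $R_i$ belongs to $CZO(N_0)$, so by Proposition \ref{pro:CZ}(ii) it is bounded on $P^\alpha(\mathbb{R}^n)$; dualizing via Proposition \ref{pro:dual}, its adjoint is bounded on $Q_\alpha(\mathbb{R}^n)$, and hence $R_i$ itself maps $Q_\alpha$ to $Q_\alpha$. It follows immediately that $f=\sum_{i=0}^n R_i f_i$ lies in $Q_\alpha(\mathbb{R}^n)$.

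For the necessity, fix $f\in Q_\alpha(\mathbb{R}^n)=(P^\alpha(\mathbb{R}^n))'$ and define
$$T:P^\alpha(\mathbb{R}^n)\longrightarrow \bigl(L^{1,\alpha}(\mathbb{R}^n)\bigr)^{n+1},\qquad T(g)=(R_0 g,R_1 g,\dots,R_n g),$$
where the target is normed by the sum of the $L^{1,\alpha}$-norms. Theorem \ref{thL1} guarantees that $T$ is an injective bounded map whose image $V=T(P^\alpha)$, in the inherited norm, is isomorphic to $P^\alpha$. Consequently,
$$\ell_0:V\to\mathbb{C},\qquad \ell_0\bigl(T(g)\bigr)=\langle g,f\rangle,$$
is a well-defined continuous linear functional on $V$ with $\|\ell_0\|\leq C\|f\|_{Q_\alpha}$. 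By Hahn--Banach, extend $\ell_0$ to a continuous linear functional $\tilde\ell$ on all of $(L^{1,\alpha})^{n+1}$ without increasing the norm. Combining Theorem \ref{subspace}(ii) and (iii) gives $(L^{1,\alpha})'=L^{\infty,\alpha}=Q_\alpha\cap L^\infty$, so there exist $h_0,\dots,h_n\in Q_\alpha(\mathbb{R}^n)\cap L^\infty(\mathbb{R}^n)$ with
$$\tilde\ell(u_0,\dots,u_n)=\sum_{i=0}^n\langle u_i,h_i\rangle.$$
Specializing $u_i=R_i g$ for $g\in P^\alpha$ yields
$$\langle g,f\rangle=\sum_{i=0}^n\langle R_i g,h_i\rangle=\sum_{i=0}^n\langle g,R_i^{*}h_i\rangle,$$
so $f=\sum_{i=0}^n R_i^{*}h_i$ in $(P^\alpha)'$. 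Since $R_0^{*}=R_0$ is the identity and $R_i^{*}=-R_i$ for $i\geq 1$, setting $f_0=h_0$ and $f_i=-h_i$ for $1\leq i\leq n$ produces the required decomposition $f=\sum_{i=0}^n R_i f_i$ with each $f_i\in Q_\alpha(\mathbb{R}^n)\cap L^\infty(\mathbb{R}^n)$.

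The chief obstacle is verifying cleanly that the framework really permits the Hahn--Banach argument: specifically, that the norm equivalence between $\|g\|_{P^\alpha}$ and $\sum_i\|R_i g\|_{L^{1,\alpha}}$ (this is exactly the content of Theorem \ref{thL1}, and is the deep analytic input of the whole paper, deferred to Section 6), and that the adjoint identity $\langle R_i g,h_i\rangle=\langle g,R_i^{*}h_i\rangle$ is valid for $g\in P^\alpha$ and $h_i\in L^{\infty,\alpha}$. The latter rests on the $CZO$ boundedness results in Proposition \ref{pro:CZ} and the pairing coming from Proposition \ref{pro:dual}, so it reduces to extending the $L^2$ adjoint identity by density through the atomic/wavelet-atomic decomposition of $P^\alpha$. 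Once these ingredients are secured, Theorem \ref{th:neq} confirms that the resulting decomposition is strictly more informative than a decomposition in $Q_\alpha$ alone, thereby settling Problem \ref{p1}.
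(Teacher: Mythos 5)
Your proposal is correct and follows essentially the same route as the paper: both are the Fefferman--Stein Hahn--Banach argument, viewing $g\mapsto(R_0g,\dots,R_ng)$ as an isometric-up-to-constants embedding of $P^\alpha$ into $(L^{1,\alpha})^{n+1}$ (via Theorem \ref{thL1}), extending the functional $g\mapsto\langle g,f\rangle$ from the image to the full direct sum, and then reading off the representing functions in $L^{\infty,\alpha}=Q_\alpha\cap L^\infty$ from Theorem \ref{subspace}. The only cosmetic difference is that you invoke Hahn--Banach and the adjoint relation $R_i^*=-R_i$ explicitly, whereas the paper phrases the same steps through the closed subspace $S\subset B$ and absorbs the sign inside the integration-by-parts identity.
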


\begin{proof} By the continuity of the Calder\'on-Zygmund operators on $Q$-spaces, we know that if  $f_{i}(x)\in Q_{\alpha}(\mathbb{R}^{n})\bigcap
L^{\infty}(\mathbb{R}^{n})$, then
$$\sum\limits_{0\leq i\leq n}R_{i}f_{i}(x) \in
Q_{\alpha}(\mathbb{R}^{n}).$$

Now we prove the converse result. Let
$$B=\Big\{(g_{0},g_{1},\cdots,g_{n}): g_{i}\in L^{1,\alpha}(\mathbb{R}^{n}),
i=0,\cdots,n\Big\}.$$
The norm of $B$ is defined as $$\|(g_{0},g_{1},\cdots,g_{n})\|_{B}=
\sum\limits^{n}_{i=0}\|g_{i}\|_{L^{1,\alpha}}.$$
 We define
$$S=\Big\{(g_{0},g_{1},\cdots,g_{n})\in B: g_{i}=R_{i}g_{0}, i=0, 1,\cdots, n\Big\}.$$
 $S$ is a
closed subset of $B$. By theorem \ref{thL1}, $g_{0}\rightarrow
(g_{0},R_{1}g_{0},\cdots,R_{n}g_{0})$ define a norm preserving map
from $P^{\alpha}(\mathbb{R}^{n})$ to $S$. Hence the set of continuous linear functional on
$P^{\alpha}(\mathbb{R}^{n})$ is equivalent to the set of continuous linear functionals on
$S$.  By Hahn-Banach theorem,
which can extend to a continuous linear functionals on $B$ preserving the same norm.
We know that the dual space of $L^{1,\alpha}(\mathbb{R}^{n})\oplus \cdots\oplus
L^{1,\alpha}(\mathbb{R}^{n})$ is $L^{\infty,\alpha}(\mathbb{R}^{n})\oplus \cdots\oplus
L^{\infty,\alpha}(\mathbb{R}^{n})$.

$\forall f\in Q_{\alpha}(\mathbb{R}^{n})$, $f$ define a continuous linear functional
$l$ on  $P^{\alpha}(\mathbb{R}^{n})$ and also on $S$. Hence there exist
$\tilde{f}_{i}\in L^{\infty,\alpha}(\mathbb{R}^{n}), i=0,1,\cdots,n,$ such that for any $g_{0}\in P^{\alpha}(\mathbb{R}^{n})$,
\begin{eqnarray*}
 l(f)& = &\int_{\mathbb{R}^{n}}f(x) g_{0}(x) dx \\
 &=&\int_{\mathbb{R}^{n}}\tilde{f}_{0}(x) g_{0}(x) dx
+\sum\limits^{n}_{i=1} \int_{\mathbb{R}^{n}}
\tilde{f}_{i}(x) R_{i}g_{0}(x) dx\\
&=&\int_{\mathbb{R}^{n}}\tilde{f}_{0}(x) g_{0}(x) dx
-\sum\limits^{n}_{i=1} \int_{\mathbb{R}^{n}} R_{i}(\tilde{f}_{i})(x)
g_{0}(x) dx.
\end{eqnarray*}

Hence $f(x)= \tilde{f}_{0}(x) -\sum\limits^{n}_{i=1}
R_{i}(\tilde{f}_{i})(x)$.
\end{proof}

Triebel-Lizorkin spaces $\dot{F}^{0,q}_{\infty}$ are introduced in \cite{Tr},  Besov-Morrey spaces and Triebel-Lizorkin-Morrey spaces are introduced in
\cite{YSY}. These spaces play an important role in Harmonic analysis and non-linear problem, \cite{LXY} etc. Since Fefferman-Stein decomposition of BMO plays an important role in harmonic analysis, we propose the following open problems:
\begin{remark}
(1) In dimension $n$, how to give Fefferman-Stein decomposition for Triebel-Lizorkin spaces $\dot{F}^{0,q}_{\infty}$? 

(2) More generally, for other Besov-Morrey spaces or Triebel-Lizorkin-Morrey spaces, whether there is also Fefferman-Stein decomposition?
\end{remark}

\section{The proof of Theorem \ref{thL1}}
In \cite{FS}, C. Fefferman and E. M. Stein  used the Riesz transformations to
characterize Hardy space $H^{1}(\mathbb{R}^{n})$ in terms of the $L^{1}$ norm.
In this section, we prove the similar result
Theorem \ref{thL1} for the cases $0<\alpha<\frac{n}{2}$.

First, we prove that
\begin{equation}\label{sufficient}
g(x)\in
P^{\alpha}(\mathbb{R}^{n})\Longrightarrow g(x) \mbox{ satisfies the condition (\ref {Riesz.c}).}
% R_{i}g(x) \in L^{1,\alpha}(\mathbb{R}^{n}),\ i=0, 1,\cdots,n.
\end{equation}

By (ii) of Proposition \ref{pro:CZ}, Riesz transforms are bounded on $P^{\alpha}(\mathbb{R}^{n})$. The fact $g(x)\in
P^{\alpha}(\mathbb{R}^{n})$ implies that
$$R_{i}g(x) \in
P^{\alpha}(\mathbb{R}^{n}),\ i=1,\cdots,n.$$
 By (i) of Theorem \ref{subspace},
we have
$$ g(x) \mbox{ satisfies the condition (\ref {Riesz.c}).}
%R_{i}g(x)\in L^{1,\alpha}(\mathbb{R}^{n}),\ i=0,1, \cdots, n.
$$

The proof of the converse of (\ref{sufficient}) is cumbersome. We will prove it in the subsection 6.2. Here, we prove first a lemma in the next subsection 6.1.

\subsection{A lemma}
We first prove the following lemma.
\begin{lemma}\label{le:1t}
For $g(x)=\sum\limits_{(\epsilon,j,k)\in \Lambda_{n}}
g^{\epsilon}_{j,k} \Phi^{\epsilon}_{j,k}(x)$ and arbitrary $j\in \mathbb{Z}$, denote
$g_{j}(x)=\sum\limits_{\epsilon\in E_n,k\in \mathbb{Z}^{n}}
g^{\epsilon}_{j,k}\Phi^{\epsilon}_{j,k}(x)$
and denote $\tilde{g}_j(x)= \sum\limits_{j'\leq j} g_{j'}(x)$.
For $0<\alpha<\frac{n}{2}$, we have
\begin{itemize}
\item[(i)] $\|g_{j}\|_{H^{1}}\leq C\|g\|_{L^{1}}$.

\item[(ii)] $\max\Big\{\|\tilde{g}_{j}\|_{P^{\alpha}},\ \|g-\tilde{g}_{j}\|_{P^{\alpha}}\Big\}
\leq \|g\|_{P^{\alpha}}\leq
\|\tilde{g}_{j}\|_{P^{\alpha}}+ \|g-\tilde{g}_{j}\|_{P^{\alpha}}$.

\item[(iii)]  $\|g_{j}\|_{P^{\alpha}}\leq C\|g\|_{L^{1,\alpha}}$.\end{itemize}
\end{lemma}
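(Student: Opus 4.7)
The plan is to handle (i) and (ii) first as single-scale facts about the wavelet Littlewood--Paley blocks, and then deduce (iii) by routing through either block according to where the scale $j$ sits relative to the decomposition threshold.

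For (i), the natural tool is the wavelet characterization of $H^1$ from Proposition \ref{prop1}(ii) applied to the single-scale sum $g_j$. Because every index in the square function is frozen at scale $j$, the cutoffs $\chi(2^jx-k)$ have pairwise disjoint supports, so the square function collapses to $\sum_{k}2^{nj/2}\bigl(\sum_\epsilon|g^\epsilon_{j,k}|^2\bigr)^{1/2}\chi(2^jx-k)$, whose $L^1$ norm is (up to constants) $2^{-nj/2}\sum_k\bigl(\sum_\epsilon|g^\epsilon_{j,k}|^2\bigr)^{1/2}$. Estimating each coefficient by $|g^\epsilon_{j,k}|\leq C\,2^{nj/2}\int_{\mathrm{supp}\,\Phi^\epsilon_{j,k}}|g|\,dx$ and exploiting the bounded overlap of wavelet supports at a fixed scale, this sum is dominated by $\|g\|_{L^1}$.

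For (ii), the right-hand inequality is just the triangle inequality applied to $g=\tilde g_j+(g-\tilde g_j)$. The left-hand inequality is a duality argument via Proposition \ref{pro:dual}: given any $f\in Q^0_\alpha$ with $\|f\|_{Q^0_\alpha}\leq 1$, let $\tilde f_j$ be its wavelet truncation to scales $\leq j$. The wavelet characterization of $Q_\alpha$ in Proposition \ref{pro:1} shows $C_{\alpha,Q}\tilde f_j\leq C_{\alpha,Q}f$ term-by-term, hence $\|\tilde f_j\|_{Q_\alpha}\leq 1$. The two limit conditions defining $Q^0_\alpha$ survive: the large-cube limit is dominated by that of $f$, and for small cubes with $|Q|<2^{-nj}$ there is no dyadic sub-cube at scales $\leq j$ inside $Q$, so $C_{\alpha,Q}\tilde f_j=0$ outright. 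Thus $\tilde f_j\in Q^0_\alpha$ with norm $\leq 1$, and the identity $\langle\tilde g_j,f\rangle=\langle g,\tilde f_j\rangle$ gives $\|\tilde g_j\|_{P^\alpha}\leq\|g\|_{P^\alpha}$; the estimate for $g-\tilde g_j$ is symmetric, using truncation to scales $>j$.

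For (iii), I would realize $g_j$ as a single scale sitting inside one of the two pieces of the $L^{1,\alpha}$-decomposition. For any $s\in\mathbb{Z}$ and $N\in\mathbb{N}$ with $s-N\leq j\leq s$ and any $0\leq t\leq N$, either $j\geq s-t$, in which case $g_j$ is one of the scales making up $T^1_{s,t,N}g$ and two applications of (ii) (truncating both above and below scale $j$) give $\|g_j\|_{P^\alpha}\leq C\|T^1_{s,t,N}g\|_{P^\alpha}$; or $j<s-t$, in which case $g_j$ lies inside $T^2_{s,t,N}g$ and part (i) combined with the embedding $H^1\subset P^\alpha$ of Proposition \ref{php} gives $\|g_j\|_{P^\alpha}\leq C\|g_j\|_{H^1}\leq C\|T^2_{s,t,N}g\|_{L^1}$. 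In either alternative, $\|g_j\|_{P^\alpha}\leq C\bigl(\|T^1_{s,t,N}g\|_{P^\alpha}+\|T^2_{s,t,N}g\|_{L^1}\bigr)$; taking the infimum over $t$ and the supremum over admissible $(s,N)$ yields $\|g_j\|_{P^\alpha}\leq C\|g\|_{L^{1,\alpha}}$.

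The main technical obstacle I anticipate lies in (ii): verifying that wavelet truncation preserves not merely the $Q_\alpha$-seminorm but also the small-cube vanishing condition defining $Q^0_\alpha$. The saving grace is that on sufficiently small cubes the truncated wavelet expansion vanishes identically because there is no admissible dyadic sub-cube at the required coarse scale; this is where the discrete dyadic structure of the wavelet basis does real work. Once this is in place, (iii) reduces to the mechanical case analysis above.
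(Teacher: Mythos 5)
Your proof is correct. Parts (i) and (iii) follow essentially the same path as the paper: (i) uses the wavelet square-function characterization of $H^1$ and disjointness of single-scale supports, and (iii) splits according to whether the scale $j$ lands in the $T^1$ or $T^2$ piece of the $L^{1,\alpha}$-decomposition and invokes (ii) or (i) accordingly (the concluding ``supremum over $(s,N)$'' is redundant --- one admissible pair suffices since $\inf_t(\cdots)\le\|g\|_{L^{1,\alpha}}$ already --- but this is harmless). Part (ii) is where you genuinely diverge from the paper. The paper first restricts test functions to the sign-aligned set $F_g$, passes to the absolute-value sequences $\underline{f},\underline{g}$, establishes $\|g\|_{P^\alpha}=\sup\tau_{\underline f,\underline g}$ over non-negative tests via Proposition~\ref{pro:1}, and then concludes because dropping terms from a sum of non-negative products can only lower the supremum; positivity carries the argument. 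You instead leave the test function $f$ alone, pass the truncation across the pairing via $\langle\tilde g_j,f\rangle=\langle g,\tilde f_j\rangle$, and verify directly that $\tilde f_j$ remains in the unit ball of $Q^0_\alpha$ --- the $Q_\alpha$-seminorm bound is immediate from $C_{\alpha,Q}\tilde f_j\le C_{\alpha,Q}f$, and your observation that $C_{\alpha,Q}\tilde f_j$ vanishes identically on dyadic cubes of side smaller than $2^{-j}$ is exactly what is needed to preserve the small-cube limit condition defining $Q^0_\alpha$. Both proofs hinge on the same underlying structural fact, that the $Q_\alpha$ wavelet characterization depends only on $|f^\epsilon_{j,k}|$, but they deploy it differently: the paper symmetrizes to non-negative coefficients and uses monotonicity of the pairing, while you keep the coefficients as they are and show the truncation operator is a contraction on the unit ball of $Q^0_\alpha$. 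Your version is arguably more transparent about which property of $Q^0_\alpha$ does the work, and it isolates the one point where the argument could fail (the small-cube condition), which the paper's formulation leaves implicit inside (\ref{eq:eq}).
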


\begin{proof}
(i) By applying wavelet characterization of Hardy spaces and the orthogonality
properties of the Meyer wavelets, we have
\begin{eqnarray*}
\|g_{j}\|_{H^{1}}&\leq &C \Big\|\Big(\sum\limits_{\epsilon\in E_{n},k\in \mathbb{Z}^{n}} 2^{nj}|
\langle g_{j}, \Phi^{\epsilon}_{j,k}\rangle |^{2}\chi(2^{j}\cdot-k)\Big)^{\frac{1}{2}}\Big\|_{L^{1}}\\
&\leq & C \sum\limits_{\epsilon\in E_{n} } \Big\|\sum\limits_{k\in \mathbb{Z}^{n}} 2^{\frac{n}{2}j}|
\langle g, \Phi^{\epsilon}_{j,k}\rangle |\chi(2^{j}\cdot-k)\Big\|_{L^{1}}\\
&\leq &C\|g\|_{L^{1}}.
\end{eqnarray*}

(ii) $P^{\alpha}(\mathbb{R}^{n})$ is a Banach space, hence we have
$$\|g\|_{P^{\alpha}}\leq
\|\tilde{g}_{j}\|_{P^{\alpha}}+ \|g-\tilde{g}_{j}\|_{P^{\alpha}}.$$

To prove the first inequality of (ii), denote
$$G_{g}=\Big\{(\epsilon,j,k)\in \Lambda_{n}, g^{\epsilon}_{j, k}\neq 0\Big\}.$$
For
$f(x)= \sum\limits_{(\epsilon,j,k)\in \Lambda_{n}} f^{\epsilon}_{j,k}
\Phi^{\epsilon}_{j,k}(x)$, define
$$f^{\epsilon,g}_{j,k}=\left\{\begin{array}{cl}
|f^{\epsilon}_{j,k}|\cdot
|g^{\epsilon}_{j,k}|^{-1}  \overline{g^{\epsilon}_{j,k}},
&(\epsilon,j,k)\in G_{g};\\
0, & (\epsilon,j,k)\notin G_{g}.
\end{array}\right.$$
We denote by $F_{g}$ the set
$$\Big\{f :\ f(x)=
\sum\limits_{(\epsilon,j,k)\in G_{g}} f^{\epsilon,g}_{j,k}
\Phi^{\epsilon}_{j,k}(x) \text{ and } \|f\|_{Q^0_{\alpha}}\leq 1\Big\}.$$

Define
$$\underline{f}(x)=\sum\limits_{(\epsilon,j,k)\in \Lambda_{n}} |f^{\epsilon}_{j,k}|
\Phi^{\epsilon}_{j,k}(x)$$
and
$$\underline{g}(x)=\sum\limits_{(\epsilon,j,k)\in \Lambda_{n}} |g^{\epsilon}_{j,k}|
\Phi^{\epsilon}_{j,k}(x).$$

By (ii) of Proposition \ref{pro:1}, we have
\begin{equation}\label{eq:eq}
\sup\limits_{\|f\|_{Q^0_{\alpha}\leq 1}} \tau_{f,g }
=  \sup\limits_{f\in F_g} \tau_{f,g }
=  \sup\limits_{\underline{f}\in F_g } \tau_{\underline{f},\underline{g} }
=  \sup\limits_{ \|\underline{f} \|_{Q^0_{\alpha}\leq 1} } \tau_{\underline{f},\underline{g} }.
\end{equation}

Hence  we have
$$\max\Big\{\|\tilde{g}_{j}\|_{P^{\alpha}},\ \|g-\tilde{g}_{j}\|_{P^{\alpha}}\Big\}
\leq \|g\|_{P^{\alpha}}.$$

(iii) By the definition of the norm of $g(x)$ in $L^{1,\alpha}(\mathbb{R}^{n})$, for any
$s\in \mathbb{Z}$, $N\in \mathbb{N}$ and $s-N\leq j\leq s$, there exists $j_0$ such that $0\leq j_0\leq N$ and
$$\Big\|\sum\limits_{s-j_{0}\leq j'\leq s} g_{j'}\Big\|_{P^{\alpha}}
+\Big\|\sum\limits_{s-N\leq  j'< s-j_{0}} g_{j'}\Big\|_{L^{1}}\leq
\|g\|_{L^{1,\alpha}}.$$
If $j<s-j_{0}$, we apply  (i) of this lemma to get the desired assertion.
If $j\geq s-j_{0}$, we apply  (ii) of this lemma to get the desired assertion.
\end{proof}

\subsection{The proof of converse part }
For the proof of the converse  of (\ref{sufficient}),
it is sufficient to prove that
$\forall s_{1}\in \mathbb{Z}$, $N_{1}\geq 1$ and
$g_{s_{1},N_{1}}(x)=P_{s_{1},N_{1}}g(x)$  defined in
 (\ref{sN}), we have
\begin{equation}\label {eq:5.3}
\|g_{s_{1},N_{1}}\|_{ P^{\alpha}} \leq C C_{\alpha,Riesz}(g_{s_{1},N_{1}})
%\sum\limits_{i=0,\cdots,n} \|R_{i}g_{s_{1},N_{1}}\|_{ L^{1,\alpha}}
.\end{equation}
Owing to (\ref{5.0}), there exists
$\{g^{\epsilon,i}_{j,k}\}_{(\epsilon,j,k)\in \Lambda_n}$ such that for $ i=1,2,\cdots,n$,
\begin{equation}\label {eq:5.4}
R_{i}g_{s_{1},N_{1}}(x) =
\sum\limits_{(\epsilon,j,k)\in \Lambda_n, s_{1}-N_{1}-1\leq j
\leq s_{1}+1} g^{\epsilon,i}_{j,k}
\Phi^{\epsilon}_{j,k}(x).
\end{equation}

Due to (\ref{eq:5.4}), to estimate the $L^{1,\alpha}-$norm of
$$R_{i}g_{s_{1},N_{1}}(x), i=0,1,\cdots, n,$$
it is sufficient
to consider $s=s_{1}+1$ and $N=N_{1}+2$. For such $s$ and $N$,
there exist  $t_{s,N}^{0}$ and $t_{s,N}^{1}$ such that
\begin{equation}\label{eq:5.5}
\begin{array}{rl}
&\|T^{1}_{s,t_{s,N}^{0},N}g_{s_{1},N_{1}}\|_{ P^{\alpha}}
+\|T^{2}_{s,t_{s,N}^{0},N}g_{s_{1},N_{1}}\|_{ L^{1}}\\
=& \min\limits_{0\leq t\leq N}
\Big(\|T^{1}_{s,t,N}g_{s_{1},N_{1}}\|_{ P^{\alpha}}
+\|T^{2}_{s,t,N}g_{s_{1},N_{1}}\|_{ L^{1}}\Big);
\end{array}
\end{equation}
\begin{equation}\label{eq:5.6}
\begin{array}{rl}
&\sum\limits_{1\leq i\leq n}\{\|  T^{1}_{s,t_{s,N}^{1},N}R_{i}g_{s_{1},N_{1}}\|_{ P^{\alpha}}
+\| T^{2}_{s,t_{s,N}^{1},N}R_{i}g_{s_{1},N_{1}}\|_{ L^{1}}\}\\
=& \min\limits_{0\leq t\leq N}
\sum\limits_{1\leq i\leq n}\{\|R_{i} T^{1}_{s,t,N} g_{s_{1},N_{1}}\|_{ P^{\alpha}}
+\|R_{i} T^{2}_{s,t,N} g_{s_{1},N_{1}}\|_{ L^{1}}\}.
\end{array}
\end{equation}

We divide the proof into three cases.

{\bf Case 1: \ $t^{0}_{s,N}=t^{1}_{s,N}.$}
Let $Q_{j}$ be the projection operators defined by (\ref{Qj}).
We divide the function $g_{s_1,N_1}$ into two functions
$$g_{s_1,N_1}(x)=g^{1}_{s,N}(x)+g^{2}_{s,N}(x),$$
where
$$g^{1}_{s,N}(x)=\sum\limits_{j\geq
s-t^{0}_{s,N}} Q_{j}g_{s_{1},N_{1}}(x)$$
 and
$$g^{2}_{s,N}(x)=\sum\limits_{j< s-t^{0}_{s,N}}
Q_{j}g_{s_{1},N_{1}}(x).$$

By (\ref{eq:5.5}), we have $g^{2}_{s,N}(x)\in
L^{1}(\mathbb{R}^{n})$. By Lemma \ref{le:1t},
\begin{equation} \label{eq:5.7}
Q_{s-t^0_{s,N}-1}g_{s_1,N_1}(x)+ Q_{s-t^0_{s,N}-2}g_{s_1,N_1}(x)\in H^{1}(\mathbb{R}^n)
\end{equation}
and
\begin{equation}\label{eq:5.10}
g^{2}_{s,N}(x)-\Big(Q_{s-t^{0}_{s,N}-1}g_{s_{1},N_{1}}(x)
+Q_{s-t^{0}_{s,N}-2}g_{s_{1},N_{1}}(x)\Big)\in L^{1}(\mathbb{R}^{n}).\end{equation}

Further, for $i=1,\cdots,n$, we have
\begin{eqnarray*}
&& T^{2}_{s, t^{1}_{s,N},N}R_{i}g_{s_{1},N_{1}}(x)\\
&=&
T^{2}_{s, t^{1}_{s,N},N}R_{i}\Big[g^{2}_{s,N}(x)+Q_{s-t^{0}_{s,N}}g_{s_{1},N_{1}}(x)\Big]\\
&=&T^{2}_{s,t^{1}_{s,N},N}R_{i}\Big[g^{2}_{s,N}(x)-\Big(Q_{s-t^{0}_{s,N}-1}g_{s_{1},N_{1}}(x)
+Q_{s-t^{0}_{s,N}-2}g_{s_{1},N_{1}}(x)\Big)\\
&&+\Big(Q_{s-t^{0}_{s,N}-1}g_{s_{1},N_{1}}(x)
+Q_{s-t^{0}_{s,N}-2}g_{s_{1},N_{1}}(x)\Big)+Q_{s-t^{0}_{s,N}}g_{s_{1},N_{1}}(x)\Big]\\
&=&R_{i}\Big[g^{2}_{s,N}(x)-\Big(Q_{s-t^{0}_{s,N}-1}g_{s_{1},N_{1}}(x)
+Q_{s-t^{0}_{s,N}-2}g_{s_{1},N_{1}}(x)\Big)\Big]\\
&&+T^{2}_{s,t^{1}_{s,N},N}R_{i}\Big[Q_{s-t^{0}_{s,N}-1}g_{s_{1},N_{1}}(x)
+Q_{s-t^{0}_{s,N}-2}g_{s_{1},N_{1}}(x)\Big]\\
&&+T^{2}_{s,t^{1}_{s,N},N}R_{i}Q_{s-t^{0}_{s,N}}g_{s_{1},N_{1}}(x).
\end{eqnarray*}
Hence, by the equation (\ref {eq:5.7}), for $i=1,\cdots,n$,
\begin{eqnarray*}
II^i(x)&=:& R_{i}\Big[g^{2}_{s,N}(x)-\Big(Q_{s-t^{0}_{s,N}-1}g_{s_{1},N_{1}}(x)
+Q_{s-t^{0}_{s,N}-2}g_{s_{1},N_{1}}(x)\Big)\Big]\\
&&+ T^{2}_{s,
t^{1}_{s,N},N}R_{i}Q_{s-t^{0}_{s,N}}g_{s_{1},N_{1}}(x)\in
L^{1}(\mathbb{R}^{n}).\end{eqnarray*}

By equation (\ref{5.0}), there exists $\{\tau^{\epsilon,i}_{j,k}\}_{(\epsilon,j,k)\in \Lambda_n}$ such that
\begin{eqnarray*}
I^i(x)&=:&R_{i}\Big[g^{2}_{s,N}(x)-\Big(Q_{s-t^{0}_{s,N}-1}g_{s_{1},N_{1}}(x)+Q_{s-t^{0}_{s,N}-2}g_{s_{1},N_{1}}(x)\Big)\Big]\\
&=&\sum\limits_{(\epsilon,j,k)\in \Lambda_n, j\leq s-t^{0}_{s,N}-2}
\tau^{\epsilon,i}_{j,k}\Phi^{\epsilon}_{j,k}(x)
\end{eqnarray*} and
$$R_{i}Q_{s-t^{0}_{s,N}}g_{s_{1},N_{1}}(x)=
\sum\limits_{(\epsilon,j,k)\in \Lambda_n, s-t^{0}_{s,N}-1\leq j\leq s-t^{0}_{s,N}+1}
\tau^{\epsilon,i}_{j,k}\Phi^{\epsilon}_{j,k}(x).$$
For arbitrary $L^{\infty}$ function
$$h(x)= \sum\limits_{(\epsilon,j,k)\in \Lambda_n}
h^{\epsilon}_{j,k}\Phi^{\epsilon}_{j,k}(x)$$ and $j_0\in \mathbb{Z}$, denote the operator
$$P_{j_0}h(x)=\sum\limits_{k\in \mathbb{Z}^n}
\langle h(x), \Phi^0_{j_0,k}(x)\rangle \Phi^0_{j_0,k}(x).$$
We can see that $P_{j_0}h(x)\in L^{\infty}(\mathbb{R}^{n})$. In fact, by the fact
$$|\langle h(x), \Phi^0_{j_0,k}(x)\rangle|\leq C2^{-\frac{nj_0}{2}},$$
we can get
\begin{eqnarray*}
|P_{j_0}h(x)| &\leq& C\sum\limits_{k\in \mathbb{Z}^n}
2^{-\frac{nj_0}{2}}|\Phi^0_{j_0,k}(x)|\\
&\leq& C\sum\limits_{k\in \mathbb{Z}^n}
|\Phi^0(2^{j_0}x-k)|\\
&\leq& C.
\end{eqnarray*}

Let
$$h_0(x)=: P_{s-t^{0}_{s,N}-2}h(x)= \sum\limits_{(\epsilon,j,k)\in \Lambda_n, j\leq s-t^{0}_{s,N}-2}
h^{\epsilon}_{j,k}\Phi^{\epsilon}_{j,k}(x).$$
Hence $h_0(x)\in L^{\infty}(\mathbb{R}^{n})$.
Further, because $II^{i}(x)\in L^{1}(\mathbb{R}^{n})$,
\begin{eqnarray*}
|\langle I^{i}, h\rangle|&=&|\langle I^{i}, h_{0}\rangle|\\
&=&|\langle II^{i}, h_{0}\rangle|\\
&\leq&\|II^{i}\|_{L^{1}}\|h_{0}\|_{\infty}.
\end{eqnarray*}
The last estimate implies that for $
i=1, \cdots, n,$ the functions
\begin{eqnarray*}
I^{i}(x)&=&R_{i}\Big[g^{2}_{s,N}(x)-\Big(Q_{s-t^{0}_{s,N}-1}g_{s_{1},N_{1}}(x)\\
&&+Q_{s-t^{0}_{s,N}-2}g_{s_{1},N_{1}}(x)\Big)\Big]\in L^{1}(\mathbb{R}^{n}).
\end{eqnarray*}
This fact and (\ref{eq:5.10}) imply that
$$g^{2}_{s,N}(x)-\Big(Q_{s-t^{0}_{s,N}-1}g_{s_{1},N_{1}}(x)
+Q_{s-t^{0}_{s,N}-2}g_{s_{1},N_{1}}(x)\Big)\in H^{1}(\mathbb{R}^{n}).$$
By (\ref{eq:5.7}), we get
$$g^{2}_{s,N}(x)\in H^{1}(\mathbb{R}^{n}).$$
Further, we have $g^{1}_{s,N}(x)\in P^{\alpha}(\mathbb{R}^{n})$. Applying (\ref{eq:5.5}),
we get $$g_{s_{1},N_{1}}(x)\in P^{\alpha}(\mathbb{R}^{n}).$$

{\bf Case 2: $t^{0}_{s,N}>t^{1}_{s,N}.$} For this case, we  decompose $g_{s_{1},N_{1}}(x)$ into
three functions
$$g_{s_{1},N_{1}}(x)=g^{1}_{s,N}(x)+g^{2}_{s,N}(x)+g^{3}_{s,N}(x), $$
where
$$g^{1}_{s,N}(x)=\sum\limits_{j\geq
s-t^{1}_{s,N}} Q_{j}g_{s_{1},N_{1}}(x),$$
$$g^{2}_{s,N}(x)=\sum\limits_{s-t^{0}_{s,N}\leq j< s-t^{1}_{s,N}}
Q_{j}g_{s_{1},N_{1}}(x)$$ and $$g^{3}_{s,N}(x)=\sum\limits_{j<
s-t^{0}_{s,N}} Q_{j}g_{s_{1},N_{1}}(x).$$ We know that
\begin{eqnarray*}
&&T^{2}_{s, t^{1}_{s,N},N}R_{i}g_{s_{1},N_{1}}(x)\\
&=&T^{2}_{s,
t^{1}_{s,N},N}R_{i}\Big[g^{3}_{s,N}(x)+g^{2}_{s,N}(x)+Q_{s-t^{1}_{s,N}}g_{s_{1},N_{1}}(x)\Big].\end{eqnarray*}
Then $\forall h(x)= \sum\limits_{\epsilon, s-N\leq
j<s-t^{0}_{s,N},k} h^{\epsilon}_{j,k}\Phi^{\epsilon}_{j,k}(x)$ and
$\|h\|_{L^{\infty}}\leq 1$, we know that
\begin{equation}\label{eq:5.8}
\langle T^{2}_{s,
t^{1}_{s,N},N}R_{i}g_{s_{1},N_{1}},\ h\rangle = \langle
R_{i}g^{3}_{s,N},\ h\rangle.
\end{equation}
By (\ref{eq:5.5}), $g^{3}_{s,N}(x)\in L^{1}(\mathbb{R}^{n})$. This fact implies that
\begin{equation}\label{eq:5.9}
Q_{s-t^{0}_{s,N}-1}g_{s_{1},N_{1}}(x)+Q_{s-t^{0}_{s,N}-2}g_{s_{1},N_{1}}(x)\in H^{1}(\mathbb{R}^{n}).
\end{equation}
Owing to (\ref{eq:5.8}) and (\ref{eq:5.9}), for $i=0,\cdots,n$, we have
$$R_{i}\Big[g^{3}_{s,N}(x)-\Big(Q_{s-t^{0}_{s,N}-1}g_{s_{1},N_{1}}(x)
+Q_{s-t^{0}_{s,N}-2}g_{s_{1},N_{1}}(x)\Big)\Big]\in L^{1}(\mathbb{R}^{n}).$$
 Hence we obtain
$$g^{3}_{s,N}(x)-\Big(Q_{s-t^{0}_{s,N}-1}g_{s_{1},N_{1}}(x)
+Q_{s-t^{0}_{s,N}-2}g_{s_{1},N_{1}}(x)\Big)\in H^{1}(\mathbb{R}^{n}).$$
So we have $g^{3}_{s,N}(x)\in H^{1}(\mathbb{R}^{n})$. Since
$$g^{1}_{s,N}(x)+g^{2}_{s,N}(x) \in P^{\alpha}(\mathbb{R}^{n}),$$ we
have $g_{s_{1},N_{1}}(x)\in P^{\alpha}(\mathbb{R}^{n})$.

{\bf Case 3: $t^{0}_{s,N}<t^{1}_{s,N}.$}
We  decompose
$g_{s_{1},N_{1}}(x)$ into three functions
$$g_{s_{1},N_{1}}(x)=g^{1}_{s,N}(x)+g^{2}_{s,N}(x)+g^{3}_{s,N}(x),$$
where $$g^{1}_{s,N}(x)=\sum\limits_{j\geq s-t^{0}_{s,N}}
Q_{j}g_{s_{1},N_{1}}(x),$$
$$g^{2}_{s,N}(x)=\sum\limits_{s-t^{1}_{s,N}\leq j< s-t^{0}_{s,N}}
Q_{j}g_{s_{1},N_{1}}(x)$$
 and $$g^{3}_{s,N}(x)=\sum\limits_{j<
s-t^{1}_{s,N}} Q_{j}g_{s_{1},N_{1}}(x).$$

 For $i=1,\cdots,n$, we know
that
\begin{eqnarray*}
&&T^{2}_{s, t^{1}_{s,N},N}R_{i}g_{s_{1},N_{1}}(x)\\&=&
T^{2}_{s, t^{1}_{s,N},N}R_{i}\Big[g^{3}_{s,N}(x)+Q_{s-t^{1}_{s,N}}g_{s_{1},N_{1}}(x)\Big]\\
&=&R_{i}\Big[g^{3}_{s,N}(x)-\Big(Q_{s-t^{1}_{s,N}-1}g_{s_{1},N_{1}}(x)
+Q_{s-t^{1}_{s,N}-2}g_{s_{1},N_{1}}(x)\Big)\Big]\\
&+& T^{2}_{s, t^{1}_{s,N},N}R_{i}\Big[Q_{s-t^{1}_{s,N}-1}g_{s_{1},N_{1}}(x)
+Q_{s-t^{1}_{s,N}-2}g_{s_{1},N_{1}}(x)\Big]\\
&+& T^{2}_{s,
t^{1}_{s,N},N}R_{i}Q_{s-t^{1}_{s,N}}g_{s_{1},N_{1}}(x).
\end{eqnarray*}
Define $h_{i}(x)$, $i=1,2,3,4,$ as
\begin{eqnarray*}
h_{1}(x)&=&
\sum\limits_{\epsilon, s-N\leq j<s-t^{1}_{s,N}-2,k}
h^{\epsilon,1}_{j,k}\Phi^{\epsilon}_{j,k}(x),\\
h_{2}(x)&=&
\sum\limits_{\epsilon, j=s-t^{1}_{s,N}-2,k}
h^{\epsilon,2}_{j,k}\Phi^{\epsilon}_{j,k}(x),\\
h_{3}(x)&=&
\sum\limits_{\epsilon, j=s-t^{1}_{s,N}-1,k}
h^{\epsilon, 3}_{j,k}\Phi^{\epsilon}_{j,k}(x),\\
h_{4}(x)&=&
\sum\limits_{\epsilon, j=s-t^{1}_{s,N},k}
h^{\epsilon, 4}_{j,k}\Phi^{\epsilon}_{j,k}(x),
\end{eqnarray*}
where the sequences $\{h^{\epsilon,i}_{j,k}\}, i=1, 2, 3, 4$, are four arbitrary sequences
satisfying  the condition $\|h_{i}\|_{L^{\infty}}\leq 1$.

 We consider
 $$\int T^{2}_{s,
t^{1}_{s,N},N}R_{i}g_{s_{1},N_{1}}(x) h_{i}(x)dx.$$
 By  (\ref{eq:5.6}) and the definition of
$t^{1}_{s,N}$, we have
\begin{equation}\label{eq:23}
g^{3}_{s,N}(x)+g^{2}_{s,N}(x)\in L^{1}(\mathbb{R}^{n}).
\end{equation}
Hence
\begin{equation}\label{eq:5.10}
g^{2}_{s,N}(x)-\Big(Q_{s-t^{1}_{s,N}-1}g_{s_{1},N_{1}}(x)
+Q_{s-t^{1}_{s,N}-2}g_{s_{1},N_{1}}(x)\Big)\in L^{1}
\end{equation}
and
\begin{equation}\label{eq:5.11}
Q_{s-t^{1}_{s,N}-i}g_{s_{1},N_{1}}(x)\in H^{1}(\mathbb{R}^{n}), i=0,1,2.
\end{equation}
Similar to case 1, by (\ref{5.0}),
the fact that
$$T^{2}_{s, t^{1}_{s,N},N}R_{i}g_{s_{1},N_{1}}(x)\in L^{1}(\mathbb{R}^{n}),
i=1,\cdots,n,$$ implies that for $ i=1,\cdots,n,$
$$R_{i}\Big[g^{3}_{s,N}(x)-\Big(Q_{s-t^{1}_{s,N}-1}g_{s_{1},N_{1}}(x)
+Q_{s-t^{1}_{s,N}-2}g_{s_{1},N_{1}}(x)\Big)\Big]\in
L^{1}(\mathbb{R}^{n}).$$
 Therefore we  have
$$g^{3}_{s,N}(x)-\Big(Q_{s-t^{1}_{s,N}-1}g_{s_{1},N_{1}}(x)
+Q_{s-t^{1}_{s,N}-2}g_{s_{1},N_{1}}(x)\Big)\in H^{1}(\mathbb{R}^{n}).$$ Hence
$g^{3}_{s,N}(x)\in H^{1}(\mathbb{R}^{n})$.

For $i=1,\cdots,n$, we have
$$T^{1}_{s, t^{1}_{s,N},N}R_{i}g_{s_{1},N_{1}}(x)=
T^{1}_{s,
t^{1}_{s,N},N}R_{i}\Big[g^{1}_{s,N}(x)+g^{2}_{s,N}(x)+Q_{s-t^{1}_{s,N}-1}g_{s_{1},N_{1}}(x)\Big].$$
So the conditions
$$T^{1}_{s,
t^{1}_{s,N},N}R_{i}\Big[g^{1}_{s,N}(x)+g^{2}_{s,N}(x)+Q_{s-t^{1}_{s,N}-1}g_{s_{1},N_{1}}(x)\Big]
\in P^{\alpha}(\mathbb{R}^{n}), i=1,\cdots,n $$
 and $g^{1}_{s,N}(x)\in P^{\alpha}(\mathbb{R}^{n})$ implies
$$T^{1}_{s, t^{1}_{s,N},N}R_{i}\Big[g^{2}_{s,N}(x)+Q_{s-t^{1}_{s,N}-1}g_{s_{1},N_{1}}(x)\Big]
\in P^{\alpha}(\mathbb{R}^{n}).$$

For $i=1,\cdots,n$, we have
\begin{eqnarray*}
&&T^{1}_{s, t^{1}_{s,N},N}R_{i}\Big[g^{2}_{s,N}(x)+Q_{s-t^{1}_{s,N}-1}g_{s_{1},N_{1}}(x)\Big]\\
&=&R_{i}\Big[g^{2}_{s,N}(x)-Q_{s-t^{1}_{s,N}-2}g_{s_{1},N_{1}}(x)\Big]\\
&&+T^{1}_{s,
t^{1}_{s,N},N}R_{i}\Big[Q_{s-t^{1}_{s,N}-2}g_{s_{1},N_{1}}(x)
+ Q_{s-t^{1}_{s,N}-1}g_{s_{1},N_{1}}(x)\Big].
\end{eqnarray*}
 Applying (\ref{eq:5.11}), we obtain
$$R_{i}\Big[g^{2}_{s,N}(x)-Q_{s-t^{1}_{s,N}-2}g_{s_{1},N_{1}}(x)\Big]\in
P^{\alpha}(\mathbb{R}^{n}).$$ Hence
$$g^{2}_{s,N}(x)-Q_{s-t^{1}_{s,N}-2}g_{s_{1},N_{1}}(x)\in P^{\alpha}(\mathbb{R}^{n}).$$
That is to say, $g_{s_{1},N_{1}}(x)$ satisfies the conditions
(\ref{eq:5.10}) and (\ref{eq:5.11}). By (\ref{eq:5.5}) and (\ref{eq:5.6}),
$$g^{1}_{s,N}(x)\in P^{\alpha}(\mathbb{R}^{n})$$
and
$$g^{3}_{s,N}(x)\in H^{1}(\mathbb{R}^{n}).$$
 Putting  together, we complete the proof of
(\ref{eq:5.3}).

\end{document}